\documentclass[a4paper]{amsart}
\usepackage{amsmath,amsthm,amssymb, amscd, amsfonts}
\usepackage[all]{xy}
\usepackage{leftidx}
\usepackage[latin1]{inputenc}
\usepackage{enumerate}

\usepackage[dvipsnames]{xcolor}

\theoremstyle{plain}

\newtheorem{theorem}{Theorem}[section]
\newtheorem{corollary}[theorem]{Corollary}

\newtheorem{proposition}[theorem]{Proposition}
\newtheorem{lemma}[theorem]{Lemma}

\theoremstyle{definition}
\newtheorem{definition}[theorem]{Definition}

\newtheorem{example}[theorem]{Example}

\theoremstyle{remark}
\newtheorem{remark}[theorem]{Remark}

\numberwithin{equation}{section}\theoremstyle{plain}

\newcommand{\ootimes}{\overline{\otimes}}

\renewcommand{\1}{\textbf{1}}

\newcommand{\A}{{\textbf A}}
\newcommand{\Bb}{{\textbf B}}

\newcommand{\B}{{\mathcal B}}
\newcommand{\C}{{\mathcal C}}
\newcommand{\D}{{\mathcal D}}

\newcommand{\Z}{{\mathcal Z}}

\newcommand{\M}{\mathcal{M}}
\newcommand{\N}{\mathcal{N}}

\newcommand{\toto}{\longrightarrow}

\newcommand{\E}{{\mathcal E}}
\newcommand{\U}{{\mathcal U}}

\newcommand{\Aa}{\mathbb A}

\newcommand{\Ss}{\mathcal S}

\newcommand{\Rep}{\operatorname{Rep}}

\newcommand{\K}{\mathcal{K}}
\newcommand{\KER}{\mathfrak{Ker}}
\newcommand{\modd}{\!\!-\!\operatorname{mod}}
\newcommand{\comod}{\operatorname{comod}\!-\!}

\newcommand\coend{\operatorname{coend}}
\newcommand\Aut{\operatorname{Aut}}
\newcommand\Irr{\operatorname{Irr}}
\newcommand\Fun{\operatorname{Fun}}

\newcommand\vect{\operatorname{Vec}}

\newcommand\id{\operatorname{id}}

\newcommand\co{\operatorname{co}}

\newcommand\op{\operatorname{op}}
\newcommand\Hom{\operatorname{Hom}}

\newcommand\dys{\operatorname{dys}}
\newcommand{\uno}{ \mathbf{1}}

\begin{document}
\title[Extensions of tensor categories by finite group fusion categories]{Extensions of tensor categories by finite group fusion categories}
\author{Sonia Natale}
\address{Facultad de Matem\'atica, Astronom\'\i a,  F\'\i sica y Computaci\' on.
Universidad Nacional de C\'ordoba. CIEM -- CONICET. Ciudad
Universitaria. (5000) C\'ordoba, Argentina}
\email{natale@famaf.unc.edu.ar
\newline \indent \emph{URL:}\/ http://www.famaf.unc.edu.ar/$\sim$natale}

\thanks{Partially supported by  CONICET and SeCYT--UNC}

\keywords{tensor category; exact sequence; matched pair; crossed action}

\subjclass[2010]{18D10; 16T05}

\date{\today}

\begin{abstract} We study exact sequences of finite tensor categories of the form $\Rep G \to \C \to \D$,  where $G$ is a finite group. We show that, under suitable assumptions, there exists a group $\Gamma$ and mutual actions by permutations $\rhd: \Gamma \times G \to G$ and  $\lhd: \Gamma \times G \to \Gamma$ that make $(G, \Gamma)$ into matched pair of groups endowed with a natural crossed action on $\D$ such that $\C$ is equivalent to a certain associated crossed extension $\D^{(G, \Gamma)}$ of $\D$. Dually, we show that an exact sequence of finite tensor categories $\vect_G \to \C \to \D$ induces an $\Aut(G)$-grading on $\C$ whose neutral homogeneous component is a $(Z(G), \Gamma)$-crossed extension of a tensor  subcategory of $\D$.  As an application we prove  that such extensions $\C$ of $\D$ are weakly group-theoretical fusion categories if and only if $\D$ is a weakly group-theoretical fusion category.
In particular, we conclude that every semisolvable semisimple Hopf algebra is weakly group-theoretical.
\end{abstract}

\maketitle

\section{Introduction}

Let $\C$, $\C'$, $\C''$ be tensor categories over a field $k$. Recall from  \cite{tensor-exact} that an exact sequence of tensor categories is a sequence of
tensor functors
\begin{equation}\label{ex-sq}\xymatrix{\C' \ar[r]^f & \C \ar[r]^F &
	\C''}
\end{equation}
where $F$ is dominant and normal and $f$ is a full embedding that induces an equivalence between $\C'$ and the tensor subcategory $\KER_F$ of $\C$ of all objects $X$ such that $F(X)$ is a trivial object of $\C''$. The precise meaning of these notions are recalled in Section \ref{section-exact}.

Every exact sequence \eqref{ex-sq} gives rise to a tensor functor from $\C'$ to the category $\vect$ of finite dimensional vector spaces and therefore, by Tannakian reconstruction arguments, it induces a Hopf algebra $H$ and a canonical equivalence of tensor categories $\C' \cong \comod H$, where $\comod H$ is the category of finite dimensional right comodules over $H$.

\medbreak We shall work over an algebraically
closed field $k$ of characteristic zero.  The main result of this paper is a classification result for exact sequences of finite tensor categories \eqref{ex-sq} whose induced Hopf algebra $H$ is finite dimensional and commutative: we call such an exact sequence an \emph{abelian exact sequence}. Equivalently, 
an abelian exact sequence is an exact sequence \eqref{ex-sq} such that $\C' = \Rep G$ is the tensor category of finite dimensional representations of a finite group $G$, and the induced tensor functor $\Rep G \to \vect$  is monoidally isomorphic to the forgetful functor. Some characterizations of abelian exact sequences are given in Section \ref{section-abelian}.

\medbreak
Examples of abelian exact sequences of tensor categories arise from equivariantization under the action of a finite group on a tensor category and also from (strict) Hopf algebra extensions of dual group algebras, that is, Hopf algebra extensions $k \toto k^G \toto H \toto H'' \toto k$, where $G$ is a finite group.

Further examples of abelian exact sequences were constructed in \cite{crossed-action} from crossed actions of matched pairs of finite groups on a tensor category, as follows. Recall that a \emph{matched pair of groups} is a pair $(G, \Gamma)$, where $G$ and $\Gamma$
are groups  endowed with  mutual  actions by permutations
$\Gamma \overset{\vartriangleleft}\longleftarrow \Gamma \times G
\overset{\vartriangleright}\longrightarrow G$ such that, for all $s, t \in \Gamma$,
$g, h \in G$,
\begin{equation}\label{matched}
s \vartriangleright gh  = (s \vartriangleright g) ((s
\vartriangleleft g) \vartriangleright h), \quad st
\vartriangleleft g  = (s \vartriangleleft (t \vartriangleright g))
(t \vartriangleleft g). \end{equation}

Given a matched pair  of finite groups $(G, \Gamma)$, a \emph{$(G, \Gamma)$-crossed action} on a tensor category $\D$ consists of a right action by $k$-linear autoequivalences $\rho: \underline{G}^{op} \to \Aut_k(\D)$ and a $\Gamma$-grading $\C = \bigoplus_{s  \in \Gamma} \D_s$ of $\D$ such that for all $g \in G$, $s \in \Gamma$, $\rho^g(\D_t) \subseteq \D_{s \lhd g}$, and there exist natural isomorphisms $\gamma^g_{X, Y}: \rho^g(X \otimes Y) \to \rho^{s\rhd g}(X) \otimes \rho^g(Y)$, $X \in \D$, $Y \in \D_s$, satisfying appropriate conditions. See Section \ref{section-crossed}.

A $(G, \Gamma)$-crossed action on a tensor category $\D$ gives rise to a tensor category $\D^{(G, \Gamma)}$ that fits into a canonical abelian exact sequence of tensor categories
\begin{equation}\label{sec-crossed} \Rep G \toto \D^{(G, \Gamma)} \toto \D.
\end{equation}
We call  $\D^{(G, \Gamma)}$ a \emph{crossed extension} of the neutral homogeneous component $\D_e$ by the matched pair $(G, \Gamma)$. 
It was shown in \cite{crossed-action} that the tensor categories of finite dimensional representations of the Kac algebras \cite{kac} associated to a matched pair $(G, \Gamma)$ arise through this construction from pointed fusion categories. 

\medbreak The first main result of the paper says that crossed extensions by matched pairs do in fact exhaust  the class of abelian exact sequences of finite tensor categories:

\begin{theorem}\label{main1} Let $G$ be a finite group and let $(\E): \; \Rep G \toto \C \toto \D$ be an abelian exact sequence of finite tensor categories. Then there exists a finite group $\Gamma$ endowed with mutual actions by permutations $\rhd: \Gamma \times G \to G$, $\lhd: \Gamma  \times G \to \Gamma$ and a $(G, \Gamma)$-crossed action on $\D$ such that $(\E)$ is equivalent to the exact sequence \eqref{sec-crossed}.
\end{theorem}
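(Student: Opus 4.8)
The plan is to realize $(\E)$ concretely through the regular function algebra of $G$ and then to extract both the matched pair $(G,\Gamma)$ and the crossed action from the way this algebra fails to be central in $\C$. Write $A=f(\mathcal O(G))$ for the image in $\C$ of the algebra of $k$-valued functions on $G$: it is a commutative separable algebra lying in $\KER_F\cong\Rep G$, equipped with its complete set of orthogonal primitive idempotents $\{e_g:g\in G\}$ and with the regular right $G$-action by translations, which is by algebra automorphisms. Using the characterizations of abelian exact sequences from Section \ref{section-abelian} together with the dominance and normality of $F$, I would first prove that, via an equivalence $\D\simeq\C_A:=\mathrm{Mod}_\C(A)$ of the base with the category of right $A$-modules in $\C$, the functor $F$ is identified with the free-module functor $X\mapsto X\otimes A$. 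This is the categorical de-equivariantization: dominance gives that every object of $\D\simeq\C_A$ is a subquotient of a free module, while normality is what guarantees that $\KER_F$ is exactly the full subcategory of $A$-modules supported on the trivial part, i.e.\ that it coincides with $\Rep G$.

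Next I would reconstruct the crossed-action data. The right translation action of $G$ on $A$ induces a right action $\rho:\underline G^{op}\to\Aut_k(\D)$ by $k$-linear autoequivalences of $\D\simeq\C_A$, namely restriction of scalars along $a\mapsto a\cdot g$; note $\rho^g$ is $k$-linear but not tensor, exactly as a crossed action requires. To produce $\Gamma$ and the grading, I would exploit that although the idempotents $e_g$ are permuted among themselves by $G$, the embedding $\Rep G\hookrightarrow\C$ is not central, so the objects of $\C$ transport these idempotents nontrivially. Analysing the canonical isomorphisms relating $X\otimes A$ and $A\otimes X$ supplied by normality, and reading them off on the $e_g$, I would assemble a group $L$ of permutations of the set $G$ generated by $G$ together with these transports, show that $L$ admits an exact factorization $L=\Gamma G$ with $\Gamma\cap G=\{e\}$, and recover the mutual actions $\rhd,\lhd$ from the straightening of products in $L$; this is precisely the matched-pair condition \eqref{matched}. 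The grading $\D=\bigoplus_{s\in\Gamma}\D_s$ is then the decomposition of $\C_A$ dictated by the $\Gamma$-orbit data, and $\rho^g(\D_s)\subseteq\D_{s\lhd g}$ holds by construction.

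The natural isomorphisms $\gamma^g_{X,Y}$ of the crossed action should arise from comparing the two ways of moving the module structure by $A$ past a tensor product $X\otimes Y$ with $Y\in\D_s$; their coherence (the hexagon- and pentagon-type axioms recalled in Section \ref{section-crossed}) and the remaining matched-pair identities should follow from associativity of the $A$-module structures and functoriality of the transports. Finally I would build a tensor functor $\C\to\D^{(G,\Gamma)}$ sending $X$ to its underlying object $F(X)$ together with the equivariant-and-graded data just produced, and check that it is an equivalence compatible with both exact sequences: it restricts to the identity on $\Rep G=\KER$ and intertwines $F$ with the canonical functor of \eqref{sec-crossed} to $\D$. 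Compatibility on the kernels and on the quotients, plus fully faithfulness, then forces it to be an equivalence of exact sequences; one can alternatively confirm essential surjectivity by a Frobenius--Perron dimension count, since $\FPdim\C=|G|\,\FPdim\D=\FPdim\D^{(G,\Gamma)}$.

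The load-bearing steps are the identification $\D\simeq\C_A$ with $F$ the free-module functor, which uses the dominance and normality hypotheses essentially, and — the genuine obstacle — the canonical extraction of $\Gamma$ and the grading from the non-central embedding. There one must verify that the transport permutations of the idempotents of $A$ are well defined and group-like, that they generate together with $G$ a group admitting an exact factorization $\Gamma G$, and that the resulting grading is simultaneously compatible with $\rho$ (through $\lhd$) and with the tensor product (through the isomorphisms $\gamma$). By contrast, once the matched pair and the grading are in place, I expect the coherence axioms for the crossed action and the final compatibility of the equivalence to reduce to routine, if lengthy, diagram chases.
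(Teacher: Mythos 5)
Your opening steps track the paper's proof: identifying $(\E)$ with $\langle A\rangle \toto \C \overset{F_A}\toto \C_A$ where $A$ is the (image of the) function algebra $k^G$, and letting $G\cong\Aut_\C(A)$ act on $\C_A\simeq\D$ by twisting right $A$-module structures along the translation automorphisms. But the step you yourself flag as ``the genuine obstacle'' is precisely where the proposal breaks down, and the mechanism you offer cannot work as stated. The idempotents $e_g$ of $A$ are not morphisms in $\C$: since $\Hom_\C(\1,A)\cong k$, the only maps $\1\to A$ are scalar multiples of the unit (the $e_g$ are not even morphisms in $\Rep G$, as they are not $G$-invariant). So ``reading off on the $e_g$'' the isomorphisms between $A\otimes X$ and $X\otimes A$ --- which, incidentally, are supplied by the half-braiding $\sigma$ of the induced central algebra, not by normality --- has no categorical meaning, and there is no evident way to define your transport permutations, let alone to show they generate a group $L$ admitting an exact factorization $\Gamma G$. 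The paper's resolution is to move the whole discussion into $\C_A$, where the relevant ``points'' become visible: $A\otimes A=F_A(A)$ is a \emph{trivial} object of $\C_A$, i.e.\ a direct sum of copies of the unit object $A$, so its idempotents and automorphisms are honest morphisms; one then recovers $G$ as $\Alg_A(A\otimes A, A)\cong\Aut_\C(A)$ (Lemma \ref{gp-iso}), carves $\Gamma$ out of $\Aut_A(A\otimes A)$, and defines the grading by comparing the half-braiding on $A\otimes A$ with the trivial one (Proposition \ref{gd-tilde}). Your proposal contains no substitute for this passage.

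There is a second, related gap: your construction never uses the abelian hypothesis beyond writing $A=k^G$. In the paper, the object from which $\Gamma$, the grading and the action $\rhd$ are all extracted is the pair $\Bb=(A\otimes A,\tilde c)\in\Z(\C_A)$, and the very existence of the half-braiding $\tilde c$ is where abelian-ness enters: by Proposition \ref{char-sym}, $(\E)$ is abelian if and only if $(A,\sigma)$ is symmetric, equivalently $A\otimes A$ is a dyslectic $A$-module, and only then does $c_{A\otimes A,-}$ descend along $\otimes_A$ to a half-braiding in $\Z(\C_A)$ (Proposition \ref{def-b}). An argument that does not invoke this (or an equivalent input) would apply verbatim to non-abelian exact sequences $\Rep G\toto\C\toto\D$ --- those induced by twisted fiber functors on $\Rep G$ --- where the conclusion fails, since every crossed extension \eqref{sec-crossed} has induced Hopf algebra $k^G$. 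Finally, the closing claim that the coherence and matched-pair identities reduce to ``routine diagram chases'' underestimates the content: the identity $t\rhd gh=(t\rhd g)\,((t\lhd g)\rhd h)$ rests on the compatibility $\mu_Y(\id_Y\otimes\alpha_g)\sigma_Y=\mu_Y\sigma_Y(\alpha_{t\rhd g}\otimes\id_Y)$ for homogeneous $Y$ (Proposition \ref{a-s}) together with a faithfulness statement for the twisted module structures (Lemma \ref{ff-rho}); these are the technical heart of the paper's proof of Theorem \ref{main-ca}, not consequences of associativity and functoriality alone.
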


Theorem \ref{main1} is proved in Section \ref{ab-ext} (see Subsection \ref{proof-main1}). 
The proof relies on the fact that the exact sequence $(\E)$ induces a commutative algebra $(A, \sigma)$ of the Drinfeld center $\Z(\C)$ such that $\Hom_\C(\1, A) \cong k$ and $A \otimes A$ is isomorphic to a direct sum of copies of $A$ as a right $A$-module in $\C$.
Furthermore, $(A, \sigma)$ gives rise to a canonical exact sequence
\begin{equation*}\xymatrix{\langle A \rangle \ar[r] & \C \ar[r]^{F_A} &
	\C_A},
\end{equation*}
which is equivalent to  $(\E)$. Here, $\langle A \rangle$ is the tensor subcategory of $\C$ generated by $A$ and $\C_A$ is the tensor category of right $A$-modules in $\C$. See \cite[Section 6]{tensor-exact}, Subsection \ref{ss-induced}.

The assumption that $(\E)$ is abelian allows, on the one hand, to identify the group $G$ with the group $\Aut_\C(A)$ of algebra automorphisms of $A$ in $\C$. This gives rise to a natural right action by $k$-linear autoequivalences of $G$ on $\C_A$ (Subsection \ref{ss-action}).

On the other hand, the assumption allows to provide the trivial right $A$-module $A \otimes A$ with a canonical half-braiding $\tilde c: A \otimes A \otimes_A - \to - \otimes_A A \otimes A$ in such a way that $(A \otimes A, \tilde c)$ becomes a commutative algebra of the Drinfeld center $\Z(\C_A)$.  This entails in turn a faithful grading of $\C_A$ by a subgroup $\Gamma$ of the group $\Aut_A(A \otimes A)$ of algebra automorphisms of $A\otimes A$ in $\C_A$ (Subsection \ref{ss-grading}).

The relevant permutation actions $\rhd$ and $\lhd$ arise, respectively, from an identification of $G$ with the set $\Hom_A(A \otimes A, A)$ of algebra morphisms $A \otimes A \to A$ in $\C_A$, and from a canonical isomorphism, for all $g \in G$, $\rho^g(X \otimes_A Y) \cong \rho^{s\rhd g}(X) \otimes_A \rho^g(Y)$, for every object $X \in \C_A$ and for every homogeneous object $Y \in \C_A$ of degree $s \in \Gamma$. See Subsections \ref{def-rhd}, \ref{def-lhd}.

\medbreak Another main result of this paper is the following theorem. For a finite group $G$, let $\vect_G$ be the fusion category of finite dimensional $G$-graded vector spaces. Let also $Z(G)$ be the center of $G$.

\begin{theorem}\label{main2} Let $G$ be a finite group and let $\vect_G \toto \C \toto \D$ be an exact sequence of finite tensor categories. Then there exist a finite group $\Gamma$ endowed with mutual actions by permutations $\lhd: \Gamma \times Z(G) \to \Gamma$ and $\rhd: \Gamma \times Z(G) \to Z(G)$ making $(Z(G), \Gamma)$ a matched pair, and an $\Aut(G)$-grading on $\C$ whose neutral homogeneous component is a $(Z(G), \Gamma)$-crossed extension of a tensor subcategory of $\D$.
\end{theorem}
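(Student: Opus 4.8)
The plan is to reduce Theorem \ref{main2} to the already established Theorem \ref{main1} by producing an $\Aut(G)$-grading on $\C$ whose neutral component carries an \emph{abelian} exact sequence with kernel $\Rep(Z(G))$. Throughout I identify $\vect_G$ with $\KER_F$, so that the simple objects $\delta_g$, $g \in G$, become invertible objects of $\C$ generating a copy of $G$ inside $\mathrm{Inv}(\C)$.

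First I would construct the grading. The essential input is normality of $F$: since $\delta_g \in \KER_F$, for every simple object $X$ of $\C$ both $\delta_g \otimes X$ and $X \otimes \delta_g$ lie in the fibre $F^{-1}(F(X))$, and this fibre is a single orbit for the left (resp.\ right) tensor action of the invertible objects $\delta_g$. Comparing the two actions yields, for each simple $X$, a map $\psi_X \colon G \to G$ determined by $X \otimes \delta_g \cong \delta_{\psi_X(g)} \otimes X$; using associativity and the normality of $\KER_F$ one checks that $\psi_X$ is a group automorphism and that $X \mapsto \psi_X$ is constant on the simple constituents of a tensor product. This produces a faithful grading $\C = \bigoplus_{\psi \in \Aut(G)} \C_\psi$ in which $\delta_h$ is homogeneous of degree $\ad_h$. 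I expect this step to be the main obstacle: the delicate points are showing that the left and right $\vect_G$-orbits of a simple object genuinely coincide (so that $\psi_X$ is well defined and bijective) and that the assignment descends to a monoidal grading, both of which must be extracted from dominance and normality of $F$.

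Next I would analyse the neutral component $\C_e = \{X : X \otimes \delta_g \cong \delta_g \otimes X \text{ for all } g\}$, which is a tensor subcategory of $\C$. Since $\delta_h$ has degree $\ad_h$, one has $\C_e \cap \KER_F = \{\delta_h : h \in Z(G)\} = \vect_{Z(G)}$. Setting $\D_e := F(\C_e)$, a tensor subcategory of $\D$, the restriction of $F$ gives a sequence $\vect_{Z(G)} \toto \C_e \toto \D_e$ which I would verify to be exact by checking that $F|_{\C_e}$ remains dominant and normal and that its kernel is exactly $\vect_{Z(G)}$. Because $Z(G)$ is abelian there is an equivalence $\vect_{Z(G)} \cong \Rep(\widehat{Z(G)}) \cong \Rep(Z(G))$ under which the induced fibre functor is the forgetful one; hence this restricted sequence is an abelian exact sequence of finite tensor categories.

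Finally I would apply Theorem \ref{main1} to the abelian exact sequence $\Rep(Z(G)) \toto \C_e \toto \D_e$. This yields a finite group $\Gamma$, mutual permutation actions $\lhd \colon \Gamma \times Z(G) \to \Gamma$ and $\rhd \colon \Gamma \times Z(G) \to Z(G)$ making $(Z(G), \Gamma)$ a matched pair, together with a $(Z(G), \Gamma)$-crossed action on $\D_e$ exhibiting $\C_e$ as the crossed extension $\D_e^{(Z(G), \Gamma)}$. Combined with the $\Aut(G)$-grading of the first step, this is precisely the assertion of Theorem \ref{main2}, the characteristic subgroup $Z(G)$ arising exactly because passing to $\C_e$ cuts the non-commutative kernel $\vect_G$ down to its central, hence abelian, part $\vect_{Z(G)}$.
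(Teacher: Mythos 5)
Your global strategy does coincide with the paper's: produce an $\Aut(G)$-grading on $\C$, pass to the neutral component, observe that the kernel there shrinks to $\vect_{Z(G)}$, which makes the restricted sequence abelian since $\vect_{Z(G)} \cong \Rep \widehat{Z(G)}$, and then apply Theorem \ref{main1}. The genuine gap is in your first step, and it is exactly the point you flagged but proposed to resolve incorrectly: the rule $X \otimes \delta_g \cong \delta_{\psi_X(g)} \otimes X$ does not define a map. The issue is not whether the left and right orbits of the fibre coincide, but whether the action of the invertibles $\delta_h$ on the fibre is \emph{free}; if some $\delta_h \otimes X \cong X$ with $h \neq e$, then $\psi_X(g)$ is not unique and your construction collapses. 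Freeness genuinely fails for exact sequences of this type. Concretely, the short exact sequence of Hopf algebras $k \toto kZ(Q_8) \toto kQ_8 \toto k[Q_8/Z(Q_8)] \toto k$ induces, as in Section \ref{section-hopf}, an exact sequence of finite tensor categories $\Rep(Q_8/Z(Q_8)) \toto \Rep Q_8 \toto \Rep Z(Q_8)$, and $\Rep(Q_8/Z(Q_8)) \cong \vect_G$ with $G \cong \mathbb{Z}/2 \times \mathbb{Z}/2$. The kernel consists of the four characters of $Q_8$, and the unique $2$-dimensional simple $X$ absorbs all of them: $\delta \otimes X \cong X \cong X \otimes \delta$ for every character $\delta$. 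Hence \emph{every} assignment $\psi_X(g)$ satisfies your defining condition, and no amount of associativity, dominance or normality can single one out: the degree of $X$ is simply not an invariant of the Grothendieck ring together with the identification $\KER_F \cong \vect_G$, which is all the data your construction uses.

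This is precisely why the paper's proof works with the induced central algebra $(A,\sigma)$ of the sequence instead of with isomorphism classes. Writing $A = \bigoplus_g X_g$ as a sum of invertibles, the half-braiding supplies a distinguished isomorphism $\sigma_X \colon A \otimes X \to X \otimes A$, and the paper defines $\partial(X) \in \Aut(G)$ by recording onto which summand $X \otimes X_{h^{-1}}$ the restriction of $\sigma_X$ to $X_g \otimes X$ lands ($h = \partial(X)(g)$); that $\partial(X)$ is a group automorphism uses that the multiplication of $A$ is a morphism in $\Z(\C)$, and multiplicativity of $\partial$ on constituents uses $\sigma_{X \otimes Y} = (\id_X \otimes \sigma_Y)(\sigma_X \otimes \id_Y)$ together with naturality of $\sigma$. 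In the $Q_8$ example above the induced central algebra is $k[Q_8/Z(Q_8)]$ with the flip half-braiding (legitimate because $kQ_8$ is cocommutative), so the paper's grading is trivial and all of $\Rep Q_8$ sits in the neutral component --- a conclusion your $\psi$ cannot see. Once the grading is built from $\sigma$, your remaining two steps do track the paper's proof, up to the minor correction that $\D_e$ should be taken as the tensor subcategory of subquotients of $F(X)$, $X \in \C_0$ (the paper's $\D_0$), since the essential image of a tensor functor need not be closed under subquotients.
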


Theorem \ref{main2} is proved in Section \ref{section-pt} (see Theorem \ref{main-pt}); its proof consists in constructing an $\Aut(G)$-grading  of $\C$ whose neutral homogeneous component is a (necessarilly abelian) extension of $\Rep Z(G)$ by a tensor subcategory of $\D$, and then applying Theorem \ref{main1}.

\medbreak Recall that every matched pair of groups $(G, \Gamma)$ gives rise to a group,
denoted $G \Join \Gamma$, defined as follows: $G \bowtie \Gamma = G \times \Gamma$ with multiplication
\begin{equation}\label{def-gbgamma}
(g, s) (h, t) = (g(s \rhd h), (s\lhd h) t),\end{equation} for all $g, h \in G$, $s, t\in
\Gamma$. The group $G \bowtie \Gamma$ admits an \emph{exact factorization} into its subgroups $G \times 1 \cong G$ and $1\times \Gamma \cong \Gamma$; this condition does in fact characterize the matched pairs $(G, \Gamma)$.

\medbreak
Let $\D$ be a tensor category endowed with a crossed action of a matched pair $(G, \Gamma)$. We show that the category $\D^{(G, \Gamma)}$ is categorically Morita equivalent to a tensor category graded by the group $G \bowtie \Gamma$ with neutral homogeneous component $\D_e$ (Theorem \ref{grading-crossed}).

\medbreak
A fusion category $\C$ is called \emph{weakly group-theoretical} if $\C$ is categorically Morita equivalent to a nilpotent fusion category \cite{ENO2} (we recall these notions in Section \ref{section-pmrs}). The question raised in \cite{ENO2} about the existence of a fusion category with integer Frobenius-Perron dimension which is not weakly group-theoretical is an open problem.
It is known that every extension of fusion categories with integer Frobenius-Perron dimension has integer Frobenius-Perron dimension. However, the analogous question regarding extensions of weakly group-theoretical fusion categories is open as well.

\medbreak
Let $G$ be a finite group. We combine Theorem \ref{grading-crossed} with Theorems \ref{main1} and \ref{main2} to show that if $\C$ is a fusion category fitting into an abelian exact sequence $\Rep G \toto \C \toto \D$ or into an exact sequence $\vect_G \toto \C \toto \D$, then $\C$ is  weakly group-theoretical if and only if $\D$ is weakly group-theoretical (Corollaries \ref{wgt-ext-rep} and \ref{wgt-pt}).
As a consequence, we show in Section \ref{section-hopf} that every  semisolvable semisimple Hopf algebra, as introduced in \cite{MW}, is weakly group-theoretical (Corollary \ref{main-hopf}).

\section{Preliminaries on tensor categories and tensor functors}\label{section-pmrs}

We begin by recalling the main notions that will be relevant throughout the paper. We refer the reader to the reference \cite{EGNO} for a systematic study of tensor categories.

\medbreak
A tensor category over $k$ is a $k$-linear abelian rigid monoidal
category $\C$ such that every object of $\C$ has finite length, Hom spaces are finite dimensional, the tensor product $\otimes: \C \times \C \to \C$ is $k$-bilinear and the unit object $\1 \in \C$ is simple. A tensor category $\C$ is finite if it is finite as a $k$-linear abelian category. 
A fusion category is a finite semisimple  tensor category.
Unless explicitly stated, all tensor categories will be
assumed to be strict.

\medbreak
Let $\C$ and $\D$ be tensor categories over $k$. A \emph{tensor functor} $F: \C \to \D$ is a $k$-linear exact (strong) monoidal functor $F$.

A tensor functor $F: \C \to \D$ is \emph{dominant} if every object of $\D$ is a subobject of $F(X)$ for some object $X$ of $\C$. If $\D$ is a finite tensor category, then $F$ is dominant if and only if every object of $\D$ is a subquotient of $F(X)$ for some object $X$ of $\C$; see \cite[Lemma 2.3]{eg-emc}.

\medbreak
A \emph{left module category} over a tensor category $\C$ is a  $k$-linear abelian category $\mathcal{M}$ endowed with a $\C$-action, that is, a $k$-linear functor $\ootimes:\mathcal{C}\times\mathcal{M}\rightarrow\mathcal{M}$ exact in both variables, and natural isomorphisms
$m_{X,Y,M}:(X\otimes Y)\ootimes M\rightarrow X\ootimes(Y\ootimes M)$, $u_M:\textbf{1}\ootimes M\rightarrow M$, $X, Y \in \C$, $M \in \mathcal M$, satisfying natural associativity and unitary conditions.

If $\M, \N$ are $\C$-module categories, a $\C$-\emph{module functor} $\M \to \N$ is a $k$-linear functor $F:\M \to \N$ endowed with a natural isomorphism $F \circ \ootimes \to \ootimes \circ (\id_\C \times F)$ satisfying appropriate conditions. A module category $\M$ is called \emph{indecomposable} if it is not equivalent as a $\C$-module category to the direct sum of two nontrivial module categories.

Suppose $\C$ is a finite tensor category. A finite left $\C$-module category $\M$ is \emph{exact} if for every projective object $P\in \C$ and for every $M\in\M$, $P\otimes  M$ is a projective object of $\M$.
Let $\M$ be an  indecomposable exact $\C$-module category. Then the category $\Fun_\C(\M)$ of right exact $\C$-module endofunctors of $\mathcal{M}$ is a finite tensor category.
A tensor category $\mathcal{D}$ is \emph{categorically Morita equivalent} to $\mathcal{C}$  if  $\mathcal{D}\cong \Fun_\C(\M)^{\text{op}}$ for some exact  indecomposable $\mathcal{C}$-module category $\mathcal{M}$.

\medbreak
Recall that an \emph{algebra in $\C$} is an object $A$ endowed with morphisms $m:A \otimes A \to A$ and $\eta: \1 \to A \otimes A$ in $\C$, called the \emph{multiplication} and \emph{unit} morphisms, respectively, satisfying $m(\id_A \otimes m) = m (m \otimes \id_A)$ and $m(\id_A \otimes \eta) = \id_A = m(\eta \otimes \id_A)$.

\medbreak
Let $A$ be an algebra in $\C$. A right $A$-module in $\C$
is an object $X$ of $\C$ endowed with a morphism $\mu_X: X \otimes A \to X$ such that $\mu_X(\mu_X \otimes \id_A) = \mu_X(\id_X \otimes m)$. A morphism $f: X \to Y$ of right $A$-modules is a morphism $f$ in $\C$ such that $\mu_Y(f \otimes \id_A) = f \mu_X$. The category of right $A$-modules in $\C$ will be denoted $\C_A$.

Let $F_A: \C \to \C_\A$ be the functor such that $F_A(X) = X \otimes A$, $X \in \C$, with right $A$-module structure induced by the multiplication of $A$. Then $F_A$ is left adjoint to the forgetful functor $\C_A \to \C$.

\medbreak
Left $A$-modules and $A$-bimodules in $\C$ are defined similarly; the corresponding categories will be indicated by ${}_A\C$ and ${}_A\C_A$, respectively. In particular ${}_A\C_A$ is a monoidal category with tensor product $\otimes_A$ and unit object $A$. That is, if $X$ and $Y$ are two bimodules with left actions $\lambda_X$, $\lambda_Y$ and right actions $\mu_X$, $\mu_Y$, their tensor product
$X\otimes_A Y$ is the coequalizer of the morphisms
$$\xymatrix@C=5.5em{ X \otimes A \otimes Y \ar[r]<1ex>^{\mu_X \otimes \id_Y} \ar@<-1ex>[r]_{\quad \id_X \otimes \lambda_Y}  & X \otimes Y,}
$$ The bimodule structure of $X \otimes_A Y$ is obtained by factoring the morphisms $\id_X \otimes \mu_Y$ and $\lambda_X \otimes \id_Y$ through the canonical epimorphism $X \otimes Y \to X \otimes_A Y$.
The unit object of $\C_A$ is $A$ with bimodule structure given by  multiplication.

\medbreak
The tensor product of $\C$ endows  $\C_A$ with a left $\C$-module category structure where the action $\ootimes : \C \times \C_A \to \C_A$ is defined as $X \ootimes Y = X \otimes Y$, with $$\mu_{X \ootimes Y} = \id_X \otimes \mu_Y: X\otimes Y \otimes A \to X \otimes Y,$$ for all objects $X \in \C$, and $\mu_Y: Y \otimes A \to Y$ in $\C_A$. In addition, there is an equivalence $\Fun_\C(\C_A)^{op} \cong {}_A\C_A$.

\subsection{Braidings and commutativity}\label{braid}
A \emph{braided tensor category} is a tensor category $\B$ equipped with a natural isomorphism $c_{X, Y}: X \otimes Y \to Y \otimes X$, $X, Y \in \B$, called the  \emph{braiding} of $\C$, such that
\begin{equation*}c_{X, Y \otimes Z} = (\id_Y \otimes c_{X, Z}) (c_{X, Y} \otimes \id_Z), \quad c_{X \otimes Y, Z} = (c_{X, Z}\otimes \id_Y) (\id_X \otimes c_{Y, Z}),
\end{equation*}
for all $X, Y, Z \in \C$.

An object $X$ of a braided tensor category $\B$ will be called \emph{symmetric} if $c_{X, X}^2 = \id_{X \otimes X}$.

\medbreak
Let $\C$ be a tensor category. A \emph{half-braiding} on an object $Z$ of $\C$ is a natural isomorphism $c: Z \otimes - \to - \otimes Z$ such that
\begin{equation*}c_{X \otimes Y} = (\id_X \otimes c_Y) \; (c_X \otimes \id_Y),
\end{equation*} for all objects $X, Y \in \C$. The Drinfeld center $\Z(\C)$ of $\C$ is the categoy whose objects are pairs $(Z, c)$, where $Z \in \C$ and $c$ is a half-braiding on $Z$, and morphisms $\zeta: (Z, c) \to (Z', c')$ are morphisms $\zeta: Z \to Z'$ in $\C$ such that, for all $X\in \C$,
\begin{equation*}(\id_X \otimes f) c_X = c'_X (f \otimes \id_X),
\end{equation*}

The Drinfeld center of $\C$ is a braided tensor category with monoidal tensor product and braiding defined in the form
\begin{equation*}(Z, c) \otimes (Z', c')  = (Z \otimes Z', (c \otimes \id_Z)(\id_Z \otimes c')),\quad
c_{(Z, c),  (Z', c')} = c_{Z'},
\end{equation*}
for all $(Z, c), (Z', c') \in \Z(\C)$.
The forgetful functor $\Z(\C) \to \C$, $(Z, c) \mapsto Z$, is a strict tensor functor.

\medbreak
An algebra $A$  in a braided tensor category $\B$ is called \emph{commutative} if $m \, c_{A, A} = m: A \otimes A \to A$.

\medbreak
An algebra $(A, \sigma)$ in $\Z(\C)$ will be called a \emph{central algebra} of $\C$. If $(A, \sigma)$ is a commutative central algebra of $\C$, then $\C_A$ is a monoidal category.  The half-braiding $\sigma$ gives rise to a section $\C_A \to {}_A\C_A$ of the forgetful functor; namely, for every right $A$-module $Y$,  $\lambda_Y = \mu_Y \sigma_Y: A \otimes Y \to Y$ is a left $A$-action that makes $Y$ into an $A$-bimodule.  Thus $\C_A$ becomes a monoidal subcategory of  ${}_A\C_A$. The tensor product
$X\otimes_A Y$ of two right $A$-modules $\mu_X: X \otimes A \to X$ and $\mu_Y: Y \otimes A \to Y$ is the coequalizer of the morphisms
$$\xymatrix@C=5.5em{ X \otimes A \otimes Y \ar[r]<1ex>^{\mu_X \otimes \id_Y} \ar@<-1ex>[r]_{\quad \id_X \otimes \mu_Y\sigma_Y}  & X \otimes Y.}
$$

The functor $F_A: \C \to \C_A$, $F_A(X) = X \otimes A$ is a monoidal functor with monoidal structure $(F_A^2)_{X, Y}: F_A(X)\otimes_A F_A(Y) \to F_A(X \otimes Y)$ induced by the composition
$$(\id_{X \otimes Y} \otimes m) \, (\id_X \otimes \sigma_Y \otimes \id_A): X \otimes A \otimes Y \otimes A \to X \otimes Y \otimes A,$$
for all $X, Y \in \C$, and $F_A^0 = \id_A: A \to F_A(\1)$.

\medbreak
Let $A$ be a commutative algebra in a braided tensor category $\B$. Then $(A, c_{A, -})$ is a commutative algebra in the center $\Z(\B)$ and therefore $\B_A$ is a monoidal category with tensor product $\otimes_A$ and unit object $A$. A \emph{dyslectic right $A$-module in $\B$} is a right
$A$-module $\mu_X: X \otimes A \to X$ in $\B$ satisfying $\mu_X c_{A, X}c_{X, A} = \mu_X$. The category $\dys \B_A$ of dyslectic $A$-modules in $\B$ is a full monoidal  subcategory of $\B_A$ and it is braided with braiding $\tilde c$ such that for all $X, Y \in \B_A$ the following diagram commutes:
$$\xymatrix{X \otimes Y \ar[r]^{c_{X, Y}}\ar[d]_{p} & Y \otimes X \ar[d]^{p}\\
	X \otimes_A Y \ar[r]_{\tilde c_{X, Y}} & Y \otimes_A X,}
$$
See \cite{pareigis}.

\medbreak
In particular, if $\A = (A, \sigma)$ is a commutative central algebra of a tensor category $\C$, then $\dys \Z(\C)_\A$ is a braided monoidal category. By \cite[Corollary 4.5]{schauenburg}, the functor $(Z, c) \to (Z, \tilde c)$ induces an equivalence of braided monoidal categories $\dys \Z(\C)_\A \to
\Z(\C_A)$.

\subsection{Perfect tensor functors}\label{perfect} Let $\C$ and $\D$ be tensor categories. A tensor functor $F: \C \to \D$ is called \emph{perfect} if it admits an exact right adjoint \cite[Subsection 2.1]{indp-exact}.

\medbreak Suppose $F: \C \to \D$ is a dominant perfect tensor functor. Let $R: \D \to \C$ be a right adjoint of $F$, so that $R$ is faithful exact. Let also $A = R(\1)$.  Then there exists a half-braiding $\sigma$ on $A$ such that $(A, \sigma)$ is a  commutative central algebra of $\C$ which satisfies $\Hom_\C(\1, A) \cong k$. Furthermore, there is an equivalence of tensor categories $\kappa: \D \to \C_A$ such that the following diagram of tensor functors is commutative up to a monoidal natural isomorphism
$$\xymatrix{\C \ar[r]^{F} \ar[rd]_{F_A} & \D \ar[d]^{\kappa}\\ & \C_A.}$$
We shall call $(A, \sigma)$ the \emph{induced central algebra of $F$}. See \cite[Section 6]{tensor-exact}.

\begin{lemma}\label{finite-perfect} Every tensor functor $F: \C \to \D$ between finite tensor categories $\C$ and $\D$ is perfect.
\end{lemma}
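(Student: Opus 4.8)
The plan is to construct the right adjoint explicitly and then to deduce its exactness from the finiteness of $\C$ and $\D$; the organizing device is to regard $\D$ as a module category over $\C$ through $F$. First I would settle the existence of the adjoint. Using $F$, endow $\D$ with a left $\C$-module category structure by $X \ootimes Y := F(X) \otimes Y$ for $X \in \C$, $Y \in \D$; since $F$ is strong monoidal and both categories are rigid, this action is exact in each variable. Concretely, fixing finite dimensional $k$-algebras with $\C \simeq A\modd$ and $\D \simeq B\modd$ and using that the exact (hence right exact) $k$-linear functor $F$ is isomorphic to $F(A) \otimes_A (-)$ by the Eilenberg--Watts theorem, one obtains the right adjoint $R = \Hom_B(F(A), -) \colon \D \to \C$; invariantly, $R$ is the internal Hom $\underline{\Hom}(\1, -)$ of the module category, characterized by $\Hom_\C(X, \underline{\Hom}(\1, Y)) \cong \Hom_\D(F(X), Y)$. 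This shows that $R$ exists and is left exact; symmetrically, left exactness of $F$ provides a left adjoint $L$, which is automatically right exact.

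It then remains to upgrade \emph{$R$ left exact} to \emph{$R$ exact}. For this I would invoke the theory of exact module categories: when $\D$ is an \emph{exact} $\C$-module category, every internal Hom functor $\underline{\Hom}(M, -)$ is exact, and in particular so is $R = \underline{\Hom}(\1, -)$. Thus the statement reduces to checking that $\D$, with the action above, is an exact $\C$-module category. Unwinding the definition, this asks that $F(P) \otimes Y$ be projective in $\D$ for every projective $P \in \C$ and every $Y \in \D$; as tensoring a projective object with an arbitrary one stays projective in a finite tensor category, and taking $Y = \1$, this is equivalent to the single assertion that $F$ carries projective objects of $\C$ to projective objects of $\D$.

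This preservation of projectivity is the technical heart of the argument and the main obstacle, and it is the only point where the finiteness of $\C$ and $\D$ — rather than their mere rigidity — is indispensable. Here I would exploit the Frobenius property of finite tensor categories, that projective and injective objects coincide, to test projectivity of $F(P)$ through exactness of $\Hom_\D(-, F(P)) \cong \Hom_\C(L(-), P)$, and through the projection formula $R(F(X) \otimes Y) \cong X \otimes R(Y)$, which with $A_0 = R(\1)$ gives $RF(X) \cong X \otimes A_0$. I would caution that purely formal adjunction and duality manipulations (for instance the natural isomorphism $R(Y) \cong \ldual{(L(\rdual{Y}))}$ coming from preservation of duals) only show the various one-sided exactness conditions for $L$ and $R$ to be equivalent to one another, and so do not by themselves close the loop. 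Breaking this circle, and thereby genuinely establishing that $F(P)$ is projective in $\D$, is precisely the step I expect to require the finite Frobenius structure in an essential way.
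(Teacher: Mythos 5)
You have correctly reproduced the skeleton of the paper's own argument: regard $\D$ as a $\C$-module category via $X \ootimes Y = F(X) \otimes Y$, obtain the right adjoint $R$ from finiteness (in your formulation $R = \underline{\Hom}(\1,-)$; in the paper's, $R$ is the right adjoint of $F$ viewed as a $\C$-module functor), and deduce exactness of $R$ from the general fact that internal Homs of (equivalently, module functors between) \emph{exact} module categories are exact, cf. \cite[Proposition 7.6.9]{EGNO}. Everything you assert up to that point is correct, including the reduction of exactness of the module category $\D$ to the single statement that $F$ carries projectives to projectives. But the proposal stops exactly at the step that carries all the content: you never prove that statement, and instead record the expectation that the Frobenius property of finite tensor categories will somehow yield it. That is a genuine gap, not a deferred routine verification.

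In fact no argument can close this gap at the stated level of generality, because the claim is false for arbitrary tensor functors: take $\C = \vect$ and $\D$ any non-semisimple finite tensor category, and let $F(V) = V \otimes \1$. The right adjoint is then $R = \Hom_\D(\1,-)$, which is not exact because $\1$ is not projective in $\D$; equivalently, $\D$ is an exact $\vect$-module category only if $\D$ is semisimple, and $F$ does not preserve projectivity. What is true is that a \emph{dominant (surjective)} quasi-tensor functor between finite tensor categories maps projectives to projectives \cite[Theorem 6.1.16]{EGNO}; since $P$ projective forces $F(P) \otimes Y$ projective in the finite tensor category $\D$, dominance of $F$ does give that $\D$ is an exact $\C$-module category, after which either your route (exactness of internal Homs) or the paper's (exactness of module functors) finishes the proof. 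You should be aware that the paper's own proof elides this very point when it declares that ``similarly, $\D$ is an exact $\C$-module category'': the lemma really requires a dominance hypothesis, which happens to hold in every application made of it in the paper, since there $F$ is always the dominant functor of an exact sequence. So your instinct that this step is the crux was sound; the resolution is not the Frobenius property but dominance together with \cite[Theorem 6.1.16]{EGNO}.
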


\begin{proof}  The finite tensor category $\C$ is an exact right $\C$-module category with the action given by tensor product. Similarly, $\D$ is an exact $\C$-module category with the action given by $X \ootimes Y = F(X) \otimes Y$, $X \in \C$, $Y\in \D$. Since $F$ is a tensor functor, then it is a $\C$-module functor with the $\C$-module constraint defined by the monoidal structure of $F$: $(F^2_{X_1, Y})^{-1}: F(X_1 \otimes X_2) \to F(X_1) \otimes F(X_2) = F(X_1) \ootimes X_2$, $X_1, X_2 \in \C$.
	
In addition, since $\C$ and $\D$ are finite, then $F$ has a right adjoint $R$. By \cite[Section 7.12]{EGNO}, $R: \D \to \C$ is also a $\C$-module functor. Therefore, by \cite[Proposition 7.6.9]{EGNO}, $R$ is exact. This finishes the proof of the lemma.
\end{proof}

\begin{example}\label{comodh} (\cite[Example 6.3]{tensor-exact}.)
Let $H$ be a finite dimensional Hopf algebra over $k$.
The forgetful functor $U : \comod H \to \vect_k$ admits a right
adjoint $R= - \otimes H$. Let $(A,\sigma)$ be the induced central algebra of $U$. As an algebra in $\comod H$,
$A=H$ with right coaction $\Delta$. For any right $H$-comodule $V$, the half-braiding $\sigma_V: A \otimes V \to V \otimes A$ is
defined by
$$\sigma_{V}(h \otimes v) =  v_{(0)} \otimes S(v_{(1)})\, h\, v_{(2)}, \quad h \in H, v \in V.$$

In particular, if $G$ is a finite group and $U: \Rep G \to \vect$ is the forgetful functor, then the induced central algebra of $U$ is the algebra $A_0 = k^G$ with $G$-action defined by $(g.f)(h) = f(hg)$, $g, h \in G$, and the  half-braiding given by the flip isomorphism $\tau_V: A_0 \otimes V \to V \otimes A_0$, for every representation $V$. \end{example}

\subsection{Tensor functors on $\Rep G$}\label{repg}
Let $\E$ be a finite tensor category  and let $F: \E \to \vect$ be a tensor functor.

Let $(A, \sigma) \in \Z(\E)$ be the induced central algebra of $F$.  Hence   $\E_A$ is a tensor
category equivalent to $\vect$. Moreover, let $U:\E_A \to \E$ be the forgetful functor. Then the functor $\Hom_A(A, -)
\cong \Hom_\E(\1, U(-))
: \E_A \to \vect$ is  an equivalence of tensor categories (in fact, the only one up to monoidal natural isomorphism) and the following triangle of tensor functors commutes up to a monoidal natural isomorphism:
$$\xymatrix{\E \ar[r]^{F_A} \ar[rd]_{F} & \E_A \ar[d]^{\Hom_\E(\1, U(-))}\\ & \vect.}$$

Let $F_0 = \Hom_A(A, F_A(-)) \cong \Hom_\E(\1, UF_A(-)) : \E \to \vect$ and let  $\Aut_{\otimes}(F_0)$, $\Aut_{\otimes}(F)$ be the group of monoidal natural endomorphisms of $F_0$ and $F$, respectively (such endomorphisms are in fact  automorphisms, c.f. \cite[Proposition 7.1]{joyal-street}).

\medbreak
Let also $\Aut_\E(A)$ be the set of algebra endomorphisms of $A$ in $\E$.
Then $\Aut_\E(A)$ is a group with respect to composition and there is a group isomorphism $\tau: \Aut_\E(A) \to \Aut_{\otimes}(F_0)$ defined in the form
\begin{equation*}(\tau_g)_X(\zeta) = (\id_X \otimes g)\zeta,
\end{equation*} for all $g \in \Aut_\E(A)$, $X \in \E$, $\zeta \in \Hom_\E(\1, X \otimes A)$. See \cite[Remark 2.9]{mueger-crossed}. Since $F$ and $F_0$ differ by a monoidal natural isomorphism then $\Aut_\otimes(F) \cong \Aut_\otimes(F_0)$ and therefore we obtain a group isomorphism
\begin{equation}\label{iso-aut}\Aut_\E(A) \cong \Aut_\otimes(F).
\end{equation}

\begin{remark}\label{rmk-g} Let $G$ be a finite group. Let $U: \Rep G \to \vect$ be the forgetful functor and let $A_0$ be the induced central algebra of  $U$ (c.f. Example \ref{comodh}).
In this case the groups $\Aut_{\otimes}(U) \cong \Aut_G(A_0)$ are canonically isomorphic to $G$.

\medbreak Let $\E$ be a tensor category endowed with a tensor category equivalence $\tilde F: \E \to \Rep G$. Consider the tensor functor $F = U\tilde F: \E \to \vect$ and let $A \in \E$ be the induced central algebra of $F$ (up to isomorphism, the algebra $A$ corresponds to the algebra $A_0$ under the equivalence $\tilde F$). The fact that $F$ and $U$ differ by a monoidal equivalence combined with \eqref{iso-aut} entail the group isomorphisms
\begin{equation*}\Aut_\E(A) \cong \Aut_{\otimes}(F) \cong \Aut_{\otimes}(U) \cong G.
\end{equation*}
\end{remark}

\section{Exact sequences of tensor categories}\label{section-exact}

Let $\C$, $\C''$ be tensor
categories over $k$. A tensor functor $F: \C \to \C''$ is called \emph{normal}
if for every object $X$ of $\C$, there
exists a subobject $X_0 \subset X$ such that $F(X_0)$ is the
largest trivial subobject of $F(X)$.

For a tensor functor $F: \C \to \C''$, let $\KER_F$ denote the tensor
subcategory
$F^{-1}(\langle \uno \rangle) \subseteq \C$  of objects $X$ of
$\C$ such that $F(X)$ is a trivial object of $\C''$. If the functor $F$ has a right adjoint $R$, then $F$ is normal if and only if
$R(\uno)$ belongs to $\KER_F$ \cite[Proposition 3.5]{tensor-exact}.

\medbreak Let $\C', \C, \C''$ be tensor categories over $k$. An \emph{exact
	sequence of tensor categories} is a sequence of
tensor functors
\begin{equation}\label{exacta-fusion}\xymatrix{\C' \ar[r]^f & \C \ar[r]^F &
	\C''}
\end{equation}
such that the tensor functor $F$ is dominant and normal and the tensor functor
$f$ is a full embedding whose  essential image  is $\KER_F$ \cite{tensor-exact}.
We shall say that \eqref{exacta-fusion} is an \emph{exact sequence of finite tensor categories} if $\C'$, $\C$ and $\C''$ are finite tensor categories.

\medbreak
Two exact sequences of  tensor  categories
$\C_1' \overset{f_1}\toto  \C_1 \overset{F_1}\toto \C_1''$ and $\C_2' \overset{f_2}\toto  \C_2 \overset{F_2}\toto \C_2''$ are \emph{equivalent} if there exist
 equivalences of tensor categories $\varphi': \C_1' \to \C_2'$, $\varphi: \C_1 \to \C_2$ and $\varphi'': \C_1'' \to \C_2''$ such
that the the following diagram of tensor functors commutes up to monoidal natural  isomorphisms:
$$\begin{CD}\C_1' @>{f_1}>> \C_1 @>{F_1}>> \C_1'' \\
@VV{\varphi'}V @VV{\varphi}V @VV{\varphi''}V\\
\C_2' @>{f_2}>> \C_2 @>{F_2}>> \C_2''.
\end{CD}$$

\subsection{Induced Hopf algebra} The \emph{induced Hopf algebra} $H$ of an exact sequence \eqref{exacta-fusion} is defined as
\begin{equation}
H = \coend (\omega),
\end{equation}
where $\omega = \Hom_{\C''}(\1, Ff(-)): \C' \to \vect$ \cite[Subsection 3.3]{tensor-exact}. Thus there is an equivalence of tensor categories $\C' \to \comod H$ such that the following diagram of tensor functors is commutative:
$$\xymatrix{\C' \ar[r]^{\cong \quad } \ar[rd]_{\omega} & \comod H \ar[d]^{U}\\ & \vect.}$$

\subsection{Induced central algebra}\label{ss-induced} An exact sequence sequence of tensor categories \eqref{exacta-fusion} is called \emph{perfect} if $F$ is a perfect tensor functor. It follows from Lemma \ref{finite-perfect} that every exact sequence of finite tensor categories is a perfect exact sequence.

\begin{remark} Suppose that \eqref{exacta-fusion} is a perfect exact sequence of tensor categories. Since the functor $F$ has adjoints, then $\C'$ is a finite tensor category; see \cite[Proposition 3.15]{tensor-exact}. Hence in this case the induced Hopf algebra of \eqref{exacta-fusion} is finite dimensional.
\end{remark}

Let \eqref{exacta-fusion} be a perfect exact sequence and let  $(A, \sigma) \in \Z(\C)$ be the induced central algebra of $F$. Then $(A, \sigma)$ is  \emph{self-trivializing}, that is, $A \otimes A$ is a trivial object of $\C_A$. Let $\langle A \rangle$ be the smallest abelian subcategory of $\C$ containing
$A$ and stable under direct sums, subobjects and quotients. Then $F_A: \C \to \C_A$ is a normal dominant tensor functor with $\KER_{F_A} = \langle A \rangle$. Moreover, \eqref{exacta-fusion} is equivalent to the exact sequence
\begin{equation}\xymatrix{\langle A \rangle \ar[r] & \C \ar[r]^{F_A} &
	\C_A}.
\end{equation}
See \cite[Subsection 6.2]{tensor-exact}.

\begin{remark}\label{a=h} Let $(A, \sigma) \in \Z(\C)$ be the induced central algebra of a perfect exact sequence \eqref{exacta-fusion} and let $H$ be its induced Hopf algebra. Then there are equivalences of tensor categories $\langle A \rangle \cong \C'\cong \comod H$. Under these equivalences, $(A, \sigma\vert_{\langle A \rangle})$ becomes identified with the induced central algebra of the tensor functor $\omega= \Hom_{\C''}(\1, Ff(-)): \C' \to \vect$, and thus with the induced central algebra of  the forgetful functor $U: \comod H  \toto \vect$ (see Example \ref{comodh}).
\end{remark}

\subsection{Factorization of the category $\Fun_\C(\C'')$}\label{ss-fact} Consider an exact sequence of finite tensor categories \eqref{exacta-fusion}.  Let $(A, \sigma) \in \Z(\C)$ be its induced central algebra.

\medbreak
Recall from \cite[Proposition 7.3]{mn} that the category $\Fun_\C(\C'')^{op}$ admits a \emph{factorization} \begin{equation} \Fun_\C(\C'')^{op} \cong (H\modd) \boxtimes \C'',\end{equation} where $H$ is the induced Hopf algebra of \eqref{exacta-fusion}.

More precisely, let $_A\C_A$ be the tensor category of $A$-bimodules in $\C$. Then $\Fun_\C(\C_A)^{op} \cong {}_A\C_A$ and there is an equivalence of $k$-linear categories
\begin{equation}\label{f-gral} (H\modd) \boxtimes \C_A \toto {}_A\C_A.
\end{equation} Under the equivalences of tensor categories $H\modd \cong {}_A(\comod H)_A \cong {}_A{\C'}_A \subseteq {}_A\C_A$ and $\C'' \cong \C_A \subseteq {}_A\C_A$, \eqref{f-gral} is induced by the tensor product functor $\otimes_A :{}_A{\C'\!}_A \boxtimes \C_A \to {}_A\C_A$.   See \cite[Proposition 7.3]{mn}.

\section{Group actions, group gradings and crossed actions of matched pairs on tensor categories}\label{section-crossed}
Let $G$ be a finite group and let $\C$ be a tensor category. Recall that a \emph{right action of $G$ on $\C$  by $k$-linear
autoequivalences} is a monoidal functor $\rho:
\underline{G}^{\op} \to \Aut_k(\C)$.
Explicitly, for every $g \in G$, we have a
$k$-linear functor $\rho^g: \C \to \C$ and natural isomorphisms
$$\rho^{g,h}_2 : \rho^g \rho^h \to \rho^{hg}, \quad g, h \in G,$$ and
$\rho_0 : \id_\C \to \rho^e$, satisfying
\begin{align}\label{ro-2} & (\rho^{ba, c}_2)_X \, (\rho^{a,b}_2)_{\rho^c(X)} =
(\rho^{a, cb}_2)_X \, \rho^a((\rho^{b,c}_2)_X), \\ \label{ro-3}& (\rho^{a,
e}_2)_X \rho^{a}({\rho_0}_X) = \id_{\rho^a(X)} = (\rho^{e, a}_2)_X
(\rho_0)_{\rho^{a}(X)},
\end{align}
for all $X \in \C$, $a, b, c \in G$.

Let $\rho: \underline G^{\op} \to  \Aut_k(\C)$ be a right action of
$G$ on $\C$ by $k$-linear autoequivalences. The \emph{equivariantization} of $\C$ under the action $\rho$ is the $k$-linear abelian category $\C^G$ whose objects are pairs $(X, r)$, where $X$ is an object
of $\C$ and $r = (r^g)_{g \in G}$ is a collection of isomorphisms $r^g:\rho^g(X)
\to X$, $g \in G$, such that
\begin{equation}\label{deltau} r^g \rho^g(r^h) = r^{hg} (\rho^{g,
	h}_2)_X, \quad \forall g, h \in G, \quad \quad
r^e{\rho_0}_X=\id_X,\end{equation} and a morphism $f: (X, r) \to
(Y, r')$ is a morphism $f: X \to Y$ in $\C$ such
that $fr^g = {r'}^g\rho^g(f)$, for all $g \in G$.

\medbreak A \emph{$G$-grading} of $\C$ is a decomposition $\C = \bigoplus_{g \in G}\C_g$ into abelian
subcategories $\C_g$, such that $\C_g \otimes \C_h \subseteq \C_{gh}$, for all
$g, h \in G$. A $G$-grading of $\C$ is equivalent to a $G$-grading of its Grothendieck ring.

The subcategories $\C_g$, $g \in G$, are called the \emph{homogeneous components} of
the grading. The neutral homogeneous component $\C_e$ is a tensor subcategory of $\C$. A $G$-grading $\C = \bigoplus_{g \in G}\C_g$ is faithful if $\C_g \neq 0$, for all $g \in G$.

\medbreak
Every finite tensor category $\C$ has a faithful universal grading $\C = \bigoplus_{u \in U(\C)}\C_u$, with neutral homogeneous component $\C_e$ equal to the adjoint subcategory $\C_{ad}$, that is, the smallest tensor
Serre subcategory of $\C$ containing the objects $X \otimes X^*$, where $X$ runs over the simple objects of $\C$. The group $U(\C)$ is called the \emph{universal grading group} of $\C$.

The upper central series of $\C$, $\dots \subseteq \C^{(n+1)} \subseteq \C^{(n)} \subseteq \dots \subseteq \C^{(0)} = \C$, is defined as $\C^{(0)} = \C$ and $\C^{(n+1)} = (\C^{(n)})_{ad}$, for all $n \geq 0$.
A tensor category $\C$ is called \emph{nilpotent} if there exists some $n \geq 0$ such that $\C^{(n)} \cong \vect$.
See \cite[Section 3.5 and 4.14]{EGNO}, \cite{gel-nik}.

\medbreak
Let $\Irr(\C)$ denote the set of isomorphism classes of simple objects of $\C$. Let also $\partial: \Irr(\C) \to G$ be a map such that for all simple objects $X$, $Y$ and for every composition factor $Z$ of $X \otimes Y$, we have $\partial(Z) = \partial(X) \partial(Y)$. Then $\partial$ defines a $G$-grading on $\C$, $\C = \bigoplus_{g \in G} \C_g$, where for each $g \in G$, $\C_g$ is the the smallest Serre subcategory of $\C$ containing the simple objects $X$ such that $\partial(X) = g$.

\subsection{Crossed extensions of tensor categories by matched pairs of groups}\label{ss-crossed}

Let $\C$ be a tensor category over $k$ and let $(G, \Gamma)$ be a matched pair of  groups.
Recall from \cite{crossed-action} that  a $(G, \Gamma)$-\emph{crossed action}
on $\C$ consists of:

\begin{itemize}\item  A $\Gamma$-grading $\C = \bigoplus_{s \in \Gamma}
\C_s$.

\item  A right action of $G$ by $k$-linear autoequivalences $\rho:
\underline{G}^{\op} \to \Aut_k(\C)$ such that
\begin{equation}\label{rho-partial} \rho^g(\C_s) = \C_{s \lhd g},\quad \forall
g\in G, \, s\in \Gamma,\end{equation}

\item A collection of natural isomorphisms $\gamma = (\gamma^g)_{g\in G}$:
\begin{equation}\label{gamma}\gamma^g_{X, Y}: \rho^g(X \otimes Y) \to
\rho^{t\rhd g}(X) \otimes \rho^g(Y), \quad X \in \C, \, t\in \Gamma,\,  Y \in
\C_t, \end{equation}

\item A collection of isomorphisms $\gamma^g_0: \rho^g(\uno) \to \uno$, $g \in
G$.
\end{itemize}

These data are subject to the commutativity of the following diagrams:

\begin{itemize}\item[(a)] For all $g \in G$, $X \in \C$, $s, t \in \Gamma$, $Y
\in \C_s$, $Z
\in \C_t$,
\begin{equation*}
\xymatrix @C=0.6in @R=0.45in{
\rho^g(X \otimes Y \otimes Z) \ar[rr]^{\gamma^g_{X\otimes Y, Z}}
\ar[d]_{\gamma^g_{X, Y \otimes Z}} & & \rho^{t \rhd g}(X \otimes Y) \otimes
\rho^g(Z) \ar[d]^{\gamma^{t\rhd g}_{X, Y} \otimes \id_{\rho^g(Z)}} \\
\rho^{st \rhd g}(X) \otimes \rho^g(Y \otimes Z)\ar[rr]_{\id_{\rho^{st \rhd
g}(X)}
\otimes \gamma^g_{Y, Z} \qquad }&& \rho^{s \rhd (t \rhd g)}(X) \otimes \rho^{t
\rhd g}(Y) \otimes \rho^g(Z)}
\end{equation*}

\item[(b)] For all $g \in G$, $X \in \C$,
\begin{equation*}\xymatrix @C=0.6in @R=0.45in{
\rho^g(X) \otimes \rho^g(\uno)  \ar[dr]_{\id_{\rho^g(X)} \otimes \gamma^g_0
\quad } & \ar[l]_{\quad \gamma^g_{X, \uno}} \rho^g(X) \ar[d]^=
\ar[r]^{\gamma^g_{\uno, X} \quad } & \rho^g(\uno) \otimes \rho^g(X)
\ar[dl]^{\gamma^g_0 \otimes \id_{\rho^g(X)}} \\
& \rho^g(X) }\end{equation*}

\item[(c)] For all $g, h \in G$, $X \in \C$, $s\in \Gamma$, $Y \in \C_s$,
\begin{equation*}\xymatrix@C=0.6in @R=0.45in{
\rho^g\rho^h(X\otimes Y) \ar[dd]_{\rho^g(\gamma^h_{X, Y})} \ar[r]^{{\rho_2}_{X
\otimes Y}^{g, h}} & \rho^{hg}(X \otimes Y) \ar[d]^{\gamma^{hg}_{X, Y}} \\
& \rho^{s \rhd hg}(X) \otimes \rho^{hg}(Y) \\
\rho^g(\rho^{s\rhd h}(X) \otimes \rho^h(Y)) \ar[r]_{\gamma^g_{\rho^{s\rhd h}(X),
\rho^h(Y)}}  & \rho^{(s\lhd h)\rhd g}\rho^{s\rhd h}(X) \otimes \rho^g\rho^h(Y)
\ar[u]_{{\rho_2}_X^{(s\lhd h) \rhd g, s \rhd h} \otimes
{\rho_2}_Y^{g, h}} }\end{equation*}

\item[(d)] For all $g, h \in G$,
\begin{equation*}\xymatrix@C=0.6in @R=0.45in{
\rho^g\rho^h(\uno) \ar[d]_{\rho^{g}(\gamma^h_0)} \ar[r]^{(\rho_2^{g, h})_{\uno}}
& \rho^{hg}(\uno) \ar[d]^{\gamma^{hg}_0} \\
\rho^g(\uno) \ar[r]_{\gamma^{g}_0} & \uno}
\end{equation*}

\item[(e)] For all $X \in \C$, $s \in \Gamma$, $Y \in \C_s$,
\begin{equation*}\xymatrix@C=0.6in @R=0.45in{
X\otimes Y \ar[dr]_{{\rho_0}_X \otimes {\rho_0}_Y} \ar[r]^{{\rho_0}_{X\otimes
Y}} & \rho^e(X\otimes Y) \ar[d]^{\gamma^{e}_{X, Y}} \\
& \rho^e(X)\otimes \rho^e(Y)}
\xymatrix@C=0.6in @R=0.45in{
\uno \ar[dr]_= \ar[r]^{{\rho_0}_\uno} & \rho^e(\1) \ar[d]^{\gamma^e_0}
\\
& \uno}
\end{equation*}
\end{itemize}

A (finite) tensor category $\C$ endowed with a $(G, \Gamma)$-crossed action will be called a (finite) \emph{$(G, \Gamma)$-crossed tensor category}.

\medbreak Let $(G, \Gamma)$ be a matched pair of
finite groups and let $\C$ be a $(G, \Gamma)$-crossed tensor category. Then there is a canonical  exact sequence of tensor categories
\begin{equation}\label{c-abelian}\Rep G \toto \C^{(G, \Gamma)} \overset{F}\toto
\C\end{equation} with induced Hopf algebra $H \cong
k^G$. The tensor category $\C^{(G, \Gamma)}$ is defined as the equivariantization $\C^G$ of $\C$ under the action $\rho$ and $F: \C^{(G, \Gamma)} \to \C$ is the forgetful functor $F(V, (r^g)_{g \in G}) = V$. The tensor product of $\C^{(G, \Gamma)}$ is defined as $(X, r) \otimes (Y, r') = (X \otimes Y, \tilde
r)$, where  ${\tilde r}^g$, $g \in G$, is the composition
$$\bigoplus_{s \in \Gamma} \rho^g(X \otimes Y_s)
\overset{\oplus_s\gamma^g_{X, Y_s}}\toto \bigoplus_{s \in \Gamma} \rho^{s \rhd
	g}(X) \otimes \rho^g(Y_s) \overset{\oplus_s r^{s \rhd g} \otimes {r'}^g_s}\toto
\bigoplus_{s \in \Gamma} X \otimes Y_{s\lhd g} = X \otimes Y,$$
for $Y = \bigoplus_{s \in \Gamma}Y_s$, $Y_s \in \C_s$. See \cite[Theorem 6.1]{crossed-action}.

\begin{definition} The tensor category $\C^{(G, \Gamma)}$ will be called a \emph{crossed extension of $\C_e$ by the matched pair $(G, \Gamma)$} or simply a \emph{$(G, \Gamma)$-crossed extension of $\C_e$}.
\end{definition}

\begin{example} Suppose that $\rho: \underline{G}^{op} \to \Aut_{\otimes}(\C)$ is an action by tensor auto-equivalences of a tensor category $\C$. Then the equivariantization $\C^G$ is a $(G, \{e\})$-crossed extension of $\C$, where $\{e\}$ is the trivial group endowed with the trivial actions $\lhd: \{e\} \times G \to G$ and $\rhd: \{e\} \times G \to \{e\}$.
	
On the other hand, if $\C$ is a tensor category graded by a group $\Gamma$, then $\C$ is a $(\{e\}, \Gamma)$-crossed extension of $\C_e$ in a similar way.
\end{example}

Further examples of $(G, \Gamma)$-crossed extensions are the fusion categories of representations of abelian extensions of Hopf algebras  (or Kac algebras). Every such Hopf algebra $H$ is isomorphic to a suitable bicrossed product $k^\Gamma {}^\tau\#_\sigma kG$ and $H\modd$ is a $(G, \Gamma)$-crossed extension of $\vect$. See \cite[Subsection 8.2]{crossed-action}.

The following example shows that some $(G, \Gamma)$-crossed extensions cannot be built up by means of equivariantizations or group graded extensions.

\begin{example} Let $n \geq 5$ be an odd integer and let $H = k^{\Aa_{n-1}} \# kC_n$, $n \geq 5$,  be the bicrossed product associated to the matched pair $(C_n, \Aa_{n-1})$ arising from the exact factorization $\Aa_{n} = \Aa_{n-1} C_n$ of the alternating group $\Aa_n$,  where $C_n = \langle (12\dots n)\rangle$. See \cite[Section 8]{mk-ext}. So that the group $C_n \bowtie \Aa_{n-1}$ is isomorphic to $\Aa_n$ and the Drinfeld center of the category $H\modd$ is equivalent to $D(\Aa_n)\modd$, where $D(\Aa_n)$ is the Drinfeld double of the group algebra $k\Aa_n$. Then $\Z(H\modd)$ contains a unique nontrivial Tannakian subcategory $\E$ equivalent, as a braided fusion category, to $\Rep \Aa_n$ \cite[Example 3.3]{core-wgt}.
	
We claim that  if $G$ is a nontrivial finite group then $H\modd$ is not equivalent to a $G$-equivariantization or to a $G$-graded extension of any fusion category $\C$. Indeed, if this were the case, then Propositions 2.9 and 2.10 of \cite{ENO2} would force $G = \Aa_n$. But then $\C \cong \vect$, which is not possible, since $H\modd$ is not pointed and it is not equivalent to $\Rep \Aa_n = (\vect)^G$.
\end{example}

\subsection{$G \bowtie \Gamma$-grading on $\Fun_{\C^{(G, \Gamma)}}(\C)$}
Let $\C$ be a finite $(G, \Gamma)$-crossed tensor category.
As explained in Subsection \ref{ss-fact}, the exact sequence \eqref{c-abelian} gives rise to an equivalence of  $k$-linear categories
\begin{equation}\label{fact}\vect_G \boxtimes \, \C \to \Fun_{\C^{(G, \Gamma)}}(\C)^{op}.\end{equation} Let $(A, \sigma)$ be the induced central algebra of \eqref{c-abelian}. Under the equivalences of tensor categories $\vect_G \cong {}_A(\Rep G)_A$, $\C \cong (\C^{(G, \Gamma)})_A \subseteq {}_A(\C^{(G, \Gamma)})_A$, and  $\Fun_{\C^{(G, \Gamma)}}(\C)^{op} \cong {}_A(\C^{(G, \Gamma)})_A$, the equivalence \eqref{fact} is induced by the tensor product functor:
$$\xymatrix{&{}_A(\Rep G)_A \boxtimes (\C^{(G, \Gamma)})_A \ar[r]^{\qquad \otimes_A} & {}_A(\C^{(G, \Gamma)})_A.}$$

Note that the action $\rho: \underline{G}^{op} \to \Aut_k(\C)$ can be regarded as a $k$-linear action of $\vect_G$ on $\C$, that is, a $k$-linear monoidal functor $\rho: \vect_G \to \Fun_k(\C)^{op}$ (c.f. \cite[Remark 4.3]{DGNO}).

\begin{lemma}\label{emb-vecg} The embedding $\vect_G \to \Fun_{\C^{(G, \Gamma)}}(\C)^{op}$ arising from \eqref{fact} is induced by the functor $\rho: \vect_G \to \Fun_k(\C)^{op}$.
\end{lemma}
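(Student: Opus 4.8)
The plan is to trace through the two parallel descriptions of the embedding $\vect_G \hookrightarrow \Fun_{\C^{(G,\Gamma)}}(\C)^{\op}$ and show they agree. On one side we have the embedding arising from the factorization \eqref{fact}, which identifies $\vect_G$ with the tensor subcategory ${}_A(\Rep G)_A$ of $A$-bimodules in $\C^{(G,\Gamma)}$ under the equivalence $\Fun_{\C^{(G,\Gamma)}}(\C)^{\op} \cong {}_A(\C^{(G,\Gamma)})_A$. On the other side we have the functor $\rho: \vect_G \to \Fun_k(\C)^{\op}$ coming from the $G$-action. Since a $\C^{(G,\Gamma)}$-module endofunctor of $\C$ is in particular a $k$-linear endofunctor, there is a forgetful map $\Fun_{\C^{(G,\Gamma)}}(\C)^{\op} \to \Fun_k(\C)^{\op}$, and the claim is really that the composite $\vect_G \to \Fun_{\C^{(G,\Gamma)}}(\C)^{\op} \to \Fun_k(\C)^{\op}$ coincides with $\rho$.

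First I would make explicit what the image of a homogeneous object $k_g \in \vect_G$ is under the chain of identifications. By Example~\ref{comodh} and Remark~\ref{rmk-g}, the induced central algebra $A$ of \eqref{c-abelian} is $k^G$ as an algebra, and the identification $\vect_G \cong {}_A(\Rep G)_A$ sends $k_g$ to the simple $A$-bimodule supported at $g$. Under the equivalence $\Fun_{\C^{(G,\Gamma)}}(\C)^{\op} \cong {}_A(\C^{(G,\Gamma)})_A$, an $A$-bimodule acts on $\C \cong (\C^{(G,\Gamma)})_A$ by the relative tensor product $-\otimes_A M$. So the task reduces to computing, for the $A$-bimodule $M_g$ corresponding to $k_g$, the underlying $k$-linear endofunctor $X \mapsto X \otimes_A M_g$ on $\C$ and checking that it is naturally isomorphic to $\rho^g$.

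The key computation is to recognize $-\otimes_A M_g$ as $\rho^g$. Here I would use the concrete description of the equivariantization in \eqref{c-abelian}: objects of $\C^{(G,\Gamma)} = \C^G$ are pairs $(X, r)$, and the forgetful functor $F$ realizes $\C$ as $(\C^{(G,\Gamma)})_A$ with $A = R(\1) = k^G$. The half-braiding $\sigma$ on $A$ encodes the $G$-action exactly as in Example~\ref{comodh}, where $\sigma_V$ is the flip in the group case but becomes twisted by $\rho$ in the crossed setting. Tensoring an object over $A$ with the $g$-homogeneous bimodule $M_g$ amounts to projecting onto the $g$-component and applying the autoequivalence encoded by the half-braiding at $g$, which by construction of the crossed action is precisely $\rho^g$. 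The naturality isomorphisms $\rho_2^{g,h}$ then match the bimodule tensor structure, so the functor $\rho: \vect_G \to \Fun_k(\C)^{\op}$ is recovered.

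\textbf{The main obstacle} will be bookkeeping the several layers of identification without losing track of the monoidal (as opposed to merely $k$-linear) structure: the embedding from \eqref{fact} is a priori only asserted to be an embedding of $k$-linear categories, and one must verify that the monoidal structure of $\rho$ (given by $\rho_2^{g,h}: \rho^g\rho^h \to \rho^{hg}$) is the one transported from the bimodule tensor product $\otimes_A$ on ${}_A(\Rep G)_A$. Because the multiplication in $G$ is recorded on the bimodule side by $M_g \otimes_A M_h \cong M_{hg}$ (note the reversal, consistent with $\rho$ being a functor on $\underline{G}^{\op}$), I would track the orientation carefully and confirm that the associativity constraint \eqref{ro-2} is exactly the pentagon for $\otimes_A$ restricted to the $M_g$. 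Once the $k$-linear identification of underlying functors is established, compatibility of monoidal structures should follow from the uniqueness of the factorization in Subsection~\ref{ss-fact} together with the fact that $A = k^G$ carries the flip half-braiding on the $\Rep G$-part, reducing the monoidal check to the classical group case of Remark~\ref{rmk-g}.
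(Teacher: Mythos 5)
Your structural reduction is correct and, in its endgame, matches the paper: the embedding from \eqref{fact} sends $k_g$ to a bimodule $M_g \in {}_A(\Rep G)_A$, and the induced endofunctor of $\C \cong (\C^{(G,\Gamma)})_A$ is $-\otimes_A M_g$, so the lemma amounts to the isomorphism $-\otimes_A M_g \cong \rho^g$. The paper's own proof ends with exactly this kind of identity, $\varphi(\tilde\rho^g) \cong -\otimes_A \rho^g(A) \cong \rho^g$. However, your "key computation" is where the entire content of the lemma sits, and you dispose of it with the heuristic that tensoring with $M_g$ "amounts to projecting onto the $g$-component and applying the autoequivalence encoded by the half-braiding at $g$." This is a genuine gap, not just compressed bookkeeping, because it ignores precisely the crossed structure that makes the statement delicate. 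For $\rho^g$ to be compared with an object of $\Fun_{\C^{(G,\Gamma)}}(\C)^{\op}$ at all, one must first equip it with a $\C^{(G,\Gamma)}$-module functor structure, and this structure is \emph{not} the naive one: on a simple object $X \in \C_t$ the constraint is $r^{t\rhd g}_M \otimes \id_{\rho^g(X)}$ (precomposed with $\gamma^g_{M,X}$), i.e.\ it involves the $\Gamma$-degree $t$ of $X$ and the action $\rhd$ --- see \eqref{equiv-rho}. Your proposal shows no awareness of this twisting; a computation carried out along your heuristic would naturally produce the constraint $r^g_M \otimes \id$, which is wrong in general and would make the claimed isomorphism of module functors fail. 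The paper resolves this by (i) exhibiting the structure \eqref{equiv-rho}, (ii) restricting to the kernel $\K = \KER_F \cong \Rep G$ acting on the trivial objects $\vect \subseteq \C$, where the restricted action $\tilde\rho$ can be matched explicitly against Ostrik's equivalence \eqref{eq-k} from \cite{ostrik}, and (iii) transporting back through $\varphi(R) = -\otimes_A R(A)$. Steps (i) and (ii) are exactly what is missing from your argument; without them your key paragraph is a restatement of the lemma rather than a proof.

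Two smaller points. First, your identification of $M_g$ as "the simple bimodule supported at $g$" needs to be made relative to the \emph{specific} equivalence $\vect_G \cong {}_A(\Rep G)_A$ used in \eqref{fact} (it comes from \cite{mn}); pinning this down is again the role played by Ostrik's theorem in the paper, so it cannot be taken for free. Second, your plan to obtain monoidal compatibility "from the uniqueness of the factorization in Subsection \ref{ss-fact}" has no basis: \eqref{f-gral} is only asserted to be an equivalence of $k$-linear categories, and no uniqueness statement for it is established anywhere in the paper. Fortunately this last point is inessential, since what Theorem \ref{grading-crossed} actually uses is the identification of underlying functors $k_g \boxtimes X \mapsto (-\otimes X)\circ\rho^g$, after which compositions are computed directly with the isomorphisms \eqref{gamma}.
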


\begin{proof} First note that, for every $g \in G$, $\rho^g: \C \to \C$ is a $\C^{(G, \Gamma)}$-module functor with action constraint \begin{equation}\label{equiv-rho}
{c^g}_{(M, r), X} =  r^{t \rhd g}_M \otimes \id_{\rho^g(X)} : \rho^g(M\otimes X) \to M \otimes \rho^g(X), \end{equation}
for every simple object $X \in \C_t$, $t \in \Gamma$. Hence the (right) action $\rho$ corresponds in fact to a tensor functor $\rho: \vect_G \to \Fun_{\C^{(G, \Gamma)}}(\C)^{op}$.

Consider the tensor subcategory $\K = \KER_F \subseteq \C^{(G, \Gamma)}$, where $F:\C^{(G, \Gamma)} \to \C$ is the forgetful functor. Every object $(V, r)$ of $\K$ gives rise to a representation $\pi: G \to \operatorname{GL}(V)$ in the form $\pi(g) = r^g$, $g \in G$. This induces an equivalence of tensor categories $\K \cong \Rep G$ that allows us to identify $\K$ with $\Rep G$ in what follows. We shall also use the canonical identification of the tensor subcategory of trivial objects of $\C$ with the category $\vect$.

The restriction of the forgetful functor $F$ to $\K$ becomes thus identified with the forgetful functor $\Rep G \to \vect$. It follows from \cite[Theorem 4.2]{ostrik} that there is an equivalence of tensor categories 
\begin{equation}\label{eq-k}\vect_G \to \Fun_\K(\vect)^{op},\end{equation} which sends the one-dimensional object $k_g$ graded in degree $g$ to the $\K$-module endofunctor $a_g: \vect \to \vect$ such that $a_g(V) = V \otimes_k k_g = V$,
$$(a^2_g)_{X, M}: a_g(X \otimes M) \to X \otimes a_g(M), \quad x \otimes m \mapsto g. x \otimes m,$$
for all $X \in \K$, $M \in \vect$, $x\in X$, $m \in M$.

In addition $\rho$ restricts to the trivial action (by tensor autoequivalences) $\tilde\rho: G \to \Fun_k(\vect)^{op}$ and $\K \cong \vect^G$. By formula \eqref{equiv-rho} it gives rise to a $k$-linear monoidal functor $\tilde\rho: \vect_G \to \Fun_\K(\vect)^{op}$ such that $\tilde\rho(k_g) = {\tilde\rho}^g$ with $\K$-action constraint
\begin{equation*}
{c^g}_{(V, r), W} =  r^{g}_V \otimes \id_W : V \otimes W \to V \otimes W. \end{equation*}
The previous discussion implies that $\tilde\rho: \vect_G \to \Fun_\K(\vect)^{op}$ coincides with the tensor equivalence \eqref{eq-k}.

\medbreak Finally, using the equivalence of left $\C^{(G, \Gamma)}$-module categories $\C \cong (\C^{(G,  \Gamma)})_A$, we obtain a commutative diagram of tensor functors
$$\xymatrix{
	\vect_G \ar[r]^{\cong} \ar[d]_{=} & {}_A(\K)_A\ar[r]\ar[d]^{\cong} & {}_A(\C^{(G, \Gamma)})_A\ar[d]^{\cong}\\
	\vect_G\ar[r]_{\tilde\rho} & \Fun_\K(\vect)^{op}\ar[r]_{\varphi} & \Fun_{\C^{(G, \Gamma)}}(\C)^{op},}$$
where $\varphi(R) = \, - \otimes_A R(A)$, for all  $R \in \Fun_\K(\vect)$.  We know that $\tilde\rho: \vect_G \to \Fun_\K(\vect)^{op}$ is an equivalence. This implies the lemma since by commutativity of the diagram,
$\varphi({\tilde\rho}^g) \cong  \, - \otimes_A {\tilde\rho}^g(A) = \, - \otimes_A {\rho}^g(A) \cong \rho^g$.
\end{proof}

\begin{theorem}\label{grading-crossed} There is a $G \bowtie \Gamma$-grading on $\Fun_{\C^{(G, \Gamma)}}(\C)^{op}$ with trivial homogeneous component $\C_e$. This grading is faithful if and only if the $\Gamma$-grading of $\C$ is faithful.
\end{theorem}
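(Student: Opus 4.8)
The plan is to use the factorization \eqref{fact}, which gives an equivalence of $k$-linear abelian categories $\vect_G \boxtimes \C \cong \Fun_{\C^{(G,\Gamma)}}(\C)^{\op}$, in order to describe the objects explicitly and then read off the grading directly from the tensor product of module functors. First I would fix notation for two classes of generating $\C^{(G,\Gamma)}$-module endofunctors of $\C$: on one hand the autoequivalences $\rho^g$, $g \in G$, which by Lemma \ref{emb-vecg} realize the embedded copy of $\vect_G$; on the other hand, for each object $X \in \C$, the right-multiplication functor $R_X: Z \mapsto Z \otimes X$, which is a $\C^{(G,\Gamma)}$-module endofunctor because the left $\C^{(G,\Gamma)}$-action on $\C$ (through $F$ and $\otimes$) commutes with right tensoring. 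By the factorization, every object is isomorphic to one of the form $\Psi_{X,g} := R_X \circ \rho^g$, and the simple objects are exactly those with $X \in \Irr(\C)$ and $g \in G$. I would then define $\partial: \Irr(\Fun_{\C^{(G,\Gamma)}}(\C)^{\op}) \to G \bowtie \Gamma$ by $\partial(\Psi_{X,g}) = (g, s)$, where $s \in \Gamma$ is the degree of $X$ in the $\Gamma$-grading of $\C$.

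The heart of the argument is a commutation relation encoding the matched pair. For $X \in \C_s$, the natural isomorphisms $\gamma^g_{Z, X}: \rho^g(Z \otimes X) \to \rho^{s \rhd g}(Z) \otimes \rho^g(X)$ of \eqref{gamma} yield an isomorphism of module endofunctors
\begin{equation*}\rho^g \circ R_X \;\cong\; R_{\rho^g(X)} \circ \rho^{s \rhd g},\end{equation*}
since $\rho^g(Z \otimes X)$ is the value at $Z$ of the left-hand side while $\rho^{s\rhd g}(Z)\otimes \rho^g(X)$ is the value of the right-hand side. I expect the main technical point to be checking that $\gamma^g$ is compatible with the module-functor constraints \eqref{equiv-rho}, so that this is genuinely an isomorphism in $\Fun_{\C^{(G,\Gamma)}}(\C)$; this is exactly where the matched-pair actions $\rhd$, $\lhd$ enter, and it relies on the coherence diagrams (a)--(e) of the crossed action. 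Along the way I would also record $\rho^g(\C_s) = \C_{s \lhd g}$ from \eqref{rho-partial} and the composition constraint $\rho^c \circ \rho^d \cong \rho^{dc}$ supplied by $\rho_2$.

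Combining these ingredients I would compute the tensor product (composition of module functors, read in the order dictated by the opposite category): for $X \in \C_u$ and $Y \in \C_v$,
\begin{equation*}\Psi_{X, a}\otimes\Psi_{Y,b} \;\cong\; \Psi_{\rho^b(X)\otimes Y,\; a(u \rhd b)},\end{equation*}
whose $\C$-part $\rho^b(X) \otimes Y$ lies in $\C_{(u \lhd b)v}$ by the previous paragraph. Hence every composition factor $Z$ of $\Psi_{X,a}\otimes \Psi_{Y,b}$ satisfies $\partial(Z) = (a(u\rhd b), (u\lhd b)v) = (a,u)(b,v)$, which is precisely the multiplication of $G \bowtie \Gamma$ by \eqref{def-gbgamma}. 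By the grading criterion recalled at the end of Section \ref{section-crossed}, $\partial$ therefore defines a $G\bowtie\Gamma$-grading of $\Fun_{\C^{(G,\Gamma)}}(\C)^{\op}$.

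Finally I would identify the neutral component and settle faithfulness. The component of degree $(e,e)$ consists of the objects $\Psi_{X,e}\cong R_X$ with $X \in \C_e$; since $R_X \otimes R_Y \cong R_{X\otimes Y}$ in the opposite category, $X \mapsto R_X$ is a tensor embedding and this component is the tensor subcategory $\cong \C_e$. For faithfulness, note that the component of degree $(g,s)$ contains $\Psi_{X,g}$ with $X \in \C_s$, and because $\rho^g$ is an equivalence this object is nonzero precisely when $\C_s \neq 0$; as $g$ ranges freely over $G$, the $G\bowtie\Gamma$-grading is faithful if and only if $\C_s \neq 0$ for every $s \in \Gamma$, that is, if and only if the $\Gamma$-grading of $\C$ is faithful.
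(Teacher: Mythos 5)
Your proposal is correct and takes essentially the same route as the paper's proof: both identify $\Fun_{\C^{(G,\Gamma)}}(\C)^{op}$ with $\vect_G \boxtimes \C$ via \eqref{fact} and Lemma \ref{emb-vecg}, and compute $(k_g\boxtimes X)\otimes(k_h\boxtimes Y)\cong k_{g(t\rhd h)}\boxtimes(\rho^h(X)\otimes Y)$ from the commutation relation supplied by \eqref{gamma} together with \eqref{rho-partial}, which is exactly your relation $\rho^b\circ R_X\cong R_{\rho^b(X)}\circ\rho^{u\rhd b}$. Your explicit handling of the neutral component and of faithfulness spells out what the paper leaves implicit, but introduces no new idea.
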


\begin{proof} To prove the theorem we shall identify $\mathcal A = \Fun_{\C^{(G, \Gamma)}}(\C)^{op}$ with $\vect_G \boxtimes \C$ as $k$-linear abelian categories under the equivalence \eqref{fact}. For every $g \in G$, $t \in \Gamma$, let $\mathcal A_{(g, t)}$ be the full abelian subcategory generated by objects of the form $k_g \boxtimes X$, $X \in \C_t$. Since the tensor product of $\mathcal A$ is given by composition of endofunctors, using Lemma \ref{emb-vecg} and the isomorphisms \eqref{gamma}, we obtain
\begin{align*}
(k_g \boxtimes X) \otimes (k_h \boxtimes Y) & =  (\, - \otimes Y) \circ \rho^h \circ  (\, - \otimes X) \circ \rho^g   \\
&\cong  (\, - \otimes Y) \circ (\, - \otimes \rho^h(X)) \rho^{t\rhd h}\rho^g \\
& \cong (\, - \otimes \rho^{h}(X) \otimes Y) \circ \rho^{g(t \rhd h)} \\
& = k_{g(t \rhd h)} \boxtimes (\rho^{h}(X) \otimes Y),
\end{align*}
for all $g, h \in G$, $t, s \in \Gamma$, $X \in \C_t$, $Y\in \C_s$.
By condition \eqref{rho-partial}, $\rho^{h}(X) \otimes Y \in \C_{(t \lhd h)s}$. Thus we get that $(k_g \boxtimes X) \otimes (k_h \boxtimes Y) \in \mathcal A_{(g(t \rhd h), (t \lhd h)s)}$. This implies the theorem in view of the definition  of the product in $G \bowtie \Gamma$ (see \eqref{def-gbgamma}).
\end{proof}

Recall that a fusion category is weakly group-theoretical if it is categorically Morita equivalent to a nilpotent fusion category. The class of weakly group-theoretical fusion categories is known to be closed under taking fusion subcategories, dominant images, finite group equivariantizations and graded extensions \cite{ENO2}.

\begin{corollary}\label{wgt-crossed} The category $\C^{(G, \Gamma)}$ is a weakly group-theoretical fusion category if and only if $\C$ is a weakly group-theoretical fusion category.
\end{corollary}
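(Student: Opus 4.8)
The plan is to prove the two implications separately, combining Theorem \ref{grading-crossed} with the closure properties of the class of weakly group-theoretical fusion categories recalled above. The forward implication is essentially immediate: if $\C^{(G, \Gamma)}$ is weakly group-theoretical, then, since the forgetful functor $F: \C^{(G, \Gamma)} \to \C$ in the exact sequence \eqref{c-abelian} is dominant, the category $\C$ is a dominant image of $\C^{(G, \Gamma)}$ and is therefore weakly group-theoretical by closure under dominant images.

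For the converse, assume $\C$ is weakly group-theoretical. I would first note that the neutral component $\C_e$ of the $\Gamma$-grading is a fusion subcategory of $\C$, hence itself weakly group-theoretical by closure under fusion subcategories. By Theorem \ref{grading-crossed}, the category $\mathcal A = \Fun_{\C^{(G, \Gamma)}}(\C)^{op}$ admits a $G \bowtie \Gamma$-grading whose trivial homogeneous component is exactly $\C_e$; thus $\mathcal A$ is a graded extension of $\C_e$, and closure under graded extensions yields that $\mathcal A$ is weakly group-theoretical.

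It then remains to transport this conclusion from $\mathcal A$ to $\C^{(G, \Gamma)}$. By construction $\mathcal A = \Fun_{\C^{(G, \Gamma)}}(\C)^{op}$ is categorically Morita equivalent to $\C^{(G, \Gamma)}$, via the exact indecomposable module category $\C \cong (\C^{(G, \Gamma)})_A$, where $A$ is the induced central algebra of \eqref{c-abelian} (indecomposability following from $\Hom_\C(\1, A) \cong k$). Since categorical Morita equivalence is an equivalence relation and being weakly group-theoretical means precisely being Morita equivalent to a nilpotent fusion category, the property is a Morita invariant, so $\C^{(G, \Gamma)}$ is weakly group-theoretical. The step I expect to be the crux is this converse direction: one cannot argue directly that $\C^{(G, \Gamma)}$ is an equivariantization of $\C$ and appeal to closure under equivariantizations, because the tensor product on $\C^{(G, \Gamma)}$ is twisted by the isomorphisms $\gamma^g_{X, Y}$ of \eqref{gamma} and is not the equivariantization tensor product unless $\Gamma$ is trivial. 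This is exactly what makes the detour through the Morita-equivalent graded category $\mathcal A$ of Theorem \ref{grading-crossed} necessary.
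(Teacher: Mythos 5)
Your overall route coincides with the paper's: both arguments hinge on Theorem \ref{grading-crossed}, the closure properties of weakly group-theoretical fusion categories, and Morita invariance; your forward direction via dominant images of the forgetful functor is a harmless variant of the paper's chain of equivalences. However, there is a genuine gap in your converse direction: you never establish that $\C^{(G,\Gamma)}$, or equivalently $\mathcal A = \Fun_{\C^{(G,\Gamma)}}(\C)^{op}$, is \emph{semisimple}. The corollary is stated for a finite $(G,\Gamma)$-crossed tensor category $\C$, which need not be semisimple, and it asserts an equivalence between being a weakly group-theoretical \emph{fusion} category on the two sides; all the closure properties you invoke (fusion subcategories, graded extensions, Morita invariance) are statements about the class of WGT fusion categories. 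Concretely, when you apply closure under graded extensions to conclude that $\mathcal A$ is WGT, you need to already know that $\mathcal A$ is a fusion category --- Theorem \ref{grading-crossed} only supplies the $G \bowtie \Gamma$-grading, not semisimplicity --- and at the end you need $\C^{(G,\Gamma)}$ itself to be fusion for the definition of weak group-theoreticity even to apply to it.

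This is precisely why the paper's proof opens by citing \cite[Proposition 6.2]{crossed-action}: $\C^{(G,\Gamma)}$ is a fusion category if and only if $\C$ is, which amounts to semisimplicity of equivariantizations in characteristic zero, since $\C^{(G,\Gamma)} = \C^G$ as a $k$-linear abelian category. Once this is in place, $\mathcal A$ is fusion as well --- for instance via the $k$-linear equivalence \eqref{fact}, which gives $\mathcal A \cong \vect_G \boxtimes \C$, or because it is the dual of the fusion category $\C^{(G,\Gamma)}$ with respect to a semisimple indecomposable module category --- and then your chain of closure properties goes through verbatim. So the gap is fillable by a one-line citation, but as written your argument applies theorems about fusion categories to categories not yet known to be fusion. (In your forward direction the issue takes care of itself: a dominant image of a fusion category is automatically fusion, since dominant tensor functors between finite tensor categories send projectives to projectives.)
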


\begin{proof} It follows from \cite[Proposition 6.2]{crossed-action}, $\C^{(G, \Gamma)}$ is a fusion category if and only if $\C$ is a fusion category. By Theorem \ref{grading-crossed} the category $\Fun_{\C^{(G, \Gamma)}}(\C)^{op}$ is a group graded extension of $\C_e$. Hence $\Fun_{\C^{(G, \Gamma)}}(\C)^{op}$ is weakly group-theoretical if and only if $\C_e$ is weakly group-theoretical if and only if $\C$ is weakly group-theoretical.  This implies the statement since  $\C^{(G, \Gamma)}$ is categorically Morita equivalent to $\Fun_{\C^{(G, \Gamma)}}(\C)^{op}$.
\end{proof}

\section{Abelian extensions of tensor categories}\label{section-abelian}

\begin{definition} We shall say that an exact sequence of tensor categories \eqref{exacta-fusion} is an \emph{abelian exact sequence} if its induced Hopf algebra $H$ is finite dimensional and commutative.
\end{definition}
Therefore the induced Hopf algebra of an abelian exact sequence is isomorphic to the dual group algebra $k^G$, for some finite group $G$.

\medbreak 
Examples of abelian exact sequences of tensor categories arise from exact sequences of finite dimensional Hopf algebras with cocommutative cokernel; we discuss these examples in Section \ref{section-hopf}.

\begin{example}\label{g-abelian} Let $G$ be a finite abelian group. Then every exact sequence $(\E): \C' \toto \C \toto \C''$ such that $\C' \cong \Rep G$ is an abelian exact sequence. Indeed, since $k$ is an algebraically closed field, then the simple objects of $\Rep G$ are one-dimensional and the group of invertible objects of $\Rep G$ is isomorphic to the group $\widehat{G}$ of $k$-linear characters of $G$. In addition $X \otimes Y \cong Y \otimes X$, for all objects $X, Y \in \Rep G$. If $H$ is the induced Hopf algebra of $(\E)$, then  $\comod H \cong \Rep G$ and therefore $H$ is finite dimensional cocommutative. Hence $H \cong kG(H)$, where $G(H)$ is the group of group-like elements of $H$. The equivalence $\comod H \cong \Rep G$ gives rise to a group isomorphism $G(H) \cong \widehat G$. Hence the group $G(H)$ is abelian and $H$ is commutative.
\end{example}

\begin{proposition}\label{ind-forg} An exact sequence tensor categories $(\E): \C' \to \C \to \C''$ is abelian if and only if it is equivalent to an exact sequence of the form
\begin{equation}\label{ext-rep}\Rep G \overset{f_0}\toto \C \overset{F_0}\toto \D,
\end{equation}
where $G$ is a finite group, such that there exists a monoidal natural isomorphism $\Hom_{\D}(\1, F_0f_0(-)) \cong U$, where $U: \Rep G \to \vect$ is the forgetful functor.
\end{proposition}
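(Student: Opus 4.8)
The plan is to prove both implications by identifying the induced Hopf algebra $H$ of $(\E)$ and exploiting its relation with the functor $\omega = \Hom_{\C''}(\1, Ff(-))$. Recall that $H = \coend(\omega)$ and that there is an equivalence of tensor categories $\C' \cong \comod H$ under which $\omega$ is monoidally isomorphic to the forgetful functor $U_H : \comod H \to \vect$. Two auxiliary facts will be used. First, the induced Hopf algebra only depends on the equivalence class of the exact sequence: if $(\varphi', \varphi, \varphi'')$ is an equivalence of exact sequences, then the commutativity $F_2 f_2 \varphi' \cong \varphi'' F_1 f_1$, together with the fact that $\varphi''$ preserves $\1$ and induces isomorphisms on $\Hom$ spaces, yields $\omega_1 \cong \omega_2 \circ \varphi'$, and since $\varphi'$ is an equivalence the coends $\coend(\omega_1) \cong \coend(\omega_2)$ are isomorphic as Hopf algebras. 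Second, the standard fact recorded just after the definition of abelian exact sequence, that a finite dimensional commutative Hopf algebra over $k$ is isomorphic to $k^G$ for a finite group $G$, together with the resulting equivalence $\comod{k^G} \cong \Rep G$ which identifies the forgetful functors as monoidal functors to $\vect$.

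For the direct implication, suppose $(\E)$ is abelian, so $H$ is finite dimensional and commutative and hence $H \cong k^G$. Composing the canonical equivalence $\C' \cong \comod H$ with $\comod{k^G} \cong \Rep G$ gives an equivalence $\varphi' : \C' \to \Rep G$ that carries $\omega$ to a functor monoidally isomorphic to the forgetful functor $U : \Rep G \to \vect$. I would then set $\D = \C''$, $F_0 = F$ and $f_0 = f \circ (\varphi')^{-1}$, so that $\Rep G \overset{f_0}{\toto} \C \overset{F_0}{\toto} \D$ is an exact sequence: indeed $f_0$ is still a full embedding with essential image $\KER_F = \KER_{F_0}$, and $F_0$ is unchanged. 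The triple $(\varphi', \id_\C, \id_{\C''})$ is then an equivalence from $(\E)$ to this sequence, the left square commuting up to isomorphism by the very definition of $f_0$. Finally $\Hom_\D(\1, F_0 f_0(-)) = \omega \circ (\varphi')^{-1} \cong U$ by construction.

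For the converse, I would argue directly from invariance: assuming $(\E)$ is equivalent to a sequence $\Rep G \overset{f_0}{\toto} \C \overset{F_0}{\toto} \D$ admitting a monoidal natural isomorphism $\Hom_\D(\1, F_0 f_0(-)) \cong U$, the Hopf algebra $H$ is isomorphic to the induced Hopf algebra of this sequence, which is $\coend(\Hom_\D(\1, F_0f_0(-))) \cong \coend(U)$. Since $\Rep G \cong \comod{k^G}$ with $U$ the forgetful functor, Tannakian reconstruction gives $\coend(U) \cong k^G$ (this is also Example \ref{comodh}), which is finite dimensional and commutative. Hence $H \cong k^G$ and $(\E)$ is abelian.

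The verifications that $f_0$ remains a full embedding onto $\KER_{F_0}$ and that the outer squares of the equivalence commute up to monoidal natural isomorphism are formal consequences of the $\varphi$'s being equivalences, so they are not the crux. The point requiring genuine care, and the main conceptual input, is that all the identifications of fiber functors be monoidal, so that the reconstructed objects agree as Hopf algebras and not merely as coalgebras: the isomorphism $\omega \cong U$ is assumed monoidal, and the equivalence $\comod{k^G} \cong \Rep G$ must intertwine the forgetful functors together with their monoidal structures. Granting these standard compatibilities of Tannakian reconstruction, the argument is complete.
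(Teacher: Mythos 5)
Your proof is correct, and its ``equivalent $\Rightarrow$ abelian'' direction is exactly the paper's argument: invariance of the induced Hopf algebra under equivalence of exact sequences together with $\coend(U) \cong k^G$. The forward direction, however, follows a genuinely different route. The paper does not transport $f$ along the Tannakian equivalence as you do; instead, using that $\C' \cong \comod H$ is finite, it invokes the normal Hopf monad $T$ associated to $(\E)$ and \cite[Theorem 5.8]{tensor-exact} to replace $(\E)$ by the equivalent monadic sequence $\KER_{U_T} \toto (\C'')^T \toto \C''$, and then cites the proof of \cite[Lemma 5.4]{tensor-exact} for an equivalence $\KER_{U_T} \to \comod H = \Rep G$ under which the fiber functor $\Hom_{\C''}(\1, U_T(-))$ \emph{equals} the forgetful functor on the nose. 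So the paper changes the middle term $\C$ into $(\C'')^T$, whereas you keep $\C$ and $\C''$ fixed and replace only $\C'$ by $\Rep G$, setting $f_0 = f \circ (\varphi')^{-1}$. Your approach is more elementary and self-contained: it needs only the reconstruction triangle from Section \ref{section-exact} (the equivalence $\C' \to \comod H$ intertwining $\omega$ with the forgetful functor monoidally), the classification $H \cong k^G$, and the standard fact that a quasi-inverse of a tensor equivalence, together with its unit and counit isomorphisms, can be chosen monoidal --- this last point is the one place where care is genuinely required, since the statement demands that $\Hom_{\D}(\1, F_0f_0(-)) \cong U$ be a \emph{monoidal} isomorphism, and you rightly single it out. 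What the paper's route buys in exchange is that all such monoidality verifications are delegated to results already established in \cite{tensor-exact}, where the relevant identifications hold as strict equalities rather than up to natural isomorphism.
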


\begin{proof} Suppose that $(\E)$ is an exact sequence equivalent to \eqref{ext-rep}. Its induced Hopf algebra is isomorphic to that of \eqref{ext-rep} and therefore it is finite dimensional and commutative, since $\coend (U) \cong k^G$. Conversely, suppose that $(\E)$ is an abelian exact sequence and let $H$ be its induced Hopf algebra, so that $H \cong k^G$ for some finite group $G$. Hence there is a strict isomorphism of tensor categories $\comod H \cong \Rep G$. Moreover, since $\C' \cong \comod H$ is finite, then $(\E)$ is equivalent to the exact sequence $(\E'): \; \KER_{U_T} \toto (\C'')^T \overset{U_T}\toto \C''$, where $T$ is the normal Hopf monad associated to $(\E)$  \cite[Theorem 5.8]{tensor-exact}.
In addition there is an equivalence of tensor categories $f: \KER_{U_T} \to \comod H$ such that $Uf = \Hom_{\C''}(\1, U_T(-))$ (c.f. the proof of \cite[Lemma 5.4]{tensor-exact}), where $U$ is the forgetful functor on $\comod H = \Rep G$. Thus we obtain an exact sequence $(\E''): \; \Rep G \toto (\C'')^T \overset{U_T}\toto \C''$, equivalent to $(\E)$, that induces the forgetful functor on $\Rep G$. 	
This finishes the proof of the proposition.
\end{proof}

\subsection{Perfect abelian exact sequences}\label{perfect-abelian} 
Recall that if $\A = (A, \sigma)$ is a commutative central algebra of $\C$,  a dyslectic right $\A$-module in $\Z(\C)$  is a right
$\A$-module $\mu_{(Z, c)}: (Z, c) \otimes (A, \sigma) \to (Z, c)$ in $\Z(\C)$ satisfying $\mu_{(Z, c)} \; \sigma_Z c_A = \mu_{(Z, c)}$; see Subsection \ref{braid}.

\begin{proposition}\label{char-sym} Let $(\E): \C' \toto \C \toto \C''$ be a perfect exact sequence of tensor categories with  induced central algebra $\A = (A, \sigma)$. Then the following are equivalent:
	
\begin{enumerate}[(i)]
\item $\A$ is a symmetric object of $\Z(\C)$.
\item $\A \otimes \A$ is a dyslectic right $\A$-module in $\Z(\C)$.
\item $(\E)$ is an abelian exact sequence.
\end{enumerate}
\end{proposition}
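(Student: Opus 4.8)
The plan is to prove the chain of equivalences $(iii) \Rightarrow (i) \Rightarrow (ii) \Rightarrow (iii)$, exploiting the dictionary between properties of the induced central algebra $\A = (A, \sigma)$ and properties of the induced Hopf algebra $H$. Recall from Remark \ref{a=h} that $\A$, restricted to $\langle A \rangle \cong \comod H$, is identified with the induced central algebra of the forgetful functor $U : \comod H \to \vect$, which by Example \ref{comodh} is the algebra $H$ with coaction $\Delta$ and half-braiding $\sigma_V(h \otimes v) = v_{(0)} \otimes S(v_{(1)})\, h\, v_{(2)}$. The abelianness condition $(iii)$ says precisely that $H$ is commutative, so the strategy is to translate each of $(i)$ and $(ii)$ into an equivalent statement about $H$ and verify that it matches commutativity.

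First I would establish $(iii) \Rightarrow (i)$. The symmetry condition $c_{\A, \A}^2 = \id_{\A \otimes \A}$ in $\Z(\C)$ is, by definition of the braiding of the Drinfeld center, the equation $\sigma_A \sigma_A = \id_{A \otimes A}$ (where the braiding of two central objects $(A,\sigma)$ with itself is $\sigma_A$). Using the explicit formula for $\sigma$ from Example \ref{comodh}, I would compute $\sigma_A^2$ on $H \otimes H$ and show that it reduces to the identity exactly when $H$ is commutative: the double application of the half-braiding produces a conjugation-type expression involving $S(h_{(1)})\, g\, h_{(2)}$ twice, and cocommutativity of the comultiplication combined with commutativity of the multiplication collapses this to the identity. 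Since $H \cong k^G$ is commutative precisely under the abelian hypothesis, this gives the implication; conversely running the computation backwards will later help, but I route through $(ii)$ instead to close the cycle.

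Next, $(i) \Rightarrow (ii)$: here I would use the general fact recalled in Subsection \ref{braid} that for a commutative algebra $\A$ in the braided category $\Z(\C)$, the dyslecticity condition for the module $\A \otimes \A$ is $\mu \, \sigma_Z c_A = \mu$, which for $Z = A \otimes A$ is governed by the monodromy $c_{\A,\A} c_{\A,\A} = \sigma_A^2$. If $\A$ is symmetric then this monodromy is trivial, so $\A \otimes \A$ is automatically dyslectic as a right $\A$-module — the free module on a symmetric object is always dyslectic. This step should be essentially formal once the symmetry is phrased as triviality of the self-monodromy.

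Finally, for $(ii) \Rightarrow (iii)$ I would again pass to $H$: dyslecticity of $\A \otimes \A = F_A(\A)$, written out via the half-braiding formula, forces a relation on $H$ that is equivalent to $m\, c_{H,H} = m$, i.e. commutativity of $H$ as an algebra in $\comod H$, which (since the coaction is the regular one) is ordinary commutativity of $H$, hence $(iii)$. \textbf{The main obstacle} I anticipate is the bookkeeping in translating dyslecticity, stated for modules over $\A$ in $\Z(\C)$, into the comodule-algebra language of Example \ref{comodh}: one must carefully track how the half-braiding $\sigma$ and its inverse enter the module action $\mu$ on $A \otimes A$ and confirm that the dyslectic equation $\mu_{(Z,c)}\,\sigma_Z c_A = \mu_{(Z,c)}$ corresponds, after using the antipode axioms and counit, exactly to the vanishing of the commutator $m - m\,c_{H,H}$ rather than to some weaker or twisted condition. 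I would handle this by reducing everything to the subcategory $\langle A \rangle \cong \comod H$, where the formulas are explicit, and verifying the equivalence there.
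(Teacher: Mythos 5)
Your proposal is correct and follows essentially the same route as the paper's proof: the same cycle (iii) $\Rightarrow$ (i) $\Rightarrow$ (ii) $\Rightarrow$ (iii), with the Hopf-algebraic computations for (iii) $\Rightarrow$ (i) and (ii) $\Rightarrow$ (iii) carried out in $\langle A \rangle \cong \comod H$ via Remark \ref{a=h} and Example \ref{comodh}, and a formal monodromy-plus-commutativity argument for (i) $\Rightarrow$ (ii). One correction to your sketch of (iii) $\Rightarrow$ (i): you invoke \emph{cocommutativity} of $H$, which is neither available (for $H \cong k^G$ it holds only when $G$ is abelian) nor needed; the computation $\sigma_0^2(a \otimes b) = a_{(1)} \leftharpoonup b_{(2)} \otimes S^2(b_{(1)}) \leftharpoonup a_{(2)}b_{(3)}$ collapses to $a \otimes b$ using commutativity alone, namely triviality of the adjoint action $x \leftharpoonup h = S(h_{(1)})\,x\,h_{(2)}$ together with the consequent identity $S^2 = \id_H$.
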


\begin{proof}
Let $c_{A \otimes A, -}$ denote the half-brading in $A \otimes A$, that is, $c_{A \otimes A, X} = (\sigma_X \otimes \id_A)(\id_A \otimes \sigma_X)$, for all $X \in \C$.
A straightfoward calculation shows that
$$\mu_{A \otimes A} \; \sigma_{A\otimes A} c_{A \otimes A, A} = (\id_A \otimes m \sigma) \; (\sigma^2 \otimes \id_A) \; (\id_A \otimes \sigma).$$	
If $\sigma^2 = \id_{A \otimes A}$, the commutativity of $(A, \sigma)$ in $\Z(\C)$ implies that $$\mu_{A \otimes A} \; \sigma_{A\otimes A} c_{A \otimes A, A} = \mu_{A \otimes A}.$$ Hence (i) $\Longrightarrow$ (ii).

\medbreak
Let $H$ be the induced Hopf algebra of $(\E)$. Since $(\E)$ is perfect, the functor $\C \toto \C''$ has adjoints and therefore $H$ is finite dimensional \cite[Proposition 3.15]{tensor-exact}. 
Observe that a Hopf algebra $H$ is commutative if and only if the adjoint action $\leftharpoonup: H \otimes H \to H$, $x \leftharpoonup h = \Ss(h_{(1)}) x h_{(2)}$ is trivial. In addition, if $H$ is commutative, then $\Ss^2 = \id_H$.

Let $\sigma_0 = \sigma\vert_{\langle A \rangle}$. We can identify $(A, \sigma_0)$ with the induced central algebra of the forgetful functor $U: \comod H \to \vect$ (see Remark \ref{a=h}). Then $A = H$ with the half-braiding $\sigma_0$ described in Example \ref{comodh}, and we have
$$\sigma_0^2 (a \otimes b) = a_{(1)} \leftharpoonup b_{(2)} \otimes \Ss^2(b_{(1)}) \leftharpoonup a_{(2)}b_{(3)}, \quad a, b \in H.$$
Hence we get (iii) $\Longrightarrow$ (i). 	

\medbreak Next, for all $a, b, c \in H$, we have
$$\mu_{A \otimes A} \; (\sigma_0)_{A\otimes A} (c_0)_{A\otimes A, A} (a \otimes b \otimes c) = \Ss(c_{(2)}) a_{(1)} c_{(3)} \otimes \Ss(c_{(4)}) \Ss(a_{(2)}) \Ss^2(c_{(1)}) a_{(3)} b c_{(5)}.$$
If $\A \otimes \A$ is a dyslectic $\A$-module, the last relation implies that the adjoint action of $H$ on itself is trivial, and therefore $H$ is commutative. This shows that (ii) $\Longrightarrow$ (iii) and finishes the proof of the proposition.
\end{proof}

Recall from  \cite[Corollary 4.5]{schauenburg} that there is an equivalence of braided tensor categories $\dys \Z(\C)_\A \to
\Z(\C_A)$ that sends an object  
$(Z, c) \in \dys \Z(\C)_\A$ to the object $(Z, \tilde c) \in \Z(\C_\A)$, where the half-braiding $$\tilde c_{Z, N}: Z
\otimes_A N \to N \otimes_A Z, \quad N \in \C_\A,$$ is
induced by the half-braiding
$c_{Z, N}: Z \otimes N \to N \otimes Z$ by factoring through the canonical projections $Z \otimes N \to Z
\otimes_A N$, $N \otimes Z \to N \otimes_A Z$.

\medbreak 
In what follows $\A = (A, \sigma)$ will denote the induced central algebra of a perfect abelian exact sequence.
By Proposition \ref{char-sym}, $\A \otimes \A$ belongs to the subcategory $\dys \Z(\C)_\A$ of dyslectic $\A$-modules in $\Z(\C)$. Moreover, $\A \otimes \A = F_\A(\A)$ is a symmetric  algebra in $\dys \Z(\C)_\A$.

We can now conclude:

\begin{proposition}\label{def-b} The pair $\Bb = (A \otimes A, \tilde{c})$ is a symmetric commutative algebra in $\Z(\C_A)$,  where $c = c_{A\otimes A, - } =  (\sigma \otimes \id_A)(\id_A \otimes \sigma)$. \qed
\end{proposition}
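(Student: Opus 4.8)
The plan is to combine the two preparatory results that immediately precede this proposition. By Proposition \ref{char-sym}, the fact that $(\E)$ is an abelian (hence perfect) exact sequence means that $\A = (A,\sigma)$ is a symmetric object of $\Z(\C)$, equivalently that $\A \otimes \A$ is a dyslectic right $\A$-module in $\Z(\C)$. Thus $\A \otimes \A = F_\A(\A)$ lands in the braided subcategory $\dys \Z(\C)_\A$, and as noted just above the statement, it is a commutative algebra there (being the image under the monoidal functor $F_\A$ of the commutative algebra $\A$), and in fact a symmetric one. The whole content of the proposition is then to transport this object and its algebra structure across the braided equivalence $\dys \Z(\C)_\A \to \Z(\C_A)$ of \cite[Corollary 4.5]{schauenburg}.

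First I would recall that this equivalence sends $(Z,c) \mapsto (Z,\tilde c)$, where $\tilde c$ is the half-braiding on $\C_A$ obtained from $c$ by factoring through the coequalizer projections $Z \otimes N \to Z \otimes_A N$. Applying it to $\A \otimes \A$ (with its dyslectic half-braiding $c = c_{A\otimes A,-} = (\sigma \otimes \id_A)(\id_A \otimes \sigma)$) yields precisely the object $\Bb = (A \otimes A, \tilde c) \in \Z(\C_A)$. Since the functor is an equivalence of \emph{braided} monoidal categories, it preserves commutative algebras and preserves symmetry (as both notions are defined purely in terms of the braiding and the monoidal structure). Therefore the symmetric commutative algebra $\A \otimes \A$ in $\dys \Z(\C)_\A$ is carried to a symmetric commutative algebra in $\Z(\C_A)$, which is exactly the assertion.

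The only genuine verification needed is that $\A \otimes \A = F_\A(\A)$ really is a symmetric commutative algebra in $\dys \Z(\C)_\A$; but this is already recorded in the paragraph preceding the statement, so in the write-up it suffices to cite it. Hence there is essentially no remaining obstacle: the proposition is a formal consequence of Proposition \ref{char-sym} together with the Schauenburg equivalence, and the proof is the short deduction just sketched (which is why the statement is marked with \qed rather than accompanied by a separate proof block).

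If one wanted to be fully self-contained one could instead verify commutativity and symmetry of $\Bb$ by hand, writing out $\tilde c_{\Bb,\Bb}$ via the coequalizer and checking $m_{\Bb}\,\tilde c_{\Bb,\Bb} = m_{\Bb}$ and $\tilde c_{\Bb,\Bb}^{2} = \id$ from the corresponding identities for $c$ in $\Z(\C)$; but this only reproves naturality of the transport already guaranteed by the equivalence, so I would not pursue it. The efficient route is the functorial one, and I expect no serious difficulty beyond correctly invoking that the equivalence is monoidal and braided.
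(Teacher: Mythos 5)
Your proposal is correct and takes essentially the same route as the paper: the paper likewise deduces the proposition immediately from Proposition \ref{char-sym} (which places $\A \otimes \A = F_\A(\A)$ in $\dys \Z(\C)_\A$ as a symmetric commutative algebra) and then transports it through Schauenburg's braided equivalence $\dys \Z(\C)_\A \to \Z(\C_A)$, which is exactly why the statement carries no separate proof block.
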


We quote the following lemma that will be needed later on; compare with the argument in pp. 343 of \cite{schauenburg}.

\begin{lemma}\label{comm-tildec} Let $Y \in \C_A$. Then the following diagram is commutative:
$$\xymatrix{
& A \otimes Y \ar[r]^{\sigma_Y}\ar[d]_{\id_A \otimes \eta \otimes \id_Y}& Y \otimes A \ar[d]^{\id_Y \otimes \id_A \otimes \eta}\\
	& A \otimes A \otimes Y \ar[d]_{} & Y \otimes A \otimes A \ar[d]^{} \\
	& A \otimes A \otimes_A Y \ar[r]_{\tilde c_Y}& Y \otimes_A A \otimes A.}$$
\end{lemma}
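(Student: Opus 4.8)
The plan is to reduce the commutativity of the diagram to a morphism identity in $\C$ by exploiting the explicit description of the half-braiding $\tilde c$ recalled just before the statement. Write $p\colon A\otimes A\otimes Y\to A\otimes A\otimes_A Y$ and $p'\colon Y\otimes A\otimes A\to Y\otimes_A A\otimes A$ for the two canonical projections that appear as the unlabelled vertical arrows of the diagram. By the equivalence $\dys\Z(\C)_\A\to\Z(\C_A)$ of \cite[Corollary 4.5]{schauenburg}, the half-braiding $\tilde c_Y$ of $\Bb=(A\otimes A,\tilde c)$ at the module $Y$ is the factorization of $c_{A\otimes A,\,Y}$ through $p$ and $p'$, that is, it is characterised by the relation $\tilde c_Y\, p=p'\, c_{A\otimes A,\,Y}$, where by Proposition \ref{def-b} one has $c_{A\otimes A,\,Y}=(\sigma_Y\otimes\id_A)(\id_A\otimes\sigma_Y)\colon A\otimes A\otimes Y\to Y\otimes A\otimes A$. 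The composite along the left and bottom edges of the diagram is $\tilde c_Y\, p\,(\id_A\otimes\eta\otimes\id_Y)$, so substituting the relation above turns it into $p'\, c_{A\otimes A,\,Y}\,(\id_A\otimes\eta\otimes\id_Y)$. It therefore suffices to show that $c_{A\otimes A,\,Y}\,(\id_A\otimes\eta\otimes\id_Y)=(\id_Y\otimes\id_A\otimes\eta)\,\sigma_Y$, since the right-hand side, postcomposed with $p'$, is exactly the composite along the top and right edges.

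First I would compute $c_{A\otimes A,\,Y}\,(\id_A\otimes\eta\otimes\id_Y)$ directly in $\C$. The morphism $\id_A\otimes\eta\otimes\id_Y$ inserts the unit in the middle tensor factor, and the inner half-braiding $\id_A\otimes\sigma_Y$ then braids this copy of $\eta$ past $Y$. The essential ingredient here is the unitality relation $\sigma_Y\,(\eta\otimes\id_Y)=\id_Y\otimes\eta$, which holds because the unit $\eta\colon(\1,\mathrm{triv})\to(A,\sigma)$ is a morphism in $\Z(\C)$ and hence compatible with the half-braidings. Applying it, the composite $(\id_A\otimes\sigma_Y)(\id_A\otimes\eta\otimes\id_Y)$ collapses to $\id_A\otimes\id_Y\otimes\eta\colon A\otimes Y\to A\otimes Y\otimes A$. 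Next I would apply the outer half-braiding $\sigma_Y\otimes\id_A$: since it acts on the first two tensor factors while the inserted $\eta$ sits in the last one, the interchange law gives $(\sigma_Y\otimes\id_A)(\id_{A\otimes Y}\otimes\eta)=\sigma_Y\otimes\eta=(\id_Y\otimes\id_A\otimes\eta)\,\sigma_Y$. Combining the two steps yields the desired identity $c_{A\otimes A,\,Y}\,(\id_A\otimes\eta\otimes\id_Y)=(\id_Y\otimes\id_A\otimes\eta)\,\sigma_Y$, and hence the commutativity of the diagram.

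The computation is short once the Schauenburg relation has reduced everything to an identity of morphisms in $\C$. The only points requiring care are the bookkeeping of tensor positions when pushing the inserted unit $\eta$ through the two factors of $c_{A\otimes A,\,Y}$, and the correct justification of $\sigma_Y(\eta\otimes\id_Y)=\id_Y\otimes\eta$ from the centrality of $(A,\sigma)$, i.e.\ from $\eta$ being a morphism in $\Z(\C)$. I do not expect any further obstacle: the remaining verification is a routine manipulation, and no special feature of the abelian hypothesis (such as $\sigma^2=\id$) is needed for this particular lemma.
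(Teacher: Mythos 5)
Your proof is correct and takes essentially the same route as the paper's: both reduce the diagram to the single identity $c_{A\otimes A,\,Y}\,(\id_A\otimes\eta\otimes\id_Y)=(\id_Y\otimes\id_A\otimes\eta)\,\sigma_Y$ in $\C$, proved from the fact that $\eta$ is a morphism in $\Z(\C)$ together with the interchange law, and then descend through the canonical projections using the defining factorization property of $\tilde c$. Your closing observation that $\sigma^2=\id_{A\otimes A}$ plays no role in this particular computation is likewise consistent with the paper's argument.
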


\begin{proof} Since $\eta$ is a morphism in $\Z(\C)$, we have
\begin{align*}
c_Y \, (\id_A \otimes \eta \otimes \id_Y) & = (\sigma_Y \otimes \id_A) \, (\id_A \otimes \sigma_Y) \, (\id_A \otimes \eta \otimes \id_Y) \\
& = (\sigma_Y \otimes \id_A) \, (\id_A \otimes \id_Y \otimes \eta) \\
& = (\id_Y \otimes \id_A \otimes \eta) \, \sigma_Y.
\end{align*}
This shows that the upper square in the following diagram commutes:
$$\xymatrix{
& A \otimes Y \ar[r]^{\sigma_Y}\ar[d]_{\id_A \otimes \eta \otimes \id_Y}& Y \otimes A \ar[d]^{\id_Y \otimes \id_A \otimes \eta}\\
& A \otimes A \otimes Y \ar[r]^{c_Y} \ar[d]_{} & Y \otimes A \otimes A \ar[d]^{} \\
& A \otimes A \otimes_A Y \ar[r]_{\tilde c_Y}& Y \otimes_A A \otimes A.}$$
On the other hand, the bottom square commutes by definition of $\tilde c$.
Hence the outside rectangle commutes, which proves the lemma. 	
\end{proof}

\section{Abelian extensions $\Rep G \toto \C \toto \D$}\label{ab-ext}

In this section we consider an abelian exact sequence of finite tensor categories  $(\E): \Rep G \toto \C \overset{F}\toto \D$. In what follows we shall identify $(\E)$  with the equivalent exact sequence
\begin{equation}\label{ab-tsec}
\langle A \rangle \toto \C \overset{F_A}\toto \C_A,
\end{equation}
where $(A, \sigma)$ is the induced central algebra.

\subsection{$G$-action on $\C_A$}\label{ss-action}

Recall that $(A, \sigma\vert_{\langle A\rangle})$ is canonically isomorphic to the induced central algebra of the tensor functor $\Hom_{\D}(\1, Ff(-)): \Rep G \to \vect$. By Proposition \ref{ind-forg} the functor $\Hom_{\D}(\1, Ff(-))$ is monoidally isomorphic to the forgetful functor $U: \Rep G \to \vect$. Then, as pointed out in Remark \ref{rmk-g}, there are group isomorphisms
\begin{equation}\label{iso-g}G \cong \Aut_{\otimes}(U) \cong \Aut_\C(A).
\end{equation}
In the sequel we shall indicate by $\alpha_g \in \Aut_\C(A)$, $g \in G$, the automorphism of $A$ corresponding to an element $g \in G$ under the isomorphisms \eqref{iso-g}. Then we have $\alpha_{gh} = \alpha_g \alpha_h$, for all $g, h \in G$.

\medbreak
For every  $g \in G$, let $\rho^g: \C_A \to \C_A$ be the functor defined on an object $\mu_X: X \otimes A \to X$ of $\C_A$ in the form $\rho^g(X) = X$ with right $A$-module structure $$\mu^g_X = \mu_X\; (\id_X \otimes \alpha_g): X \otimes A \to X,$$ and $\rho^g(f) = f$ on a morphism $f$ in $\C_A$. We obtain in this way a strict right action of $G$ on $\C_A$ by $k$-linear autoequivalences, that is, a strict monoidal functor
\begin{equation}\rho: \underline G^{\op} \to \Aut_k (\C_A).
\end{equation}


\begin{remark}
The functors $\rho^g: \C_A \to \C_A$, $g \in G$, are not tensor autoequivalences in general. However, they are $\C$-module functors with respect to the canonical $\C$-module category structure on $\C_A$.
Let $g \in G$. Then $\rho^g(X \ootimes Y) = X \ootimes \rho^g(Y)$, for all $X \in \C$, $Y \in \C_A$. Moreover, the identity morphism $\rho^g \circ \ootimes \to \ootimes \circ (\id_\C \times \rho^g)$ makes $\rho^g: \C_A \to \C_A$ into a $\C$-module functor.

\medbreak Note that $\C_A$ is equivalent to the $\C$-module category $\D$ with action $X \ootimes Y = F(X)\otimes Y$, $X \in \C$, $Y \in \D$. In particular $\C_A$ is an exact indecomposable $\C$-module category.
\end{remark}

\begin{lemma}\label{ff-rho} Let $g \in G$ and suppose that there exists a nonzero object $Y \in \C_A$ such that $\mu^g_Y = \mu_Y$. Then $g = e$.
\end{lemma}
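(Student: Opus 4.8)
The plan is to argue by contradiction: assuming $g \neq e$, I will show that the hypothesis $\mu^g_Y = \mu_Y$ forces $\mu_Y = 0$, which is absurd since unitality of the $A$-module $Y$ gives $\id_Y = \mu_Y(\id_Y \otimes \eta)$, whence $Y = 0$.

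The key is to encode, as an identity of morphisms in $\C$, the elementary fact that translation by a nontrivial element of $G$ has no fixed points. Working inside the tensor subcategory $\langle A \rangle \cong \Rep G$, I identify $A$ with the function algebra $k^G$ (Remark \ref{a=h} and Example \ref{comodh}) and $\alpha_g$ with the translation automorphism $e_h \mapsto e_{gh}$ determined by $g$ under \eqref{iso-g}. Since $k^G$ carries the diagonal comultiplication $\Delta \colon k^G \to k^G \otimes k^G$, $e_h \mapsto e_h \otimes e_h$, which is $G$-equivariant and thus defines a morphism $\Delta \colon A \to A \otimes A$ in $\langle A \rangle \subseteq \C$ satisfying $m \Delta = \id_A$, I can compute $m (\id_A \otimes \alpha_g) \Delta \colon A \to A$ directly: it sends $e_h \mapsto e_h\, e_{gh} = \delta_{g,e}\, e_h$. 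Hence $m(\id_A \otimes \alpha_g)\Delta = 0$ whenever $g \neq e$.

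With this identity in hand, the contradiction follows from a short manipulation using only the associativity and unitality of $\mu_Y$ together with the hypothesis. Namely, I rewrite $\mu_Y = \mu_Y(\id_Y \otimes m\Delta)$ and, pushing the $\alpha_g$ outward through the module-associativity constraints, obtain the chain
$$\mu_Y(\id_Y \otimes m(\id_A \otimes \alpha_g)\Delta) = \mu_Y(\id_Y \otimes \alpha_g)(\mu_Y \otimes \id_A)(\id_Y \otimes \Delta) = \mu_Y(\mu_Y \otimes \id_A)(\id_Y \otimes \Delta) = \mu_Y,$$
where the middle equality is the hypothesis $\mu_Y(\id_Y \otimes \alpha_g) = \mu_Y$ and the outer ones use associativity and $m\Delta = \id_A$. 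Since the left-hand side vanishes for $g \neq e$, this yields $\mu_Y = 0$, and therefore $Y = 0$.

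The main obstacle is the second step above: the naive idempotent decomposition $A \cong \bigoplus_h k e_h$ that makes the fixed-point-freeness transparent is unavailable in $\C$, because the idempotents $e_h$ are not $G$-invariant and so do not define morphisms $\1 \to A$ (indeed $\Hom_\C(\1, A) \cong k$). The device that circumvents this is the diagonal coproduct $\Delta$, which is $G$-equivariant and therefore does live in $\C$; it lets me realize the product $e_h e_{gh}$ of orthogonal idempotents as the genuine morphism $m(\id_A \otimes \alpha_g)\Delta$ of $\C$, thereby transporting the set-theoretic vanishing into a categorical one. Once the identity $m(\id_A \otimes \alpha_g)\Delta = 0$ is established, the remainder is a formal diagram chase in the module axioms, and the exact form of $\alpha_g$ is immaterial: any realization as a fixed-point-free translation yields the same conclusion.
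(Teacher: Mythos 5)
Your proof is correct, but it takes a genuinely different route from the paper's. The paper first \emph{propagates} the hypothesis from the single object $Y$ to every object of $\C_A$: since $\C_A$ is an exact indecomposable $\C$-module category, any $\tilde Y \in \C_A$ admits a monomorphism of $A$-modules $\tilde Y \to X \ootimes Y$ for some $X \in \C$ (citing Proposition 2.6(i) of Etingof--Gelaki), and $\mu^g_{X \ootimes Y} = \mu_{X\ootimes Y}$ is immediate from $\mu^g_Y = \mu_Y$; specializing to $\tilde Y = A$ gives $m^g = m$, whence $\alpha_g = \id_A$ by composing with $\eta \otimes \id_A$, and $g = e$ by injectivity of $g \mapsto \alpha_g$. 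You instead stay with the given $Y$ and exploit the abelianness of the sequence in an essential way: the identification $A \cong k^G$ inside $\langle A \rangle \cong \Rep G$ lets you build the $G$-equivariant diagonal $\Delta$ with $m\Delta = \id_A$ and $m(\id_A \otimes \alpha_g)\Delta = 0$ for $g \neq e$ (your translation-by-$g$ formula for $\alpha_g$ is off by a convention, but as you note, only fixed-point-freeness matters), after which module associativity, the hypothesis, and the unit axiom force $\mu_Y = 0$ and hence $Y = 0$. Each approach has its merits: yours is self-contained at the level of $Y$ and avoids module-category machinery and the external citation; the paper's is structural — it never uses that $A \cong k^G$, so it proves the stronger statement that any $\alpha \in \Aut_\C(A)$ fixing one nonzero module action is the identity, needing only that $\C_A$ is exact and indecomposable, with abelianness entering solely through the isomorphism $G \cong \Aut_\C(A)$. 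One small point common to both arguments: the paper's definition of right $A$-module omits the unit axiom $\mu_Y(\id_Y \otimes \eta) = \id_Y$, but it is clearly intended (the lemma is false for non-unital modules, e.g.\ $\mu_Y = 0$), so your appeal to it is legitimate.
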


\begin{proof} The assumption implies that $\mu_{X \ootimes Y}^g = \mu_{X \ootimes Y}$, for all objects $X \in \C$. Let now $\tilde Y$ be any object of $\C_A$ with $A$-action $\mu_{\tilde Y} : \tilde Y \otimes A \to A$. Since $\C_A$ is an exact  indecomposable  $\C$-module category, there exist an object $X \in \C$ and a monomorphism of right $A$-modules $i: \tilde Y \to X \ootimes Y$; see \cite[Proposition 2.6(i)]{eg-emc}.
Then
\begin{align*} i\mu_{\tilde Y} & = \mu_{X \ootimes Y} (i \otimes \id_A)\\
& = \mu_{X \ootimes Y}^g (i \otimes \id_A) \\
& = \mu_{X \ootimes Y} (i \otimes \id_A) (\id_{\tilde Y} \otimes \alpha_g)\\
& = i\mu_{\tilde Y} (\id_{\tilde Y} \otimes \alpha_g)\\
& = i\mu_{\tilde Y}^g,
\end{align*}
and thus $\mu_{\tilde Y}^g = \mu_{\tilde Y}$, since $i$ a monomorphism.
In particular, taking $\tilde Y = A$ with right $A$-module structure given by the multiplication map $m: A \otimes A \to A$, we get $m = m^g$. Then
$\id_A = m(\eta \otimes \id_A) = m^g (\eta \otimes \id_A) = m (\eta \otimes \id_A) \alpha_g = \alpha_g$.
Hence $g = e$, as claimed.
\end{proof}

\subsection{$\Gamma$-grading on $\C_A$}\label{ss-grading}
Recall from Proposition \ref{def-b} that  $\Bb = (A \otimes A, \tilde{c})$ is a commutative algebra in $\Z(\C_A)$, where $c = c_{A\otimes A, - } =  (\sigma \otimes \id_A)(\id_A \otimes \sigma)$.

\medbreak
We have $A \otimes A = \U(\Bb)$, where $\U: \Z(\C_\A) \to \C_A$ is the forgetful functor. Note that $A \otimes A = F_A(A)$ is an algebra in $\C_A$ with multiplication $m_{A\otimes A}: A \otimes A \otimes_A A \otimes A \to A\otimes A$ induced by the multiplication of $A$ in the form
$m_{A \otimes A} = (m \otimes \id_A)(F_A^2)_{A, A}$. That is, $m_{A\otimes A}$ is obtained from the morphism
\begin{equation}(m  \otimes m) (\id_A \otimes \sigma \otimes \id_A):
A \otimes A \otimes A \otimes A \to A \otimes A,
\end{equation}
by factoring through the epimorphism $A \otimes A \otimes A \otimes A \to A \otimes A \otimes_A A \otimes A$.

\medbreak
Let $(Z, c) \in \Z(\C_A)$ and suppose that $\U(Z, c) = Z$ is a trivial object of $\C_A$. Then $(Z, c)$ is endowed with a \emph{trivial half-braiding} $\tau: - \otimes_A Z \to Z \otimes_A -$, that is, a natural isomorphism $$\tau_{X}: Z \otimes_A X \to X \otimes_A Z,\quad X \in \C_A.$$
Let $X$ be a simple object  of $\C_A$. Then $X$ gives rise to a natural automorphism of $Z\otimes_A X$ in the form
\begin{equation}\label{autx}
Z \otimes_A X \overset{\tau_{X}}\toto X \otimes_A Z \overset{c_{X}^{-1}}\toto Z \otimes_A X,\end{equation}
Since $X$ is  simple and $Z$ is a trivial object of $\C_A$, then  \eqref{autx} corresponds to an automorphism $\partial(X)_{(Z,c)}$ of $Z$ such that
the following diagram is commutative:
\begin{equation}\label{diag-partial}\xymatrix@C=0.75in @R=0.45in{Z \otimes_A X \ar[r]^{\partial(X)_{(Z, c)}\otimes_A \id_X} \ar[rd]_{\tau_{X}} & Z \otimes_A X \ar[d]^{c_{X}}\\
	& X \otimes_A Z.}
\end{equation}
Moreover, this correspondence is natural in $(Z, c) \in \KER_{\U}$. 
The commutativity of \eqref{diag-partial} implies that
\begin{equation}\label{mult}
\partial(X)_{(Z, c) \otimes (Z', c')} = \partial(X)_{(Z, c)} \otimes_A \partial(X)_{(Z', c')}, \quad (Z, c), (Z', c') \in \KER_\U.\end{equation}

\medbreak
Since $A \otimes A$ is a trivial object of $\C_A$, we obtain from the previous discussion an automorphism $\partial(X) = \partial(X)_{\Bb}$ of $A \otimes A$.
Equation \eqref{mult} combined with the naturality of $\partial(X)_{(Z, c)}$ in $(Z, c)$ and the fact that $\U$ is a strict monoidal functor imply that $\partial(X)$ is an algebra automorphism of $A \otimes A$.

\medbreak
Let $\Aut_{A}(A\otimes A)$ denote the group of algebra automorphisms of $A \otimes A$ in $\C_A$, which is a finite group.

\begin{proposition}\label{gd-tilde} The map $\partial: \Irr(\C_A) \to \Aut_{A}(A\otimes A)$ defines an $\Aut_{A}(A\otimes A)$-grading on $\C_A$.
\end{proposition}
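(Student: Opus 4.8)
The plan is to verify the hypothesis of the grading criterion recalled in Section~\ref{section-crossed}: it suffices to show that for any two simple objects $X, Y \in \C_A$ and any composition factor $W$ of $X \otimes_A Y$, one has $\partial(W) = \partial(X)\partial(Y)$ in $\Aut_A(A\otimes A)$. Write $Z = A\otimes A$ and, for every object $W \in \C_A$, let $\phi_W = c_W^{-1}\tau_W \colon Z\otimes_A W \to Z\otimes_A W$ be the automorphism appearing in \eqref{autx}, where $c = \tilde c$ is the half-braiding of $\Bb = (A\otimes A,\tilde c)$ and $\tau$ is the trivial half-braiding of $Z$. The family $\phi = (\phi_W)_{W\in\C_A}$ is a natural automorphism of the functor $Z\otimes_A(-)\colon\C_A\to\C_A$, and by definition of $\partial$ (diagram \eqref{diag-partial}) we have $\phi_X = \partial(X)\otimes_A\id_X$ whenever $X$ is simple.

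First I would compute $\phi_{X\otimes_A Y}$ for simple $X, Y$ using that both $\tau$ and $c$ are half-braidings, hence satisfy the factorization $c_{X\otimes_A Y} = (\id_X\otimes_A c_Y)(c_X\otimes_A\id_Y)$ and likewise for $\tau$. This gives
\begin{equation*}
\phi_{X\otimes_A Y} = (c_X^{-1}\otimes_A\id_Y)\,(\id_X\otimes_A\phi_Y)\,(\tau_X\otimes_A\id_Y).
\end{equation*}
Substituting $\phi_Y = \partial(Y)\otimes_A\id_Y$, the middle arrow becomes $\partial(Y)$ acting on the central copy of $Z$ in $X\otimes_A Z\otimes_A Y$. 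The key point is then to move this automorphism of $Z$ through $\tau_X\otimes_A\id_Y$: since $Z$ is a trivial object and $\tau$ is its trivial half-braiding, every endomorphism of $Z$ in $\C_A$ is automatically a morphism $(Z,\tau)\to(Z,\tau)$ in $\Z(\C_A)$ (the forgetful functor is full on the subcategory of trivial objects carrying the trivial half-braiding, which is a copy of $\vect$), so $\partial(Y)$ commutes with $\tau$ in the required sense. Using this together with $\phi_X = \partial(X)\otimes_A\id_X$ I obtain
\begin{equation*}
\phi_{X\otimes_A Y} = (\partial(X)\otimes_A\id_{X\otimes_A Y})\,(\partial(Y)\otimes_A\id_{X\otimes_A Y}) = (\partial(X)\partial(Y))\otimes_A\id_{X\otimes_A Y}.
\end{equation*}

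Finally I would pass to an arbitrary composition factor $W$ of $X\otimes_A Y$. Because the tensor product of $\C_A$ is exact, $Z\otimes_A(-)$ is exact; applying it to a composition series of $X\otimes_A Y$ and using the naturality of $\phi$ with respect to the inclusions and quotients in that series shows that $\phi$ acts on each subquotient $Z\otimes_A W$ as $\phi_W = \partial(W)\otimes_A\id_W$. On the other hand $\phi_{X\otimes_A Y} = (\partial(X)\partial(Y))\otimes_A\id$ acts on the same subquotient as $(\partial(X)\partial(Y))\otimes_A\id_W$; comparing, and using that $Z$ is trivial while $W$ is simple (so that $\lambda\otimes_A\id_W$ determines $\lambda\in\Aut_A(Z)$), I conclude $\partial(W) = \partial(X)\partial(Y)$. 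The grading criterion then yields the asserted $\Aut_A(A\otimes A)$-grading. I expect the main obstacle to be the middle step: justifying rigorously that an algebra automorphism of the trivial object $Z$ commutes with the trivial half-braiding $\tau_X$ for every $X$, which is precisely what makes the hexagon computation collapse to a clean product; everything else is naturality and exactness bookkeeping.
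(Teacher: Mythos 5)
Your proposal is correct and follows essentially the same route as the paper: the paper's proof consists precisely of the diagram expressing $\tau_{X_1\otimes_A X_2} = c_{X_1\otimes_A X_2}\,(\partial(X_1)\partial(X_2)\otimes_A \id)$, justified by the half-braiding conditions on $\tau$ and $c$ together with naturality of $\tau$ (your observation that endomorphisms of the trivial object $Z$ intertwine the trivial half-braiding), followed by the passage to composition factors and the grading criterion of Section \ref{section-crossed}. Your write-up merely makes explicit the hexagon manipulation, the centrality of $\partial(Y)$ with respect to $\tau$, and the exactness/naturality bookkeeping that the paper leaves implicit.
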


\begin{proof} 	
Let $X_1, X_2$ be simple objects of $\C_A$. The naturality of $\tau$ combined with the half-braiding condition on $\tau$ and $c$ imply the commutativity of the following diagram
$$\xymatrix@C=1.5in @R=0.5in{A \otimes A \otimes_A X_1 \otimes_A X_2  \ar[r]^{\partial(X_1)\partial(X_2) \otimes_A \id_{X_1 \otimes_A X_2}} \ar[rd]_{\tau_{A \otimes A, X_1\otimes_A X_2}\; } & A \otimes A \otimes_A X_1 \otimes_A X_2 \ar[d]^{c_{A \otimes A, X_1 \otimes_A X_2}}\\
 & X_1 \otimes_A X_2 \otimes_A A \otimes A.}$$
Then $\partial (X_0) = \partial (X_1)\partial (X_2)$, for every composition factor $X_0$ of $X_1\otimes_A X_2$. This implies the proposition. \end{proof}

In what follows we shall denote by $\Gamma \subseteq \Aut_{A}(A\otimes A)$ the \emph{support} of $\C_A$ with respect to the grading in Proposition \ref{gd-tilde}. That is, $\Gamma = \{s \in \tilde \Gamma:\; (\C_A)_s \neq 0 \}$. Thus $\C_A$ is faithfully graded by the group $\Gamma$.

\subsection{Permutation action $\triangleright: \Gamma \times G \to G$.}\label{def-rhd}

Recall that the forgetful functor $\C_A \to \C$ is right adjoint of the functor $F_A : \C \to \C_A$. For every $X \in \C$, $Y \in \C_A$, the natural isomorphism $\phi_{X, Y}: \Hom_\C(X, Y) \to \Hom_A(X\otimes A, Y)$ is given by
\begin{equation*} \phi_{X, Y}(T) = \mu_Y (\alpha \otimes \id_A), \quad  \alpha \in \Hom_\C(X, Y),
\end{equation*}
whose inverse $\psi_{X, Y}: \Hom_A(X\otimes A, Y) \to \Hom_\C(X, Y)$ is defined as
\begin{equation*}\psi_{X, Y}(b) = b (\id_X \otimes \eta), \quad b \in \Hom_A(X\otimes A, Y).
\end{equation*}

\medbreak
Thus $\Hom_A(A\otimes A, A)$ is a $k$-algebra with multiplication $*$ induced by composition in $\Hom_\C(A, A)$, that is,
\begin{equation}\label{mult-alg}
b * b' = m \left(b(\id_A \otimes \eta) \; b' (\id_A \otimes \eta) \otimes \id_A\right), \quad b, b' \in \Hom_A(A\otimes A, A).
\end{equation}

\begin{lemma}\label{gp-iso} The adjunction isomorphism $\psi_{A, A}: \Hom_A(A\otimes A, A) \to \Hom_\C(A, A)$ induces a bijection $\operatorname{Alg}_A(A\otimes A, A) \cong \Aut_\C(A)$.
\end{lemma}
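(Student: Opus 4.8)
The plan is to show that the adjunction bijection $\psi_{A,A}\colon \Hom_A(A\otimes A,A)\to\Hom_\C(A,A)$, together with its inverse $\phi_{A,A}(f)=m(f\otimes\id_A)$, restricts to a bijection between the subset $\Alg_A(A\otimes A,A)$ of algebra morphisms and the set $\Aut_\C(A)$ of algebra automorphisms of $A$ in $\C$. Recall that $A\otimes A=F_A(A)$ carries the algebra structure with unit $\eta\otimes\id_A$ and multiplication $m_{A\otimes A}=(m\otimes\id_A)(F_A^2)_{A,A}$, while $A=\1_{\C_A}$ is the unit object of $\C_A$, hence the trivial (unit) algebra. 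First I would verify that a right $A$-module morphism $b\colon A\otimes A\to A$ is an algebra morphism if and only if $f=\psi_{A,A}(b)=b(\id_A\otimes\eta)$ is an algebra endomorphism of $A$ in $\C$; then I would invoke that, as recorded in Subsection~\ref{repg}, every algebra endomorphism of $A$ is invertible, so that $\Alg_\C(A,A)=\Aut_\C(A)$.

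For the unit axiom, write $b=\phi_{A,A}(f)=m(f\otimes\id_A)$. Since the unit of $A\otimes A$ is $\eta\otimes\id_A$ and the unit of the target algebra $\1_{\C_A}=A$ is $\id_A$, the morphism $b$ preserves units iff $b(\eta\otimes\id_A)=\id_A$, i.e. $m(f\eta\otimes\id_A)=\id_A$. Using the right unit axiom $m(\id_A\otimes\eta)=\id_A$ this is readily seen to be equivalent to $f\eta=\eta$, that is, to $f$ preserving the unit of $A$.

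The multiplicativity condition is the technical heart, and the step I expect to be most delicate. To check it I would precompose both sides of the algebra-morphism square with the canonical epimorphism $p\colon (A\otimes A)\otimes(A\otimes A)\to (A\otimes A)\otimes_A(A\otimes A)$, turning the identity to be proved into an equality of morphisms $A^{\otimes 4}\to A$ in $\C$. On one side, $m_{A\otimes A}\,p=(m\otimes m)(\id_A\otimes\sigma\otimes\id_A)$ by definition, so that $b\,m_{A\otimes A}\,p=m(f\otimes\id_A)(m\otimes m)(\id_A\otimes\sigma\otimes\id_A)$; on the other, the definition of $b\otimes_A b$ together with the fact that the unit-object multiplication $A\otimes_A A\cong A$ composed with the projection $A\otimes A\to A\otimes_A A$ equals $m$ gives $m(b\otimes b)=m\bigl(m(f\otimes\id_A)\otimes m(f\otimes\id_A)\bigr)$. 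The equivalence of these two expressions with the relation $fm=m(f\otimes f)$ then follows by bookkeeping with the associativity and unit axioms of $m$, the naturality of the half-braiding $\sigma$ with respect to $f$ (so that $(f\otimes\id_A)\sigma_A=\sigma_A(\id_A\otimes f)$), and the commutativity relation $m\sigma=m$. Tracking $f$ through the coequalizer and the half-braiding is where the care is needed.

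Conceptually, this entire verification is subsumed by a single structural remark, which I would use to double-check the computation: since $(A,\sigma)$ is a commutative central algebra, $F_A\colon\C\to\C_A$ is a strong monoidal functor whose right adjoint, the forgetful functor $U\colon\C_A\to\C$, is correspondingly lax monoidal, and $U(\1_{\C_A})$ recovers the algebra $(A,m,\eta)$. The adjunction $F_A\dashv U$ is therefore monoidal and restricts to monoids, yielding a bijection $\Alg_A(F_A(A),\1_{\C_A})\cong\Alg_\C(A,U(\1_{\C_A}))=\Alg_\C(A,A)$ which is precisely the restriction of $\psi_{A,A}$. Either route produces the desired bijection $\Alg_A(A\otimes A,A)\cong\Aut_\C(A)$.
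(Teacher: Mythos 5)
Your proof is correct and follows essentially the same route as the paper: the paper's own proof simply asserts that it is "straightforward to verify" that $b$ is an algebra morphism in $\C_A$ if and only if $\psi_{A,A}(b)$ is an algebra morphism in $\C$, and then, exactly as you do, invokes the observation from Subsection \ref{repg} that every algebra endomorphism of $A$ in $\C$ is an automorphism. Your fleshed-out verification (and the monoidal-adjunction reformulation, which is a clean way to package it) fills in precisely the step the paper leaves implicit.
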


In particular, $\operatorname{Alg}_A(A\otimes A, A)$ is a group with multiplication defined by \eqref{mult-alg}.

\begin{proof} It is straightforward to verify that a morphism $b: A\otimes A \to  A$ is an algebra morphism in $\C_A$ if and only if $\psi_{A, A}(b): A \to A$ is an algebra morphism in $\C$. This implies the lemma since, as observed in Subsection \ref{repg}, every algebra morphism of $A$ in $\C$ is an automorphism.
\end{proof}

Let $g \in G$. We shall denote by $p_g = \phi(\alpha_g): A \otimes A \to A$ the morphism in $\C_A$ corresponding to $\alpha_g \in \Aut_{\C}(A)$. Thus we have a commutative diagram in $\C$:
\begin{equation}\xymatrix@C=0.75in @R=0.45in{A \ar[r]^{\id_A \otimes \eta \quad } \ar[rd]_{\alpha_g} & A \otimes A \ar[d]^{p_g}\\ & A.}
\end{equation}

Notice that the group $\Gamma \subseteq \Aut_A(A\otimes A)$ acts on $\operatorname{Alg}_A(A\otimes A, A)$ in the form
\begin{equation}
s . b = b s^{-1}: A \otimes A \to A, \qquad s\in \Gamma, b \in \operatorname{Alg}_A(A\otimes A, A).
\end{equation}

Let $g \in G$. It follows from Lemma \ref{gp-iso}, that for all $s \in \Gamma$, there exists a unique element of $G$, that we denote $s \rhd g$, such that $s . p_g = p_{s \rhd g}$. This gives rise to an action by permutations
\begin{equation} \rhd: \Gamma \times G \to G,
\end{equation}
which is determined by the commutativity of the diagrams
\begin{equation}
\xymatrix{A \otimes A \ar[r]^{s} \ar[rd]_{p_{s^{-1} \rhd g}} & A \otimes A \ar[d]^{p_g}\\ & A,}
\end{equation}
for all $s \in \Gamma$, $g \in G$.

\medbreak Observe that the group $\Aut_{\Z(\C)}(A, \sigma)$ can be regarded as a subgroup of $\Aut_{\C}(A) \cong G$ in a canonical way. The action $\rhd: \Gamma \times G \to G$ gives some control on the obstruction to have an equality $\Aut_{\Z(\C)}(A, \sigma) = \Aut_{\C}(A)$. More precisely, we have:

\begin{proposition}\label{a-s} Let $g \in G$ and $s \in \Gamma$. Then for every object $Y \in (\C_A)_s$ the following diagram is commutative:
\begin{equation}\label{alfa-sigma}
\xymatrix@C=0.6in @R=0.45in{
A \otimes Y \ar[d]_{\alpha_{s\rhd g} \otimes \id_Y} \ar[r]^{\sigma_Y}
& Y\otimes A \ar[r]^{\id_Y \otimes \alpha_g}  & Y \otimes A \ar[d]^{\mu_Y}\\
A \otimes Y \ar[r]_{\sigma_Y} & Y \otimes A \ar[r]_{\mu_Y} & Y.}\end{equation}
\end{proposition}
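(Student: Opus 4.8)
The plan is to recast the identity \eqref{alfa-sigma}, which is stated in $\C$, as an identity between morphisms in $\C_A$ built from the half-braiding $\tilde c$ of $\Bb = (A\otimes A,\tilde c)$, and then to feed in the defining relation of the grading degree $\partial(Y) = s$ together with Lemma \ref{comm-tildec}, so that everything collapses to the defining relation $p_{s\rhd g} = p_g\,s^{-1}$ of the action $\rhd$. Throughout I write $\lambda_Y = \mu_Y\sigma_Y\colon A\otimes Y\to Y$ for the left $A$-action, and I recall from Subsection \ref{def-rhd} that $p_g(\id_A\otimes\eta) = \alpha_g$. Let $j_L\colon A\otimes Y\to A\otimes A\otimes_A Y$ and $j_R\colon Y\otimes A\to Y\otimes_A A\otimes A$ denote the two vertical composites of the diagram in Lemma \ref{comm-tildec}, i.e. the canonical projections precomposed with $\id_A\otimes\eta\otimes\id_Y$ and with $\id_Y\otimes\id_A\otimes\eta$. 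Since the module constraints $A\otimes_A Y\cong Y$ and $Y\otimes_A A\cong Y$ are induced by $\lambda_Y$ and $\mu_Y$, and since $p_g$ is a morphism in $\C_A$ (hence a bimodule map, by naturality of $\sigma$), the relation $p_g(\id_A\otimes\eta) = \alpha_g$ yields two \emph{evaluation identities}, valid for every $h\in G$: first, $(\id_Y\otimes_A p_g)\,j_R = \mu_Y(\id_Y\otimes\alpha_g)$, and second, $(p_h\otimes_A\id_Y)\,j_L = \lambda_Y(\alpha_h\otimes\id_Y)$.

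With these in hand the computation proceeds as a chain, in which I freely use the identifications $A\otimes_A Y\cong Y\cong Y\otimes_A A$:
\begin{align*}
\mu_Y(\id_Y\otimes\alpha_g)\,\sigma_Y
&= (\id_Y\otimes_A p_g)\,j_R\,\sigma_Y
= (\id_Y\otimes_A p_g)\,\tilde c_Y\,j_L \\
&= (\id_Y\otimes_A p_g)\,\tau_Y\,(s^{-1}\otimes_A\id_Y)\,j_L
= (p_g\otimes_A\id_Y)\,(s^{-1}\otimes_A\id_Y)\,j_L \\
&= (p_{s\rhd g}\otimes_A\id_Y)\,j_L
= \mu_Y\,\sigma_Y\,(\alpha_{s\rhd g}\otimes\id_Y).
\end{align*}
The second equality is Lemma \ref{comm-tildec} in the form $j_R\,\sigma_Y = \tilde c_Y\,j_L$; the third substitutes the defining diagram \eqref{diag-partial} of $\partial(Y)=s$, written as $\tilde c_Y = \tau_Y(s^{-1}\otimes_A\id_Y)$, where $\tau$ is the trivial half-braiding on the trivial object $A\otimes A$; the fifth is $p_{s\rhd g} = p_g\,s^{-1}$; and the outer equalities are the two evaluation identities. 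The crux is the fourth equality, $(\id_Y\otimes_A p_g)\,\tau_Y = p_g\otimes_A\id_Y$.

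I expect this fourth step to be the main obstacle, as it is the one that genuinely uses the triviality of $\tau$. It expresses the naturality of the trivial half-braiding with respect to morphisms of trivial objects: $p_g\colon A\otimes A\to A$ is a morphism in $\C_A$ between the trivial object $A\otimes A$ and the unit $A$, and I would argue that any such morphism is automatically a morphism $(A\otimes A,\tau)\to(A,\tau^A) = \1$ in $\Z(\C_A)$, where $\tau^A$ is the half-braiding of the unit. Concretely, since $\End_{\Z(\C_A)}(\1)\cong\End_{\C_A}(A)\cong k$, the trivial objects equipped with their trivial half-braidings form a full subcategory of $\Z(\C_A)$ equivalent to $\vect$, so every $\C_A$-morphism between them lifts to a central one; the centrality condition applied to $Y$ is exactly the asserted identity. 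The remaining difficulty here is purely bookkeeping — tracking the projections $A\otimes A\otimes Y\to A\otimes A\otimes_A Y$ and the module constraints when passing between $\otimes$ and $\otimes_A$ — which is encapsulated by the two evaluation identities.

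Finally, one must address that \eqref{diag-partial} provides $\tilde c_Y(s\otimes_A\id_Y) = \tau_Y$ only for \emph{simple} $Y$, whereas the statement is for arbitrary $Y\in(\C_A)_s$. The class of $Y$ satisfying this relation is readily seen to be closed under subobjects, quotients and direct sums (using that $A\otimes A\otimes_A(-)$ and $(-)\otimes_A A\otimes A$ are exact and that all the relevant morphisms are natural in $Y$); the delicate point is closure under extensions, since two natural transformations agreeing on a sub- and a quotient-object may differ by an off-diagonal term on the middle object. I would close this gap by exploiting the extra structure available, namely the multiplicativity \eqref{mult} and the naturality of $\partial(X)_{(Z,c)}$ in $(Z,c)\in\KER_\U$, to force the off-diagonal term to vanish, thereby propagating the relation to the whole homogeneous component $(\C_A)_s$. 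This propagation, together with the trivial-half-braiding step, is where the real care lies; the displayed chain is the conceptual core of the argument.
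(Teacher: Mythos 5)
Your core computation is correct and is essentially the paper's own proof in linearized form: the paper establishes the same identity by pasting two commutative diagrams, using exactly your ingredients --- Lemma \ref{comm-tildec}, the grading relation $\tilde c_Y=\tau_Y(s^{-1}\otimes_A\id_Y)$, naturality of the flip $\tau$ with respect to the $\C_A$-morphism $p_g$ (your fourth equality, which the paper disposes of with the words ``by naturality of $\tau$''), the identity $p_{s\rhd g}=p_g\,s^{-1}$ defining $\rhd$, and $p_g(\id_A\otimes\eta)=\alpha_g$. Your two ``evaluation identities'' are precisely the bookkeeping squares of the paper's large diagram, and your justification of the fourth step via $\End_{\C_A}(A)\cong k$ is sound.

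The one place where you part company with the paper is your final paragraph. The paper simply asserts, for an arbitrary $Y\in(\C_A)_s$, that $\tilde c_Y\,\tau_{Y,A\otimes A}=\id_Y\otimes_A s^{-1}$ ``by definition of the grading'', and offers no simple-to-general argument; you are right that, strictly speaking, \eqref{diag-partial} defines $\partial$ only on simple objects, while $(\C_A)_s$ is the Serre closure of its simples, so the relation for non-simple $Y$ does require propagation, and extensions are the genuine sticking point (naturality alone handles subobjects, quotients and direct sums). However, the repair you sketch would not work: \eqref{mult} and the naturality of $\partial(X)_{(Z,c)}$ in $(Z,c)\in\KER_\U$ express multiplicativity and functoriality in the central object $(Z,c)$, not in $Y$, so they give no hold on the off-diagonal term produced by an extension in the $Y$-variable. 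The natural tool is instead multiplicativity in $Y$: the half-braiding axioms for $\tilde c$ and for $\tau$ show that if $Y_1$, $Y_2$ satisfy the relations for degrees $t$ and $s$, then $Y_1\otimes_A Y_2$ satisfies it for $ts$; combined with closure under subquotients this propagates the relation along tensor products and their subquotients --- though even this does not obviously reach, say, projective covers, which are themselves iterated extensions of simples. So on this point your write-up is no more complete than the paper's (which takes the relation as read), and the specific mechanism you propose is the wrong one; everything else coincides with the paper's argument.
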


\begin{proof} Recall that $\Bb = \A \otimes \A = (A \otimes A, \tilde c) \in \Z(\C_A)$, where $c = c_{A \otimes A, -} = (\sigma \otimes \id_A)(\id_A \otimes \sigma)$. Let $s \in \Gamma$ and let $Y \in (\C_A)_s$. By definition of the grading of $\C_A$ given in Subsection \ref{ss-grading}, the half-braiding $\tilde c_Y : A \otimes A \otimes_A Y \to Y \otimes_A A \otimes A$ satisfies $\tilde c_Y  \; \tau_{Y, A\otimes A}= \id_Y \otimes_A \; s^{-1}$, where $\tau$ is the flip isomorphism. Hence the triangle on the left of the following diagram
$$
\xymatrix@R=0.7em @C=5em{
	&A\otimes A \otimes_A Y \ar[r]^{p_g \otimes \id_Y} \ar[dd]^{\tilde c_Y}  & A\otimes_A Y  \ar[dd]^{\tau_{A, Y} }\\
	Y\otimes_A A \otimes A \ar[ur]^-{\tau_{Y, A\otimes A}} \ar[dr]_-{\id_Y \otimes s^{-1}} &&\\
	& Y\otimes_A A \otimes A \ar[r]_{\quad \id_Y \otimes p_{s^{-1}\rhd g}}& Y \otimes_A
	A, }
$$	
commutes. By definition of the action $\rhd$, the composition in the bottom of the diagram equals $\id_Y \otimes p_g : Y\otimes_A A \otimes A \to Y \otimes_A A$, which by naturality of $\tau$ coincides with the composition
$\tau_{A, Y} \, (p_g \otimes \id_Y) \, \tau_{Y, A \otimes A}$.
Hence the square on the right of the diagram commutes.

\medbreak
Consider the following diagram of morphisms in $\C$:

\bigskip
$$
\xymatrix@R=2em @C=5em{
&	& A\otimes A \otimes Y \ar[r]^{p_g \otimes \id_Y } \ar[d]^{}  & A\otimes Y  \ar[d]_{}  \ar[r]^{\sigma_Y}& Y \otimes A \ar[d]^{\mu_Y}\\
& A \otimes Y \ar @/^4.5pc/@{->}[urr]^{\alpha_g \otimes \id_Y} \ar[r]^{\cong \quad } \ar[d]_{\sigma_{Y}} \ar[ur]^{\id_{\!A}\!\otimes \eta \otimes \id_Y} & A\otimes A \otimes_A Y \ar[r]^{p_g \otimes_A \id_Y} \ar[d]^{\tilde c_{Y}} & A\otimes_A Y \ar[d]^{\tau_{A, Y}} \ar[r]^{\cong} & Y\\
& Y \otimes A \ar[r]_{\cong \quad } \ar[dr]_{\id_Y\!\otimes\!\id_A\otimes \eta} \ar @/_4.5pc/@{->}[drr]_{\id_Y \otimes \alpha_{s^{-1}\rhd g}} & Y \otimes_A A \otimes A \ar[r]_{\quad \id_Y\!\otimes_A p_{s^{-1}\rhd g}\; }  & Y\otimes_A A \ar[r]^{\cong}& Y\ar[u]_{=}\\
&	& Y\otimes A \otimes A \ar[r]_{\quad \id_Y\!\otimes p_{s^{-1}\rhd g}\quad}\ar[u]^{} & Y \otimes A \ar[u]^{} \ar[ur]_{\mu_Y}& }
$$

\bigskip
\noindent where the unspecified arrows are all canonical. As seen before, the middle square within the  two middle rows is commutative. Using the basic properties of the tensor product in $\C_A$, we obtain that the squares over it and under it, the two triangles on the left-most part of the diagram and the three diagrams on the right-most part of the diagram are commutative.
The curved triangles on top and bottom of the diagram are commutative by definition of $p_g$, $g \in G$.

Finally,  the left-most square within the two middle rows is commutative by Lemma \ref{comm-tildec}.
Therefore we obtain that the outside diagram is commutative. This implies the commutativity of the diagram \eqref{alfa-sigma} and finishes the proof of the lemma.
\end{proof}

\begin{remark}Let $\lambda_Y = \mu_Y \sigma_Y: A \otimes Y \to Y \otimes A$ be the left action corresponding to $\mu_Y$. The statement in Proposition \ref{a-s} can be reformulated by saying that $\lambda_Y^{s\rhd g} = \lambda_{\rho^g(Y)}$, for all $Y \in (\C_A)_s$.
\end{remark}

\begin{corollary}\label{exists-gama} Let $g \in G$, $t \in \Gamma$. For every $X \in \C_A$, $Y \in (\C_A)_t$, the identity morphism $X \otimes Y \to X \otimes Y$ induces a canonical isomorphism $\rho^g(X \otimes_A Y) \cong \rho^{t\rhd g}(X) \otimes_A \rho^g(Y)$ of right $A$-modules in $\C$.
\end{corollary}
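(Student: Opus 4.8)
The plan is to present both $\rho^g(X \otimes_A Y)$ and $\rho^{t\rhd g}(X) \otimes_A \rho^g(Y)$ as coequalizers of parallel pairs of morphisms out of $X \otimes A \otimes Y$, observe that the two pairs differ only by precomposition with a single automorphism, and conclude that the identity of $X \otimes Y$ descends to the asserted isomorphism of right $A$-modules.

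First I recall from Subsection~\ref{braid} that $X \otimes_A Y$ is the coequalizer of
\[
\mu_X \otimes \id_Y, \quad \id_X \otimes \lambda_Y : X \otimes A \otimes Y \to X \otimes Y,
\]
where $\lambda_Y = \mu_Y \sigma_Y$ is the left $A$-action on $Y$ induced by the half-braiding, and that the right $A$-module structure of $X \otimes_A Y$ is induced, through the projection $p : X \otimes Y \to X \otimes_A Y$, by $\id_X \otimes \mu_Y$. Since $\rho^g$ is the identity on underlying objects, $\rho^g(X \otimes_A Y)$ has underlying object $X \otimes_A Y$ with right action $\mu_{X \otimes_A Y}(\id \otimes \alpha_g)$; a one-line computation using $\mu_{X \otimes_A Y}(p \otimes \id_A) = p(\id_X \otimes \mu_Y)$ shows that this twisted action is induced by $\id_X \otimes \mu^g_Y$, where $\mu^g_Y = \mu_Y(\id_Y \otimes \alpha_g)$ is the right action of $\rho^g(Y)$.

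Next I write down the coequalizer presentation of $\rho^{t\rhd g}(X) \otimes_A \rho^g(Y)$: its defining pair is $\mu^{t\rhd g}_X \otimes \id_Y$ and $\id_X \otimes \lambda_{\rho^g(Y)}$, where $\mu^{t\rhd g}_X = \mu_X(\id_X \otimes \alpha_{t\rhd g})$ by definition of $\rho^{t\rhd g}$, and $\lambda_{\rho^g(Y)} = \mu^g_Y \sigma_Y$. The essential input is the reformulation of Proposition~\ref{a-s}: since $Y \in (\C_A)_t$, we have $\lambda_{\rho^g(Y)} = \lambda_Y(\alpha_{t\rhd g} \otimes \id_Y)$. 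Together with the formula for $\mu^{t\rhd g}_X$, this shows that both morphisms of the second pair are obtained from those of the first pair by precomposing with the single automorphism $\beta = \id_X \otimes \alpha_{t\rhd g} \otimes \id_Y$ of $X \otimes A \otimes Y$. As $\beta$ is invertible, a morphism out of $X \otimes Y$ coequalizes the second pair if and only if it coequalizes the first; hence the two coequalizers have the same universal object and projection, and the identity of $X \otimes Y$ induces an isomorphism $\rho^g(X \otimes_A Y) \cong \rho^{t\rhd g}(X) \otimes_A \rho^g(Y)$ in $\C$.

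Finally I check that this identity-induced isomorphism is $A$-linear. The right $A$-module structure of $\rho^{t\rhd g}(X) \otimes_A \rho^g(Y)$ is induced by $\id_X \otimes \mu^g_Y$ (right multiplication on the second tensor factor $\rho^g(Y)$), which is exactly the morphism shown in the first step to induce the twisted action on $\rho^g(X \otimes_A Y)$; since $p \otimes \id_A$ is an epimorphism, the two module structures coincide. I expect the main obstacle to be the careful bookkeeping of left versus right $A$-actions, and in particular tracking where $\alpha_g$ and $\alpha_{t\rhd g}$ enter through the half-braiding $\sigma$ --- this is precisely the point at which Proposition~\ref{a-s} is needed, in order to identify the left action of $\rho^g(Y)$ with the $(t\rhd g)$-twist of the left action of $Y$.
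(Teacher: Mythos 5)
Your proof is correct and follows essentially the same route as the paper: both present $\rho^g(X \otimes_A Y)$ and $\rho^{t\rhd g}(X) \otimes_A \rho^g(Y)$ as coequalizers of parallel pairs out of $X \otimes A \otimes Y$, and use Proposition \ref{a-s} to show the second pair is the first pair precomposed with the automorphism $\id_X \otimes \alpha_{t\rhd g} \otimes \id_Y$, so the identity of $X \otimes Y$ descends to the desired isomorphism. Your explicit verification of $A$-linearity via $\id_X \otimes \mu^g_Y$ is a welcome elaboration of a step the paper dismisses as clear.
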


\begin{proof}
As an object of $\C$, $\rho^g(X \otimes_A Y)$ is the coequalizer of the diagram
\begin{equation*}
\xymatrix@C=5.5em{ X \otimes A \otimes Y \ar[r]<1ex>^{\mu_X \otimes \id_Y} \ar@<-1ex>[r]_{\quad \id_X \otimes \mu_Y\sigma_Y}  & X \otimes Y.}
\end{equation*}	
Similarly, $\rho^{t\rhd g}(X) \otimes_A \rho^g(Y)$ is the coequalizer of the diagram
\begin{equation*}
\xymatrix@C=5.5em{ X \otimes A \otimes Y \ar[r]<1ex>^{\mu_X^{t\rhd g} \otimes \id_Y} \ar@<-1ex>[r]_{\quad \id_X \otimes \mu_Y^g\sigma_Y} & X \otimes Y.}
\end{equation*}	
On the one hand, we have
$$\mu_X^{t\rhd g} \otimes \id_Y = (\mu_X \otimes \id_Y) \; (\id_X \otimes \alpha_{t\rhd g} \otimes \id_Y),$$	
and on the other hand, using Proposition \ref{a-s},
\begin{align*}\id_X \otimes \mu_Y^g\sigma_Y & = (\id_X \otimes \mu_Y)\; (\id_X \otimes \id_Y \otimes \alpha_g) \; (\id_X \otimes \sigma_Y) \\
& = (\id_X \otimes \mu_Y)\; (\id_X \otimes \sigma_Y) \; (\id_X \otimes \alpha_{t\rhd g} \otimes \id_Y) \\
& = (\id_X \otimes \mu_Y\sigma_Y)\;  (\id_X \otimes \alpha_{t\rhd g} \otimes \id_Y).
\end{align*}
Thus the identity morphism $X \otimes Y \to X \otimes Y$ induces a canonical isomorphism $\rho^g(X \otimes_A Y) \cong \rho^{t\rhd g}(X) \otimes_A \rho^g(Y)$ in $\C$. It is clear that this isomorphism commutes with the right $A$-module structures. This proves the statement.
\end{proof}

\subsection{Permutation action $\lhd:\Gamma \times G \to \Gamma$}\label{def-lhd}
We show in this subsection that, for every $g \in G$, the $k$-linear autoequivalence  $\rho^g:\C \to \C$ permutes the homogeneous components $\C_s$, $s \in \Gamma$, thus giving rise to a permutation action $\lhd:\Gamma \times G \to \Gamma$.

\begin{lemma}\label{ce-stable} The trivial homogeneous component $(\C_A)_e$ is stable under the endofunctors $\rho^g$, for all $g \in G$.
\end{lemma}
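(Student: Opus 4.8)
My strategy rests on the description of the grading from Subsection~\ref{ss-grading}. By \eqref{autx} and diagram \eqref{diag-partial}, a simple object $X$ of $\C_A$ lies in $(\C_A)_e$ exactly when $\partial(X) = \id_{A \otimes A}$, i.e. when the half-braiding $\tilde c_X \colon \Bb \otimes_A X \to X \otimes_A \Bb$ of $\Bb = (A \otimes A, \tilde c)$ agrees with the trivial half-braiding $\tau_X$ coming from the triviality of the object $A \otimes A$ in $\C_A$. Since $\rho^g$ is exact and $(\C_A)_e$ is the Serre subcategory generated by the simple objects of trivial degree, it suffices to prove: if $X$ is simple with $\tilde c_X = \tau_X$, then $\tilde c_{\rho^g(X)} = \tau_{\rho^g(X)}$ for all $g \in G$.

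The point I would exploit is that, by Proposition~\ref{def-b} and the construction of $\tilde c$ recalled after Proposition~\ref{char-sym}, the half-braiding $\tilde c_Y$ is induced from $c_{A \otimes A, Y} = (\sigma_Y \otimes \id_A)(\id_A \otimes \sigma_Y)$, a morphism depending only on the underlying object of $Y$ in $\C$, by factoring through the canonical projections onto $\Bb \otimes_A Y$ and $Y \otimes_A \Bb$. As $\rho^g(X)$ has the same underlying object as $X$, this ``unreduced'' morphism is literally unchanged; only the projections differ, since they are computed from the twisted module structure $\mu^g_X = \mu_X(\id_X \otimes \alpha_g)$ and the associated left action $\lambda_{\rho^g(X)} = \mu^g_X \sigma_X$. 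To compare the two situations I would test against the canonical epimorphism $\iota_{\rho^g(X)} \colon A \otimes X \to \Bb \otimes_A \rho^g(X)$ induced by $\id_A \otimes \eta \otimes \id_X$ (this is an epimorphism because the right $A$-action on $A \otimes A$ is on the second tensorand). By Lemma~\ref{comm-tildec} one has $\tilde c_{\rho^g(X)}\,\iota_{\rho^g(X)} = \iota'_{\rho^g(X)}\,\sigma_X$, where $\iota'_{\rho^g(X)}$ is the companion map into $\rho^g(X) \otimes_A \Bb$ induced by $\id_X \otimes \id_A \otimes \eta$; hence the desired equality reduces to the single identity
\begin{equation*}
\iota'_{\rho^g(X)}\,\sigma_X = \tau_{\rho^g(X)}\,\iota_{\rho^g(X)}
\end{equation*}
of morphisms out of $A \otimes X$, the analogous identity for $X$ itself being guaranteed by the hypothesis $\tilde c_X = \tau_X$.

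The decisive input is Proposition~\ref{a-s} in the neutral case $s = e$, where $e \rhd g = g$: it gives $\lambda_{\rho^g(X)} = \lambda_X(\alpha_g \otimes \id_X)$, so that on $(\C_A)_e$ the functor $\rho^g$ merely conjugates the bimodule structure by $\alpha_g$. Combining this with the factorization of $\iota'_{\rho^g(X)}$ through $\mu^g_X$ (which, together with $\mu^g_X(\id_X \otimes \alpha_g^{-1}) = \mu_X$, absorbs the twist), I would rewrite $\iota'_{\rho^g(X)}\,\sigma_X$ in terms of $\lambda_X(\alpha_g \otimes \id_X)$ and match it against $\tau_{\rho^g(X)}\,\iota_{\rho^g(X)}$. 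I expect the main obstacle to be exactly this last matching, i.e. controlling how the \emph{trivial} half-braiding $\tau$ interacts with the $\alpha_g$-twisted relative tensor products: because $\alpha_g$ need not lie in $\Aut_{\Z(\C)}(A, \sigma)$, it does not commute with $\sigma$, and one cannot transport $\tilde c_X = \tau_X$ to $\rho^g(X)$ by naturality alone. Proposition~\ref{a-s} is precisely the identity that compensates for this failure, and the delicate part is to organize the coequalizer bookkeeping so that its $s = e$ instance closes the computation.
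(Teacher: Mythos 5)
Your reduction is sound as far as it goes: by \eqref{diag-partial} a simple $X$ lies in $(\C_A)_e$ iff $\tilde c_X = \tau_X$; Lemma \ref{comm-tildec} applied to $Y = \rho^g(X)$ (whose underlying object, and hence whose $\sigma$, is that of $X$) does give $\tilde c_{\rho^g(X)}\,\iota_{\rho^g(X)} = \iota'_{\rho^g(X)}\,\sigma_X$; and since $\iota_{\rho^g(X)}$ is epi (indeed invertible, being the free-module isomorphism $A \otimes X \cong A \otimes A \otimes_A \rho^g(X)$ with inverse induced by $\id_A \otimes \lambda_{\rho^g(X)}$), the lemma is equivalent to the identity $\iota'_{\rho^g(X)}\,\sigma_X = \tau_{\rho^g(X)}\,\iota_{\rho^g(X)}$. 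But this identity is exactly where your proof stops, and the tools you name do not close it. Proposition \ref{a-s} at $s = e$ identifies the coequalizer diagrams defining $A \otimes A \otimes_A X$ and $A \otimes A \otimes_A \rho^g(X)$ up to the automorphism $\id_A \otimes \alpha_g \otimes \id_X$ of $A \otimes A \otimes X$, but the comparison isomorphism this induces between the two coequalizers is \emph{not} a morphism in $\C_A$: it fails to intertwine the right actions $\mu^g_X$ and $\mu_X$. Consequently you cannot invoke naturality of the trivial half-braiding $\tau$ (which is a purely $\C_A$-intrinsic datum, defined through decompositions into copies of the unit object $A$) to transport the hypothesis $\tilde c_X = \tau_X$ across it. A comparison living in $\C_A$ is what is missing, and it is supplied not by Proposition \ref{a-s} but by the observation that $\id_A \otimes \alpha_g \colon A \otimes A \to \rho^g(A \otimes A)$ is an isomorphism of right $A$-modules, i.e.\ that free modules are $\rho^g$-stable up to isomorphism in $\C_A$. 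You flag this final matching yourself as the ``main obstacle'' and ``delicate part''; since it is precisely the mathematical content of the lemma, the proposal as written has a genuine gap.

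The paper's proof is entirely different and bypasses every half-braiding computation. It runs: (i) for $X \in (\C_A)_e$, the half-braiding axiom for $\tilde c$ together with $\tilde c_{A \otimes A} = \tau_{A\otimes A, A \otimes A}$ gives $\tilde c_{X \otimes_A A \otimes A} = (\id_X \otimes_A \tau_{A \otimes A, A\otimes A})(\tau_X \otimes_A \id_{A \otimes A})$, so $X \otimes A \cong X \otimes_A (A \otimes A)$ again lies in $(\C_A)_e$; (ii) $\id_X \otimes \alpha_g \colon X \otimes A \to \rho^g(X \otimes A)$ is an isomorphism of right $A$-modules, because $\alpha_g$ is an algebra automorphism; (iii) $\eta \otimes \id_A$ splits in $\C_A$, so $X$ is a direct summand of $X \otimes_A A \otimes A \cong X \otimes A$, whence $\rho^g(X)$ is a direct summand of $\rho^g(X \otimes A) \cong X \otimes A \in (\C_A)_e$, and homogeneous components are closed under direct summands. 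Note that Proposition \ref{a-s} is not used at all. If you want to salvage your approach, step (ii) is exactly the ingredient you must add to your coequalizer analysis; but once you have it, the paper's three-line direct-summand argument is strictly shorter than finishing the comparison you set up.
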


\begin{proof} Let $X$ be an object of $(\C_A)_e$. Then $\tilde c_{X} = \tau_{A \otimes A, X}: A\otimes A \otimes_A X \to X \otimes_A A \otimes A$, where $\tau$ is the trivial half-braiding. In particular, $\tilde c_{A\otimes A} = \tau_{A \otimes A, A \otimes A}$, because $A \otimes A$ is a trivial object of $\C_A$.

On the other hand, $X \otimes A \cong X \otimes_A A \otimes A$ in $\C_A$ and since $\tilde c: A \otimes A \otimes_A - \to - \otimes_A A \otimes A$ is a half-brading, there is a commutative diagram:
$$
\xymatrix@R=5em @C=0.3em{A \otimes A \otimes_A X \otimes_A A \otimes A \ar[dr]_{\tau_X \otimes_A \id_{A \otimes A}\quad } \ar[rr]^{\tilde c_{X \otimes_A A \otimes A}} & & X \otimes_A A \otimes A \otimes_A A \otimes A \\
 & X \otimes_A A \otimes A \otimes_A A \otimes A \ar[ur]_{\qquad \id_X \otimes_A \tau_{A\otimes A, A \otimes A}}. & }
$$
Therefore $\tilde c_{X \otimes A} =  \tilde c_{X \otimes_A A \otimes A} = \tau_{A \otimes A, X \otimes A}$ and thus $X \otimes A \in (\C_A)_e$.

\medbreak
Note that $\eta \otimes \id_A: A \to A \otimes A$ is a split monomorphism in $\C_A$. Hence for every object $X \in \C_A$, $X \cong X \otimes_A A$ is a direct summand of $X \otimes_A A \otimes A \cong X \otimes A$. Let now $g \in G$. Then $\rho^g(X)$ is a direct summand of $\rho^g(X \otimes A)$. In addition $\id_X \otimes \alpha_g: X \otimes A \to \rho^g(X \otimes A)$ is an isomorphism of right $A$-modules in $\C$. This implies that $\rho^g(X)$ is a direct summand of $X \otimes A$. Thus $\rho^g(X) \in (\C_A)_e$ for all $g \in G$, as claimed.
\end{proof}

\begin{proposition}\label{compatible} There exists an action by permutations  $\lhd:\Gamma \times G \to \Gamma$ such that $e \lhd g = e$ and $$\rho^g((\C_A)_s) \subseteq (\C_A)_{s \lhd g},$$ for all $g \in G$, $s \in \Gamma$. \end{proposition}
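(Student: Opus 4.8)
The plan is to define the action $\lhd$ directly on the level of degrees and to reduce the whole statement to two facts already at hand: that each $\rho^g$ is a $k$-linear autoequivalence of $\C_A$ (being part of the strict right action of Subsection \ref{ss-action}, with quasi-inverse $\rho^{g^{-1}}$), and the twisted compatibility of $\rho^g$ with $\otimes_A$ provided by Corollary \ref{exists-gama}, together with the stability of the neutral component in Lemma \ref{ce-stable}. First I would note that, being an equivalence, $\rho^g$ carries simple objects to simple objects; since every nonzero simple object is homogeneous with degree in the support $\Gamma$, each simple $Y\in(\C_A)_s$ is sent to a simple object of some well-defined degree $\partial(\rho^g(Y))\in\Gamma$. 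The crux is to show that this degree depends only on $s$, and not on the chosen simple $Y$. Granting this, I set $s\lhd g:=\partial(\rho^g(Y))$ for any simple $Y$ of degree $s$; then $\rho^g((\C_A)_s)\subseteq(\C_A)_{s\lhd g}$, because $(\C_A)_s$ is the Serre subcategory generated by its simple objects and $\rho^g$ is exact.

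To prove this independence I would fix two simple objects $Y,Y'\in(\C_A)_s$ and exploit duality. Since a grading of a tensor category inverts degrees on duals, $Y^*\in(\C_A)_{s^{-1}}$, so both $Y^*\otimes_A Y$ and $Y^*\otimes_A Y'$ lie in the neutral component $(\C_A)_e$. Applying $\rho^g$ and invoking Corollary \ref{exists-gama} with the \emph{homogeneous} right-hand factors $Y,Y'\in(\C_A)_s$ gives
\[
\rho^g(Y^*\otimes_A Y)\cong \rho^{s\rhd g}(Y^*)\otimes_A\rho^g(Y),\qquad
\rho^g(Y^*\otimes_A Y')\cong \rho^{s\rhd g}(Y^*)\otimes_A\rho^g(Y').
\]
By Lemma \ref{ce-stable} both left-hand sides lie in $(\C_A)_e$. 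Now $\rho^{s\rhd g}(Y^*)$, $\rho^g(Y)$ and $\rho^g(Y')$ are simple, hence homogeneous, and a tensor product of nonzero objects is nonzero and homogeneous of degree the product of the degrees; reading off degrees yields $\partial(\rho^{s\rhd g}(Y^*))\,\partial(\rho^g(Y))=e=\partial(\rho^{s\rhd g}(Y^*))\,\partial(\rho^g(Y'))$. Cancelling the common factor gives $\partial(\rho^g(Y))=\partial(\rho^g(Y'))$, as required. I expect this to be the only genuinely delicate point, and its resolution hinges on a single observation: Corollary \ref{exists-gama} produces the \emph{same} shifted element $s\rhd g$ in both isomorphisms precisely because $Y$ and $Y'$ share the degree $s$, and it is this common factor $\rho^{s\rhd g}(Y^*)$ that makes the cancellation possible.

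Finally I would verify that $\lhd$ is a permutation action with the asserted properties. Taking $Y$ simple of degree $e$, Lemma \ref{ce-stable} gives $\partial(\rho^g(Y))=e$, whence $e\lhd g=e$. For the action axioms, $s\lhd e=\partial(\rho^e(Y))=\partial(Y)=s$, while for $g,h\in G$ I choose a simple $Y$ of degree $s$, note that $\rho^g(Y)$ is simple of degree $s\lhd g$, and use the strictness relation $\rho^h\rho^g=\rho^{gh}$ (which follows at once from $\mu_Y^g(\id_Y\otimes\alpha_h)=\mu_Y(\id_Y\otimes\alpha_{gh})$) to compute $(s\lhd g)\lhd h=\partial(\rho^h(\rho^g(Y)))=\partial(\rho^{gh}(Y))=s\lhd(gh)$. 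Since each $\rho^g$ is invertible with inverse $\rho^{g^{-1}}$, each map $-\lhd g$ is a bijection of $\Gamma$, so $\lhd:\Gamma\times G\to\Gamma$ is a right action by permutations satisfying $e\lhd g=e$ and $\rho^g((\C_A)_s)\subseteq(\C_A)_{s\lhd g}$ for all $g\in G$ and $s\in\Gamma$, which proves the proposition.
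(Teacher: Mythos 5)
Your proof is correct and takes essentially the same route as the paper: both establish well-definedness of $s\lhd g$ by tensoring with the dual of a simple object of degree $s$, landing in the neutral component (Lemma \ref{ce-stable}), applying Corollary \ref{exists-gama} so that the same twisted factor $\rho^{s\rhd g}(\,\cdot^{*})$ appears, and cancelling it in the grading group. The only cosmetic difference is that the paper uses a single product $X^{*}\otimes_A Y$ and notes that the resulting degree is independent of $Y$, whereas you compare two products $Y^{*}\otimes_A Y$ and $Y^{*}\otimes_A Y'$; the verification of the action axioms via strictness of $\rho$ is likewise the same.
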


\begin{proof} Let $s \in \Gamma$ and let $X$ be a simple object of $(\C_A)_s$. For every $g \in G$, $\rho^g(X)$ is a simple object of $\C_A$ and therefore there exists $s' \in \Gamma$ such that $\rho^g(X) \in (\C_A)_{s'}$. Suppose that $Y \in (\C_A)_s$ is another simple object and $\rho^g(Y) \in (\C_A)_{s''}$, $s'' \in \Gamma$.

\medbreak
Then $X^* \otimes_A Y \in (\C_A)_{s^{-1}s} = (\C_A)_{e}$ and by Lemma \ref{ce-stable}, $\rho^g(X^* \otimes_A Y) \in (\C_A)_e$. It follows from Corollary  \ref{exists-gama} that $\rho^g(X^* \otimes_A Y) \cong \rho^{s \rhd g}(X^*) \otimes_A \rho^g(Y)$. Since $X^*$ is also a simple object, then $\rho^{s\rhd g}(X^*) \in (\C_A)_t$, for some $t \in \Gamma$, whence
$\rho^g(X^* \otimes_A Y) \in (\C_A)_{ts''}$. This entails that $s'' = t^{-1}$.
Since $Y \in (\C_A)_s$ was an arbitrary simple object, we can conclude that $s' = t^{-1} = s''$.

\medbreak
Therefore for every $g \in G$, $s \in \Gamma$, there is a well-defined element $s \lhd g \in \Gamma$ such that  $\rho^g(\C_s) \subseteq \C_{s\lhd g}$. Since $\rho^g\rho^h = \rho^{hg}$, for all $g, h \in G$, this gives rise to an action by permutations $\lhd: \Gamma  \times G \to \Gamma$ that satisfies the required property.
\end{proof}

\subsection{Proof of Theorem \ref{main1}}\label{proof-main1} Suppose that $(\E): \; \Rep G \toto \C \toto \D$ is an abelian exact sequence of finite tensor categories. We may identify $(\E)$ with the equivalent an exact sequence \eqref{ab-tsec},
where $(A, \sigma)$ is the induced central algebra of $(\E)$.

\medbreak
Let $\Gamma \subseteq \Aut_A(A \otimes A)$ be the subgroup introduced in Subsection \ref{ss-grading}.
Let also $\rhd: \Gamma \times G \to G$ and $\lhd: \Gamma \times G \to \Gamma$ be the permutation actions defined in Subsections \ref{def-rhd} and \ref{def-lhd}, respectively. Theorem \ref{main1} will follow from the next result:

\begin{theorem}\label{main-ca} The permutation actions $\rhd$ and $\lhd$ make $(G, \Gamma)$ into a matched pair of finite groups. Furthermore, $\C_A$ is endowed with a $(G, \Gamma)$-crossed action such that $\C \cong (\C_A)^{(G, \Gamma)}$ as extensions of $\C_A$ by $\Rep G$.
\end{theorem}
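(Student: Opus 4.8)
The plan is to assemble the data of a $(G,\Gamma)$-crossed action on $\C_A$ out of the constructions already in place, and then to identify $\C$ with the resulting equivariantization. I would begin with the matched pair axioms \eqref{matched}. The second identity $st\lhd g=(s\lhd(t\rhd g))(t\lhd g)$ follows by comparing degrees: for $X\in(\C_A)_s$, $Y\in(\C_A)_t$ one has $X\otimes_A Y\in(\C_A)_{st}$, so $\rho^g(X\otimes_A Y)\in(\C_A)_{st\lhd g}$, while Corollary \ref{exists-gama} identifies this object with $\rho^{t\rhd g}(X)\otimes_A\rho^g(Y)$, which by Proposition \ref{compatible} lies in $(\C_A)_{(s\lhd(t\rhd g))(t\lhd g)}$. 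For the first identity $s\rhd gh=(s\rhd g)((s\lhd g)\rhd h)$ I would use the reformulation $\lambda_{\rho^g(Y)}=\lambda_Y^{s\rhd g}$ of Proposition \ref{a-s}, where $\lambda_Y^{a}:=\lambda_Y(\alpha_a\otimes\id_Y)$. Applying it to $Y\in(\C_A)_s$ with $g$, then to $\rho^g(Y)\in(\C_A)_{s\lhd g}$ with $h$, and using $\rho^{gh}=\rho^h\rho^g$ together with $\alpha_{ab}=\alpha_a\alpha_b$, yields $\lambda_Y(\alpha_{s\rhd gh}\otimes\id_Y)=\lambda_Y(\alpha_{(s\rhd g)((s\lhd g)\rhd h)}\otimes\id_Y)$ for every such $Y$; a faithfulness argument in the spirit of Lemma \ref{ff-rho} then cancels $\lambda_Y$ and forces the equality in $G$.

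Next I would check that the $\Gamma$-grading of Subsection \ref{ss-grading}, the action $\rho$ of Subsection \ref{ss-action}, the isomorphisms $\gamma^g_{X,Y}$ supplied by Corollary \ref{exists-gama}, and the isomorphisms $\gamma^g_0=\alpha_g^{-1}\colon\rho^g(\1)\to\1$ (an $A$-module map precisely because $\alpha_g$ is an algebra automorphism) constitute a $(G,\Gamma)$-crossed action. Condition \eqref{rho-partial} is Proposition \ref{compatible}, promoted to an equality since each $\rho^g$ is an autoequivalence and $\lhd$ is a permutation action. The decisive simplification is that every $\gamma^g_{X,Y}$ in \eqref{gamma} is induced by the identity of $X\otimes Y$ on the coequalizers defining the two sides; consequently each coherence diagram (a)--(e) commutes as soon as the corresponding diagram of identity morphisms on the underlying tensor products in $\C$ commutes, which it does by associativity and the definition of $\otimes_A$.

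With the crossed action established, \cite[Theorem 6.1]{crossed-action} produces the canonical exact sequence \eqref{c-abelian}, namely $\Rep G\to(\C_A)^{(G,\Gamma)}\to\C_A$. I would then define the comparison functor $\Phi\colon\C\to(\C_A)^{(G,\Gamma)}=(\C_A)^G$ by $\Phi(X)=(F_A(X),(\id_X\otimes\alpha_g^{-1})_{g\in G})$; the cocycle condition \eqref{deltau} reduces to $\id_X\otimes\alpha_g^{-1}\alpha_h^{-1}=\id_X\otimes\alpha_{hg}^{-1}$, which holds, and the monoidal structure of $\Phi$ is induced by that of $F_A$. By construction the forgetful functor of the equivariantization composed with $\Phi$ equals $F_A$, so $\Phi$ is a morphism from \eqref{ab-tsec} to \eqref{c-abelian} restricting to the canonical identification $\langle A\rangle\cong\Rep G$ on kernels. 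To see it is an equivalence I would prove $\Phi$ fully faithful: the adjunction $F_A\dashv U$ gives $\Hom_{(\C_A)^G}(\Phi X,\Phi X')\cong\Hom_\C(X,X'\otimes A)^G$, and since the $G$-invariant part of $A$ reduces to $\Hom_\C(\1,A)\cong k$, this equals $\Hom_\C(X,X')$. Essential surjectivity then follows from $\FPdim\C=|G|\,\FPdim\C_A=\FPdim(\C_A)^{(G,\Gamma)}$, a fully faithful tensor functor between finite tensor categories of equal Frobenius--Perron dimension being an equivalence.

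The main obstacle I expect lies in two places that require genuine care rather than formal manipulation. First, verifying the mixed hexagon (c), where the action constraints $\rho_2$, the isomorphisms $\gamma$, and both permutation actions $\rhd$, $\lhd$ must be tracked simultaneously. Second, and more essential, checking that $\Phi$ is monoidal for the \emph{crossed} tensor product of $(\C_A)^{(G,\Gamma)}$, that is, that the constraint $(F_A^2)$ built from the half-braiding $\sigma$ is intertwined by the structure maps $\tilde r$ assembled from the $\gamma^g_{X,Y_s}$. This compatibility is exactly what upgrades the underlying $k$-linear identification to an equivalence of tensor categories, and hence to an equivalence of abelian exact sequences.
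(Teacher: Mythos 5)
Your architecture tracks the paper's proof almost step for step: the identity $st\lhd g=(s\lhd(t\rhd g))(t\lhd g)$ by comparing degrees of $\rho^g(X\otimes_A Y)$ via Proposition \ref{compatible} and Corollary \ref{exists-gama}; the identity $s\rhd gh=(s\rhd g)((s\lhd g)\rhd h)$ by applying Proposition \ref{a-s} twice to the left action $\lambda_Y=\mu_Y\sigma_Y$ and cancelling with the left-module version of Lemma \ref{ff-rho}; the crossed action with every $\gamma^g_{X,Y}$ the identity and $\gamma^g_0=\alpha_g^{-1}$; and the same comparison functor $\Phi(X)=(X\otimes A,(\id_X\otimes\alpha_g^{-1})_{g\in G})$. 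Where you genuinely diverge is the last step. The paper shows that $\Phi$ restricts to an equivalence $\langle A\rangle\to\langle\1_{\C_A}\rangle^G$ on kernels and concludes from the resulting commutative diagram of exact sequences; you instead prove full faithfulness directly (via the adjunction $F_A\dashv U$ and the fact that the subobject of $G$-invariants of $A$ is $\eta(\1)$, which holds by Remark \ref{a=h}) and deduce essential surjectivity from multiplicativity of $\FPdim$ along exact sequences together with the criterion that a fully faithful tensor functor between finite tensor categories of equal Frobenius--Perron dimension is an equivalence. This alternative is valid and arguably more self-contained in its logic, though both of its inputs are themselves imported results (from \cite{tensor-exact} and \cite{EGNO} respectively), and note that the $\FPdim$ criterion applies to \emph{tensor} functors, so it too presupposes the point raised next.

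The genuine gap is that you never verify that $\Phi$ is a tensor functor for the \emph{crossed} tensor product of $(\C_A)^{(G,\Gamma)}$, i.e., that $(F_A^2)_{X,Y}$ intertwines the equivariant structures $\tilde r$ assembled from the $\gamma^g_{X,Y_s}$. You correctly identify this as the essential compatibility ("the main obstacle"), but flagging it is not proving it: this verification is the bulk of the paper's proof, a page-long computation that decomposes $Y\otimes A$ into $\Gamma$-homogeneous components and uses Proposition \ref{a-s}, the commutativity of $A$, naturality of $\sigma$, and the matched-pair consequence $(s\rhd g)^{-1}=(s\lhd g)\rhd g^{-1}$ applied componentwise. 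Without it, neither your equivalence argument nor the paper's goes through, since both need $\Phi$ to be monoidal. By contrast, your other flagged worry is unfounded: since the action $\rho$ on $\C_A$ is strict and each $\gamma^g_{X,Y}$ is the identity on the underlying object, the coherence diagrams (a)--(e) of Subsection \ref{ss-crossed}, including the hexagon (c), commute for free, exactly as the paper disposes of them in one sentence.
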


\begin{proof} Let $s, t \in \Gamma$ and let $g, h \in G$. Since the $\Gamma$-grading on $\C_A$ is faithful we may pick nonzero objects $X \in (\C_A)_s$ and $Y \in (\C_A)_t$. Then $X \otimes_A Y \in (\C_A)_{st}$ and thus $\rho^g(X \otimes_A Y) \in (\C_A)_{st \lhd g}$, by Proposition \ref{compatible}. On the other hand, combining  Proposition \ref{compatible} and Corollary \ref{exists-gama}, we find that $\rho^g(X \otimes_A Y) \cong \rho^{t\rhd g}(X) \otimes_A \rho^g(Y) \in (\C_A)_{(s \lhd (t\rhd g)) \; (t \lhd g)}$. Therefore $st \lhd g = (s \lhd (t\rhd g)) \; (t \lhd g)$.
	
\medbreak
Since $Y \in (\C_A)_t$, it follows from Proposition \ref{a-s} that for all $x \in G$,
\begin{equation}\label{allx} \mu_Y^x \sigma_Y = \mu_Y \; (\id_Y \otimes \alpha_x) \; \sigma_Y = \mu_Y \; \sigma_Y \; (\alpha_{t \rhd x} \otimes \id_Y),
\end{equation}
Therefore
$$\mu_Y \; (\id_Y \otimes \alpha_{gh}) \sigma_Y  = \mu_Y \sigma_Y \; (\alpha_{t \rhd gh} \otimes \id_Y).$$
On the other hand,
\begin{align*}
\mu_Y \; (\id_Y \otimes \alpha_{gh}) \sigma_Y  & = \mu_Y^g \; (\id_Y \otimes \alpha_{h}) \; \sigma_Y \\
& = \mu_Y^g \; \sigma_Y \; (\alpha_{(t\lhd g)\rhd h} \otimes \id_Y)\\
& = \mu_Y \; \sigma_Y \; (\alpha_{t \rhd g} \alpha_{(t\lhd g)\rhd h} \otimes \id_Y),
\end{align*}
the second equality because $\rho^g(Y) \in (\C_A)_{t \lhd g}$ and the third by \eqref{allx}.

Consider the left $A$-module structure $\lambda_Y = \mu_Y\sigma_Y: A \otimes Y \to Y$. The previous discussion shows that $$\lambda_Y^{t \rhd gh} = \lambda_Y^{(t \rhd g) \; ((t\lhd g)\rhd h)}.$$ Hence the left-hand version of Lemma \ref{ff-rho} implies that $t\rhd gh = (t\rhd g) \; ((t \lhd g)\rhd h)$. Therefore $(G, \Gamma)$ is a matched pair, as claimed.

\medbreak The right $k$-linear action $\rho: \underline{G}^{op} \to \Aut_k(\C_A)$ together with the $\Gamma$-grading $\C_A = \bigoplus_{s \in \Gamma}(\C_A)_s$ from Subsections \ref{ss-action} and \ref{ss-grading} give rise to a $(G, \Gamma)$-crossed action on $\C_A$ as follows.

Let $g \in G$ and $t \in \Gamma$. Let also $X \in \C_A$, $Y \in (\C_A)_t$. By Corollary \ref{exists-gama}, we may identify canonically $\rho^g(X \otimes_A Y)$ and $\rho^{t\rhd g}(X) \otimes_A \rho^g(Y)$ as right $A$-modules in $\C$.
Under this identification we let, for every $g \in G$, $\gamma^g_{X, Y}$ to be the identity morphism $\rho^g(X \otimes_A Y) \to \rho^{t\rhd g}(X) \otimes_A \rho^g(Y)$.
Let also $\gamma_0^g = \alpha_g^{-1}: \rho^g(A) \to A$. Since  $\rho_0$, $\rho_2$, $\rho^g_2$ are identities, then we obtain the commutativity of the diagrams (a)--(e) in Subsection \ref{ss-crossed}.

\medbreak
As a $k$-linear category, the corresponding extension $(\C_A)^{(G, \Gamma)}$ coincides with $(\C_A)^G$ and the canonical functors give rise to an exact sequence of tensor categories
$$\Rep G \toto (\C_A)^{(G, \Gamma)} \overset{F}\toto \C_A.$$
Let $\varphi: \C \to \C^{(G, \Gamma)}$ be the $k$-linear functor defined in the form $$\varphi(X) = (X \otimes A, (\id_X \otimes \alpha_g^{-1})_{g \in G}), \quad X \in \C.$$

We claim that $\varphi$ is a tensor functor with monoidal isomorphisms $\varphi^2_{X, Y} = (F_A^2)_{X, Y}: \varphi(X) \otimes_A \varphi(Y) \to \varphi(X \otimes Y)$, and $\varphi^0 = F_A^0: A \to \varphi(\1)$. To prove this claim, we only need to show that $(F_A^2)_{X, Y}$ is a morphism in $(\C_A)^{(G, \Gamma)}$.

We have $\varphi(X \otimes Y) = (X \otimes Y \otimes A, (\id_{X \otimes Y} \otimes \alpha_g^{-1})_{g \in G})$ and  from the definition of the tensor product in $(\C_A)^{(G, \Gamma)}$ (c.f. Subsection \ref{ss-crossed}),
$$\varphi(X) \otimes \varphi(Y) = \left(X \otimes A \otimes_A Y \otimes A, (\bigoplus_{s \in \Gamma}\left(\id_X \otimes \alpha_{s\rhd g}^{-1}) \otimes_A (\id_Y \otimes \alpha_g^{-1})\right)_{g\in G}\right).$$

Recall that $(F_A^2)_{X, Y}: X \otimes A \otimes_A Y \otimes A \to X \otimes Y \otimes A$ is the morphism induced by $(\id_{X \otimes Y} \otimes m) \, (\id_X \otimes \sigma_Y \otimes \id_A): X \otimes A \otimes Y \otimes A \to X \otimes Y \otimes A$.

\medbreak
Let $g \in G$. On the one hand, using that $\sigma_Y \otimes \id_A = (\id_Y \otimes \sigma_A^{-1}) \, \sigma_{Y \otimes A}$ and the fact that $A$ is commutative, we have
\begin{align*}
(\id_{X \otimes Y} \otimes \alpha_g^{-1}) & \, (\id_{X \otimes Y} \otimes m) \, (\id_X \otimes \sigma_Y \otimes \id_A)  \\
& =
(\id_{X \otimes Y} \otimes \alpha_g^{-1}) (\id_{X \otimes Y} \otimes m) \, (\id_{X \otimes Y} \otimes \sigma^{-1}) \, (\id_X \otimes \sigma_{Y\otimes A})\\
& = (\id_{X \otimes Y} \otimes \alpha_g^{-1}m) \, (\id_X \otimes \sigma_{Y\otimes A}).
\end{align*}

On the other hand, let $Y \otimes A = \oplus_{s \in \Gamma} (Y \otimes A)_s$ be a decomposition of $Y\otimes A$ as a direct sum of homogeneous objects $(Y \otimes A)_s \in (\C_A)_s$. Then
\begin{align*}
(\id_{X \otimes Y}& \otimes m)  \, (\id_X \otimes \sigma_Y \otimes \id_A) \, \bigoplus_{s \in \Gamma}\left(\id_X \otimes \alpha_{s\rhd g}^{-1} \otimes \id_Y \otimes \alpha_g^{-1}\right)  \\
& = (\id_{X \otimes Y} \otimes m)  \, (\id_X \otimes \sigma_{Y \otimes A}) \, \bigoplus_{s \in \Gamma}\left(\id_X \otimes \alpha_{s\rhd g}^{-1} \otimes \id_{\rho^g((Y \otimes A)_s)}\right) \, (\id_{X \otimes A \otimes Y}  \otimes \alpha_g^{-1}) \\
& = (\id_{X \otimes Y} \otimes m) \,  \bigoplus_{s \in \Gamma}\left(\id_X \otimes \sigma_{\rho^g((Y \otimes A)_s)}(\alpha_{s\rhd g}^{-1} \otimes \id_{\rho^g(Y \otimes A)_s)})   \right)\, (\id_{X \otimes A \otimes Y}  \otimes \alpha_g^{-1}) \\
& = \bigoplus_{s \in \Gamma}\left(\id_X \otimes \mu_{\rho^g((Y \otimes A)_s)}\sigma_{\rho^g((Y \otimes A)_s)}(\alpha_{(s\rhd g)^{-1}} \otimes \id_{\rho^g((Y \otimes A)_s)})\right) \\
& = \bigoplus_{s \in \Gamma}\left(\id_X \otimes \mu_{\rho^g((Y \otimes A)_s)} (\id_{\rho^g((Y \otimes A)_s)} \otimes \alpha_{g^{-1}}) \sigma_{\rho^g((Y \otimes A)_s)}\right) \\
& = (\id_{X \otimes Y} \otimes m) \, (\id_X \otimes \id_Y \otimes \alpha_g^{-1}  \otimes \id_A) \, \bigoplus_{s \in \Gamma}\left(\id_X \otimes (\id_{(Y \otimes A)_s} \otimes \alpha_g^{-1}) \sigma_{(Y \otimes A)_s}\right) \\
& = (\id_{X \otimes Y} \otimes m)  \, (\id_X \otimes \id_Y \otimes \alpha_g^{-1}  \otimes \alpha_g^{-1}) \, (\id_X \otimes \sigma_{Y \otimes A})\\
& = (\id_{X \otimes Y} \otimes \alpha_g^{-1} m)  \,  (\id_X \otimes \sigma_{Y \otimes A}).
\end{align*}
The fourth equality by Proposition \ref{a-s}, since for all $s\in \Gamma$ $\rho^g((Y\otimes A)_s)$ is homogeneous of degree $s\lhd g$ and $(s \rhd g)^{-1} = (s\lhd g) \rhd g^{-1}$.

Thus we obtain that $$(\id_{X \otimes Y} \otimes \alpha_g^{-1}) \, \rho^g((F_A^2)_{X, Y}) = \left((\id_X \otimes \alpha_{s\rhd g}^{-1}) \otimes_A (\id_Y \otimes_A {\alpha_g}^{-1})\right) \, (F_A^2)_{X, Y},$$ for all $g \in G$, that is, $(F_A^2)_{X, Y}$ is a morphism in $(\C_A)^{(G, \Gamma)}$, as claimed.

\medbreak
Moreover, since $\Rep G \cong \langle \1_{\C_A}\rangle^G$ is an equivariantization, $\varphi$ induces by restriction an equivalence of tensor categories $\langle A \rangle \to \langle \1_{\C_A}\rangle^G$. Then we get a commutative diagram of tensor functors
$$\begin{CD}\langle A \rangle @>>> \C @>{F_A}>> \C_A \\
@VV{\varphi}V @VV{\varphi}V @VV{=}V\\
\langle \1_{\C_A}\rangle^G @>>> (\C_A)^{(G, \Gamma)} @>{F}>> \C_A.
\end{CD}$$
This implies that $\varphi$ is an equivalence of tensor categories and finishes the proof of the theorem.
\end{proof}

Recall that a fusion category $\C$ is called \emph{weakly group-theoretical} if $\C$ is categorically Morita equivalent to a nilpotent fusion category  \cite{ENO2}.

\begin{corollary}\label{wgt-ext-rep} Let  $\Rep G \toto \C \toto \D$ be an abelian exact sequence of finite tensor categories. Then $\C$ is Morita equivalent to a $G \bowtie \Gamma$-graded extension of a tensor subcategory $\D_0$ of $\D$. In particular, $\C$ is a weakly group-theoretical fusion category if and only if $\D$ is a weakly group-theoretical fusion category.
\end{corollary}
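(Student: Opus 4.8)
The plan is to assemble this corollary from the structural results already at our disposal, the only genuine work being the correct matching of notation. First I would pass to the equivalent exact sequence \eqref{ab-tsec}, so that $\D \cong \C_A$. By Theorem \ref{main-ca}, the category $\C_A$ carries a $(G,\Gamma)$-crossed action, for the matched pair $(G,\Gamma)$ and the permutation actions $\rhd, \lhd$ constructed in Subsections \ref{def-rhd} and \ref{def-lhd}, with respect to which $\C \cong (\C_A)^{(G,\Gamma)}$ as extensions of $\C_A$ by $\Rep G$. Transporting the $\Gamma$-grading of $\C_A$ along the equivalence $\C_A \cong \D$, the neutral homogeneous component $(\C_A)_e$ becomes a tensor subcategory $\D_0 \subseteq \D$.

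To obtain the first assertion I would apply Theorem \ref{grading-crossed} to the crossed tensor category $\C_A$ (in the role of ``$\C$'' there) and its crossed extension $\C \cong (\C_A)^{(G,\Gamma)}$ (in the role of ``$\C^{(G,\Gamma)}$''). This yields a $G\bowtie\Gamma$-grading on $\Fun_\C(\C_A)^{op}$ whose trivial homogeneous component is $(\C_A)_e \cong \D_0$; moreover, since the $\Gamma$-grading of $\C_A$ is faithful by its very construction in Subsection \ref{ss-grading}, this $G\bowtie\Gamma$-grading is faithful as well. As noted in the Remark of Subsection \ref{ss-action}, $\C_A$ is an exact indecomposable $\C$-module category, so by the definition of categorical Morita equivalence $\C$ is Morita equivalent to $\Fun_\C(\C_A)^{op}$. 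Hence $\C$ is Morita equivalent to a $G\bowtie\Gamma$-graded extension of $\D_0$, as required.

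For the ``in particular'' statement I would invoke Corollary \ref{wgt-crossed}, applied to the crossed action on $\C_A$: it gives at once that $\C \cong (\C_A)^{(G,\Gamma)}$ is a weakly group-theoretical fusion category if and only if $\C_A \cong \D$ is one. Alternatively, one can argue purely from the first assertion: weak group-theoreticity is invariant under categorical Morita equivalence, and by \cite{ENO2} it transfers in both directions between a group-graded extension and its neutral component (forward because the class is closed under graded extensions, backward because the neutral component is a fusion subcategory). Thus $\C$ is weakly group-theoretical if and only if $\D_0$ is, and since $\D$ is $\Gamma$-graded with neutral component $\D_0$, the latter holds if and only if $\D$ is weakly group-theoretical.

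The main point is that there is no new hard analysis here: the entire mathematical content is carried by Theorem \ref{main1}, Theorem \ref{grading-crossed} and Corollary \ref{wgt-crossed}. The only delicate part is the bookkeeping of identifications --- confirming that under $\D \cong \C_A$ the category $\C$ plays the role of the crossed extension $(\C_A)^{(G,\Gamma)}$, that $\D_0 = (\C_A)_e$ is a \emph{tensor} subcategory of $\D$, and that $\C_A$ is an exact indecomposable $\C$-module category so that $\Fun_\C(\C_A)^{op}$ indeed lies in the Morita class of $\C$. Once these are pinned down, both assertions follow immediately.
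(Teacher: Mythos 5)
Your proposal is correct and follows essentially the same route as the paper: the first assertion is deduced from Theorems \ref{main-ca} and \ref{grading-crossed} (applied to $\C_A \cong \D$ with $\C \cong (\C_A)^{(G,\Gamma)}$), and the ``in particular'' statement from Corollary \ref{wgt-crossed}. The bookkeeping you make explicit --- faithfulness of the $\Gamma$-grading by construction of $\Gamma$ as the support, $(\C_A)_e$ being a tensor subcategory, and $\C_A$ being an exact indecomposable $\C$-module category so that $\Fun_\C(\C_A)^{op}$ lies in the Morita class of $\C$ --- is precisely what the paper's two-line proof leaves implicit.
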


\begin{proof} The first statement follows from Theorems \ref{main-ca} and \ref{grading-crossed}. The second statement from Corollary \ref{wgt-crossed}.
\end{proof}

\begin{corollary} Let  $\Rep G \toto \C \toto \D$ be an abelian exact sequence of finite tensor categories. Assume in addition that the universal grading group of $\D$ is trivial. Then \eqref{ext-rep} is an equivariantization exact sequence. In other words, $\C \cong \D^G$ as extensions of $\D$ by $\Rep G$.
\end{corollary}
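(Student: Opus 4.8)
The plan is to feed the hypothesis into Theorem \ref{main-ca} and then show that the triviality of the universal grading group of $\D$ forces the grading group $\Gamma$ to collapse, so that the $(G,\Gamma)$-crossed extension degenerates into an ordinary equivariantization. First I would identify $(\E)$ with the equivalent sequence $\Rep G \toto (\C_A)^{(G,\Gamma)} \toto \C_A$ produced by Theorem \ref{main-ca}, where $\C_A \cong \D$ and $\Gamma \subseteq \Aut_A(A \otimes A)$ is the support of the grading built in Subsection \ref{ss-grading}; by construction the $\Gamma$-grading $\C_A = \bigoplus_{s \in \Gamma}(\C_A)_s$ is faithful.

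Next I would argue that $\Gamma = \{e\}$. If $X \in (\C_A)_s$ is a homogeneous simple object, then its dual $X^*$ is simple and homogeneous, and since $\1$ occurs as a subobject of $X^* \otimes X$ one gets $\partial(X^*) = s^{-1}$; hence $X \otimes X^* \in (\C_A)_e$. Because the neutral component $(\C_A)_e$ is a tensor Serre subcategory of $\C_A$ containing every object of the form $X \otimes X^*$ with $X$ simple, it contains the adjoint subcategory $(\C_A)_{ad}$. Now the hypothesis that $\D \cong \C_A$ has trivial universal grading group means precisely that $(\C_A)_{ad} = \C_A$, so $(\C_A)_e = \C_A$, and faithfulness of the $\Gamma$-grading yields $\Gamma = \{e\}$.

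Finally, with $\Gamma$ trivial the matched pair $(G,\{e\})$ and its mutual actions are trivial, so $e \rhd g = g$ for all $g \in G$. Consequently the canonical isomorphisms $\gamma^g_{X,Y}\colon \rho^g(X \otimes_A Y) \to \rho^{e \rhd g}(X) \otimes_A \rho^g(Y) = \rho^g(X) \otimes_A \rho^g(Y)$ of Corollary \ref{exists-gama} become monoidal constraints making each $\rho^g$ into a tensor autoequivalence of $\C_A$, and the coherence diagrams (a)--(e) of Subsection \ref{ss-crossed} reduce to the axioms for an action of $G$ by tensor autoequivalences. By the first example of Subsection \ref{ss-crossed}, the resulting $(G,\{e\})$-crossed extension $(\C_A)^{(G,\{e\})}$ is exactly the equivariantization $(\C_A)^G$. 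Combining this with Theorem \ref{main-ca} identifies $(\E)$ with the equivariantization sequence $\Rep G \toto \D^G \toto \D$, that is, $\C \cong \D^G$ as extensions of $\D$ by $\Rep G$.

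The only genuinely delicate point is the second step: one must be sure that the abstractly defined support $\Gamma$ really constitutes a faithful grading of $\D$ in the sense governed by the adjoint subcategory, so that $(\C_A)_{ad} = \C_A$ can be applied to it; once $\Gamma = \{e\}$ is secured, the reduction of the crossed action to a plain $G$-action is purely formal bookkeeping already encapsulated in the definitions of Subsection \ref{ss-crossed}.
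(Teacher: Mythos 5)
Your proposal is correct and follows essentially the same route as the paper: apply Theorem \ref{main-ca}, deduce $\Gamma = \{e\}$ from the faithfulness of the $\Gamma$-grading together with the triviality of the universal grading group, and observe that a $(G,\{e\})$-crossed action is just an action by tensor autoequivalences, so the crossed extension is the equivariantization. The only difference is that where the paper cites the standard fact that every faithful grading factorizes over the universal grading, you prove it in-line via the adjoint subcategory (note the small slip: $\1$ is a \emph{quotient} of $X^* \otimes X$, or a subobject of $X \otimes X^*$, but since the homogeneous components are Serre subcategories the conclusion $\partial(X^*) = s^{-1}$ holds regardless).
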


\begin{proof} By Theorem \ref{main-ca}, there exists a matched pair of groups $(G, \Gamma)$ such that $\C \cong \D^{(G, \Gamma)}$. On the other hand, every faithful grading of $\D$ factorizes over the universal grading. Then the assumption implies that the group $\Gamma$ is trivial. Hence the $(G, \Gamma)$-crossed action on $\C$ reduces to an action of $G$ by tensor autoequivalences and $\D^{(G, \Gamma)} \cong \D^{G}$ is the corresponding equivariantization.
\end{proof}

\section{Extensions $\vect_G \toto \C \toto \D$}\label{section-pt}

Let $G$ be a finite group.
We shall consider in this section an exact sequence of finite tensor categories
\begin{equation}\label{ext-pt}\vect_G \overset{f}\toto \C \overset{F}\toto \D.
\end{equation}

\begin{remark} It was shown in \cite[Theorem 2.9]{indp-exact} that any dominant tensor functor $F : \C \to \D$ between finite tensor categories $\C$ and $\D$ whose induced central algebra decomposes as a direct sum of invertible objects of $\C$ is in fact normal and induces an exact sequence \eqref{ext-pt}, where $G$ is the group of isomorphism classes of simple direct summands of $A$ in $\C$.
\end{remark}

\begin{lemma} Let $H$ be the induced Hopf algebra of \eqref{ext-pt}. Then $H$ is isomorphic to the group algebra $kG$.
\end{lemma}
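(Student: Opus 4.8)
The plan is to read off the structure of $H$ directly from the comodule category it corepresents, exploiting that $\C' = \vect_G$ is pointed and semisimple. Recall that, by the construction of the induced Hopf algebra, $H = \coend(\omega)$ with $\omega = \Hom_\D(\1, Ff(-))\colon \vect_G \to \vect$, and that there is an equivalence of tensor categories $\vect_G = \C' \cong \comod H$. Since \eqref{ext-pt} is an exact sequence of finite tensor categories it is perfect by Lemma \ref{finite-perfect}, so $H$ is finite dimensional.

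First I would observe that $\vect_G$ is a semisimple tensor category all of whose simple objects $k_g$, $g\in G$, are invertible. Transporting these two properties across the tensor equivalence $\comod H \cong \vect_G$, one obtains that $H$ is cosemisimple and that every simple $H$-comodule is one dimensional; equivalently, $H$ is a cosemisimple pointed coalgebra. Hence $H$ coincides with its own coradical, which is the group coalgebra $kG(H)$ spanned by the set $G(H)$ of grouplike elements of $H$. As grouplikes are linearly independent, are closed under multiplication, and contain the inverses of their members, $G(H)$ is a group and $H = kG(H)$ as a Hopf algebra.

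It then remains to identify $G(H)$ with $G$. For this I would use that the tensor equivalence $\comod H \cong \vect_G$ restricts to an isomorphism between the groups of isomorphism classes of invertible objects on both sides. On the one hand, the invertible objects of $\comod H$ are precisely the one dimensional comodules, whose isomorphism classes are in bijection with $G(H)$, the group law being induced by the tensor product, that is, by the multiplication of $H$. On the other hand, the group of isomorphism classes of invertible objects of $\vect_G$ is $G$. Therefore $G(H) \cong G$ and $H \cong kG$, as claimed.

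The argument is essentially formal and I do not expect a serious obstacle; the only point requiring care is that the conclusion must not rely on any compatibility between the equivalence $\comod H \cong \vect_G$ and the forgetful functors. This is guaranteed because being cosemisimple and pointed is an intrinsic property of the comodule category. A priori one might worry that the fiber functor $\omega$ differs from the standard forgetful functor of $\vect_G$ by a class in $H^2(G,k^\times)$ (the monoidal structures on the underlying functor form a torsor over such classes); however, since $kG$ admits no nontrivial two-cocycle twists of its multiplication—its grouplikes force every such twist to cancel—all these fiber functors reconstruct the same Hopf algebra $kG$, so the potential ambiguity is harmless.
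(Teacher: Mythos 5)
Your proof is correct and takes essentially the same route as the paper's: transport pointedness and semisimplicity of $\vect_G$ across the equivalence $\comod H \cong \vect_G$ to conclude $H = kG(H)$, then identify $G(H)$ with $G$ via the group of (isomorphism classes of) invertible objects. If anything, you are slightly more careful than the paper, which passes from ``every simple $H$-comodule is one-dimensional'' directly to $H \cong kG(H)$ without explicitly invoking cosemisimplicity (needed, since pointed non-cosemisimple Hopf algebras exist); your closing remark about fiber functors is harmless but unnecessary, since the argument only uses the intrinsic tensor equivalence.
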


\begin{proof} By definition of $H$, there is an equivalence of tensor categories $\comod H \cong \vect_G \cong \comod kG$. Then every simple $H$-comodule is one-dimensional and therefore $H$ is cocommutative, that is, $H \cong kG(H)$. In addition, $G(H)$ is isomorphic to the group of invertible objects of $\comod H$, whence $G(H) \cong G$. This finishes the proof of the lemma.
\end{proof}

Let $(A, \sigma)$ be the induced central algebra of \eqref{ext-pt}. Then there exist invertible objects $X_g \in \C$, $g \in G$, such that $X_g \otimes X_h \cong X_{gh}$, for all $g, h \in G$, and $A = \bigoplus_{g \in G} X_g$. The multiplication map $m: A \otimes A \to A$ is given componentwise by isomorphisms $X_g \otimes X_h \to X_{gh}$, and the half-brading $\sigma_{X_h}: A \otimes X_h \to X_h \otimes A$, $h \in G$, is given componentwise by isomorphisms
\begin{equation}\label{sigma-a}X_g \otimes X_h \to X_h \otimes X_{h^{-1}gh}, \quad g \in G.
\end{equation}
See Remark \ref{a=h}.

\medbreak Let $X$ be a simple object of $\C$. Then $X_g \otimes X$ and $X \otimes X_g$ are simple objects, for all $g \in G$. In addition, 
$A \otimes X$ and $X \otimes A$ decompose into  direct sums $A \otimes X = \bigoplus_{g \in G}X_g \otimes X$ and $X \otimes A = \bigoplus_{g \in G}X \otimes X_g$, respectively. Hence, for every $g \in G$, the isomorphism $\sigma_X: A \otimes X \to X \otimes A$ induces  an isomorphism $$\sigma^g_X: X_g \otimes X \to X \otimes X_{h^{-1}},$$ for a unique $h = \partial(X)(g) \in G$.
This gives rise to a bijection $\partial(X): G \to G$.

\medbreak
The fact that the multiplication morphism $m: A \otimes A \to A$ is a morphism in the center $\Z(\C)$ implies that $\partial(X)$ is a group automorphism of $G$.
Furthermore, we have:

\begin{proposition} The map $\partial: \Irr(\C) \to \Aut(G)$ defines an $\Aut(G)$-grading on $\C$.
\end{proposition}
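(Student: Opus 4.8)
The plan is to reduce the statement to the general criterion for constructing gradings recalled in Section~\ref{section-crossed}: a map $\partial\colon \Irr(\C) \to \Gamma'$ into a group $\Gamma'$ satisfying $\partial(Z) = \partial(X)\,\partial(Y)$ for all simple $X, Y$ and every composition factor $Z$ of $X \otimes Y$ automatically defines a $\Gamma'$-grading of $\C$. Thus, with $\Gamma' = \Aut(G)$, it suffices to prove that $\partial$ is multiplicative on composition factors; everything else is supplied by that criterion.

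First I would record how $\partial$ interacts with tensor products at the level of the half-braiding $\sigma$ of the induced central algebra $A = \bigoplus_{g} X_g$. For a simple object $W$, the isomorphism $\sigma_W\colon A \otimes W \to W \otimes A$ permutes the invertible summands, carrying $X_g \otimes W$ onto $W \otimes X_{\partial(W)(g)^{-1}}$; this is the defining property of $\partial(W)$. Feeding the summand $X_g \otimes X \otimes Y$ into the half-braiding axiom $\sigma_{X \otimes Y} = (\id_X \otimes \sigma_Y)(\sigma_X \otimes \id_Y)$ of Subsection~\ref{braid} and following the index on the $A$-factor through the two steps, $X_g$ first becomes $X_{\partial(X)(g)^{-1}}$ and then $X_{\partial(Y)(\partial(X)(g)^{-1})^{-1}}$. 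At this point I would invoke the fact, already established from the centrality of $m$, that $\partial(X)$ and $\partial(Y)$ are genuine automorphisms of $G$, so that they commute with inversion; the nested inverses then collapse and show that the permutation of the summands of $A$ realized by $\sigma_{X \otimes Y}$ is exactly $\partial(X)\,\partial(Y)$.

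Next I would pass from the object $X \otimes Y$ to an arbitrary composition factor $Z$ using naturality of the half-braiding in its second argument. For an inclusion $i\colon Z \hookrightarrow X \otimes Y$ of a simple subobject one has $(i \otimes \id_A)\,\sigma_Z = \sigma_{X \otimes Y}\,(\id_A \otimes i)$; comparing both sides on each summand $X_g \otimes Z$, and using that $Z$ is simple and the $X_g$ are invertible, forces the permutation attached to $\sigma_Z$ to agree with the one attached to $\sigma_{X \otimes Y}$, whence $\partial(Z) = \partial(X)\,\partial(Y)$. Running the same argument on simple quotients covers all composition factors, since in a finite-length category every composition factor occurs as a subobject of a quotient of $X \otimes Y$. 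This is precisely the multiplicativity needed, and the proposition then follows from the criterion quoted above.

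The main obstacle is the bookkeeping in the middle step: one must track the inverse occurring in the index $X_{\partial(W)(g)^{-1}}$ and the order in which the two automorphisms compose, and it is crucial that the maps $\partial(W)$ be honest group automorphisms rather than arbitrary bijections, since this is what lets the inverses cancel and produces a composition inside $\Aut(G)$ in the correct order for the grading criterion. The naturality argument in the final step is then routine, its only delicate point being to check that restricting to a simple subquotient does not change the induced permutation of the summands of $A$.
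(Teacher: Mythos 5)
Your overall route is the same as the paper's: verify that $\partial$ is multiplicative on composition factors (so that the grading criterion recalled in Section~\ref{section-crossed} applies), compute $\sigma_{X\otimes Y}$ on the summands $X_g\otimes X\otimes Y$ from the half-braiding axiom, and descend to composition factors by naturality of $\sigma$; your final naturality step is fine. The genuine problem is the middle computation. You read the definition as saying that $\sigma_W$ carries $X_g\otimes W$ onto $W\otimes X_{(\partial(W)(g))^{-1}}$, where the inverse is that of the \emph{group element} $\partial(W)(g)$, and you then use $\partial(Y)\in\Aut(G)$ to ``collapse'' the nested inverses. But that collapse removes the inverses altogether: the summand permutation realized by $\sigma_{X\otimes Y}$ comes out as $g\mapsto \partial(Y)(\partial(X)(g))$, i.e.\ $\partial(Y)\partial(X)$ --- not $\partial(X)\partial(Y)$ as you assert --- and, feeding this back through your own defining convention, a composition factor $Z$ would acquire degree $\partial(Z)(g)=\bigl(\partial(Y)\partial(X)(g)\bigr)^{-1}$, which is neither $\partial(X)\partial(Y)$ nor, in general, an automorphism at all. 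For instance, if $\partial(X)=\partial(Y)=\id$, your computation forces $\partial(Z)(g)=g^{-1}$, which is absurd. Indeed, your reading is already inconsistent with the facts you invoke: applied to the summands of $A$, formula \eqref{sigma-a} would give $\partial(X_h)(g)=h^{-1}g^{-1}h$, which is a homomorphism only when $G$ is abelian, contradicting both the assertion (deduced from centrality of $m$) that $\partial(X)\in\Aut(G)$ and the remark that $\partial(X_h)$ is the inner automorphism induced by $h$.

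The convention the paper actually uses --- made unambiguous by the display $X_g\otimes X\otimes Y\to X\otimes Y\otimes X_{\partial(Y)^{-1}\partial(X)^{-1}(g)}$ in its proof --- is that $\sigma_X$ sends the summand with index $g$ to the summand with index $\partial(X)^{-1}(g)$, the inverse \emph{map} applied to $g$. With this convention the two-step computation simply composes index permutations: the index map of $\sigma_{X\otimes Y}$ is $\partial(Y)^{-1}\circ\partial(X)^{-1}=(\partial(X)\partial(Y))^{-1}$, and inverting once more gives $\partial(Z)=\partial(X)\partial(Y)$, in the correct order, for every composition factor $Z$. Note that this step requires no appeal to $\partial(X),\partial(Y)$ being automorphisms --- it holds for arbitrary bijections of $G$; the automorphism property is needed only so that $\partial$ takes values in $\Aut(G)$ rather than in the symmetric group on $G$. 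That your argument leans ``crucially'' on the automorphism property to cancel inverses is precisely the symptom of the misreading, and as written the multiplicativity claim at the heart of your proof is false under your stated conventions.
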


\begin{proof} Let $X$ and $Y$ be simple objects of $\C$.
The relation $\sigma_{X\otimes Y} =  (\id_X \otimes \sigma_Y)(\sigma_X \otimes \id_Y)$ implies that $\sigma_{X \otimes Y}: A \otimes X\otimes Y \to X\otimes Y \otimes A$ is given componentwise by isomorphisms
\begin{equation*} X_g \otimes X \otimes Y \to X \otimes Y \otimes X_{\partial(Y)^{-1}\partial(X)^{-1}(g)}, \quad g \in G.
\end{equation*}
Hence, from the naturality of $\sigma: A \otimes - \to - \otimes A$, we get that
$\partial(Z) = \partial(X) \partial(Y)$ for every composition factor $Z$ of $X \otimes Y$. This proves the proposition. \end{proof}

Let $\C_0$ be the trivial homogeneous component of the $\Aut(G)$-grading of $\C$.
Thus, for every simple object $X$ of $\C_0$, $\sigma_X: A \otimes X \to X \otimes A$ is given componentwise by isomorphisms
\begin{equation}\label{sigma-g} \sigma^g_X: X_g \otimes X \to X \otimes X_g, \quad g \in G.
\end{equation}

\begin{remark} For a simple object $X_h$, $h \in G$, $\partial(X_h) \in \Aut(G)$ coincides with the inner automorphism induced by $h$ (c.f. \eqref{sigma-a}). In particular, $X_h$ belongs to $\C_0$ if and only if $h \in Z(G)$. Hence the functor $f$ induces an equivalence of tensor categories $f_0: \vect_{Z(G)} \to \langle A \rangle \cap \C_0$.
\end{remark}

Let $\D_0 \subseteq \D$ be the full subcategory whose objects are subquotients of $F(X)$, $X \in \C_0$.
Then $\D_0$ is a tensor subcategory of $\D$ and the tensor functor $F$ induces by restriction a dominant tensor functor $F_0: \C_0 \to \D_0$.
Moreover, $\KER_{F_0} = \langle A \rangle \cap \C_0$ and there is an exact sequence of finite tensor categories
\begin{equation}\label{zg}\vect_{Z(G)} \overset{f_0}\toto \C_0 \overset{F_0}\toto \D_0.
\end{equation}

Observe that since the center $Z(G)$ is an abelian group, then there is a strict   equivalence of tensor categories $\vect_{Z(G)} \cong \Rep \widehat{Z(G)}$. Hence \eqref{zg} is an abelian exact sequence of finite tensor categories (c.f. Example \ref{g-abelian}). As a consequence of Theorem \ref{main-ca}, $\C_0 \cong \D_0^{(Z(G), \Gamma)}$ for a suitable matched pair of groups $(Z(G), \Gamma)$.

\medbreak
The results of this section are summarized in the following:

\begin{theorem}\label{main-pt} Let \eqref{ext-pt} be an exact sequence of fini\textit{}te tensor categories. Then there exist a finite group $\Gamma$ endowed with mutual actions by permutations $\lhd: \Gamma \times Z(G) \to \Gamma$ and $\rhd: \Gamma \times Z(G) \to Z(G)$ making $(Z(G), \Gamma)$ a matched pair, and an $\Aut(G)$-grading on $\C$ whose neutral homogeneous component is a $(Z(G), \Gamma)$-crossed extension of a tensor subcategory of $\D$. \qed
\end{theorem}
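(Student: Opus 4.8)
The plan is to reduce the statement to the abelian case settled in Theorem \ref{main-ca}, by first producing an $\Aut(G)$-grading on $\C$ from the central structure of the induced algebra and then applying that theorem to the neutral component. So first I would analyze the induced central algebra $(A, \sigma) \in \Z(\C)$ of \eqref{ext-pt}. Since the induced Hopf algebra is $kG$, the object $A$ decomposes as $\bigoplus_{g \in G} X_g$ with each $X_g$ invertible and $X_g \otimes X_h \cong X_{gh}$, so that $A$ is a pointed algebra indexed by $G$. The half-braiding $\sigma$ must then permute these invertible summands: for each simple $X$ and each $g \in G$, $\sigma_X$ restricts to an isomorphism identifying $X_g \otimes X$ with $X \otimes X_{g'}$ for a unique $g' \in G$, whence a bijection $\partial(X)$ of $G$. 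That $\partial(X)$ is in fact a group automorphism, rather than a mere permutation, is forced by the requirement that the multiplication $m \colon A \otimes A \to A$ be a morphism in $\Z(\C)$; and the hexagon identity $\sigma_{X \otimes Y} = (\id_X \otimes \sigma_Y)(\sigma_X \otimes \id_Y)$ yields $\partial(Z) = \partial(X)\partial(Y)$ for every composition factor $Z$ of $X \otimes Y$. By the grading criterion recalled in Section \ref{section-crossed}, the map $\partial \colon \Irr(\C) \to \Aut(G)$ then defines the desired $\Aut(G)$-grading $\C = \bigoplus_\phi \C_\phi$.

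Next I would isolate the neutral component $\C_0 = \C_{\id}$ and set $\D_0 \subseteq \D$ to be the full subcategory of subquotients of $F(X)$, $X \in \C_0$; this is a tensor subcategory of $\D$, and the restriction $F_0 \colon \C_0 \to \D_0$ is dominant. The crucial computation is to identify its kernel. An invertible $X_h$ lies in $\C_0$ precisely when the inner automorphism it induces is trivial, i.e. when $h \in Z(G)$, so that $\langle A \rangle \cap \C_0 \cong \vect_{Z(G)}$, and one checks that this coincides with $\KER_{F_0}$. This produces the exact sequence \eqref{zg}, namely $\vect_{Z(G)} \toto \C_0 \toto \D_0$, of finite tensor categories. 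Because $Z(G)$ is abelian there is an equivalence $\vect_{Z(G)} \cong \Rep \widehat{Z(G)}$, so \eqref{zg} is an abelian exact sequence in the sense of Section \ref{section-abelian}.

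Finally I would feed \eqref{zg} into Theorem \ref{main-ca}. It returns a finite group $\Gamma$ together with permutation actions $\rhd$ and $\lhd$ making $(\widehat{Z(G)}, \Gamma)$ a matched pair and a crossed action on $\D_0$ with $\C_0 \cong \D_0^{(\widehat{Z(G)}, \Gamma)}$; identifying $\widehat{Z(G)}$ with $Z(G)$ through finite-abelian duality rewrites this as the asserted $(Z(G), \Gamma)$-crossed extension of the tensor subcategory $\D_0$ of $\D$. Combined with the $\Aut(G)$-grading of the first paragraph, this is exactly the claim. I expect the middle step to be the main obstacle: everything downstream hinges on recognizing \eqref{zg} as a genuine abelian exact sequence, which requires verifying both that $\partial(X)$ really lands in $\Aut(G)$ (so that the $\Aut(G)$-grading exists at all) and that $\D_0$ is a tensor subcategory with $\KER_{F_0}$ exactly $\vect_{Z(G)}$, before Theorem \ref{main-ca} can be invoked.
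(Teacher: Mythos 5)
Your proposal is correct and follows essentially the same route as the paper: decompose the induced central algebra $A = \bigoplus_{g \in G} X_g$ into invertible summands, use the half-braiding and the fact that $m$ is a morphism in $\Z(\C)$ to define the $\Aut(G)$-grading via $\partial$, identify $\KER_{F_0} = \langle A \rangle \cap \C_0 \cong \vect_{Z(G)}$ to obtain the abelian exact sequence $\vect_{Z(G)} \toto \C_0 \toto \D_0$, and then apply Theorem \ref{main-ca}. The only cosmetic difference is that you make explicit the identification $\widehat{Z(G)} \cong Z(G)$, which the paper leaves implicit when invoking Example \ref{g-abelian} and stating the matched pair as $(Z(G), \Gamma)$.
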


Combining Theorem \ref{main-pt} with Corollary \ref{wgt-ext-rep} we obtain:

\begin{corollary}\label{wgt-pt} Let $\vect_G \toto \C \toto \D$ be an exact sequence of finite tensor categories. Then $\C$ is a weakly group-theoretical fusion category if and only if $\D$ is a weakly group-theoretical fusion category. \qed
\end{corollary}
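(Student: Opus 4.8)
The plan is to deduce the corollary directly from Theorem \ref{main-pt} and Corollary \ref{wgt-ext-rep} by treating the two implications separately. I would first record that $\C$ is a fusion category if and only if $\D$ is: when $\C$ is semisimple, $\D$ is a dominant (hence semisimple) image of $\C$; conversely, the reductions used below exhibit $\C$ as a graded extension of a crossed extension of a fusion subcategory of $\D$, so that $\C$ is fusion whenever $\D$ is, using that quotients and graded extensions of fusion categories are fusion and that a crossed extension is fusion if and only if its base is, by \cite[Proposition 6.2]{crossed-action}. Thus throughout we may assume all three categories are fusion categories and only the weakly group-theoretical property is at issue.

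For the implication ``$\C$ weakly group-theoretical $\Rightarrow$ $\D$ weakly group-theoretical'' I would invoke only the fact that $F$ is dominant in \eqref{ext-pt}: then $\D$ is a dominant image of $\C$, and since the class of weakly group-theoretical fusion categories is closed under dominant images \cite{ENO2}, $\D$ is weakly group-theoretical. This direction uses none of the structural content of Theorem \ref{main-pt}.

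The converse is where Theorem \ref{main-pt} and Corollary \ref{wgt-ext-rep} are combined. Assume $\D$ is weakly group-theoretical, and let $\C_0 = \bigoplus$ denote the neutral component of the $\Aut(G)$-grading and $\D_0 \subseteq \D$ the tensor subcategory appearing in \eqref{zg}. I would proceed along the chain: (i) $\D_0$ is a fusion subcategory of $\D$, hence weakly group-theoretical by closure under fusion subcategories; (ii) the sequence $\vect_{Z(G)} \toto \C_0 \toto \D_0$ of \eqref{zg} is an abelian exact sequence, since $\vect_{Z(G)} \cong \Rep \widehat{Z(G)}$ as $Z(G)$ is abelian, so Corollary \ref{wgt-ext-rep} applies and yields that $\C_0$ is weakly group-theoretical; (iii) the $\Aut(G)$-grading $\C = \bigoplus_{\phi \in \Aut(G)} \C_\phi$ exhibits $\C$ as a graded extension of $\C_0$, whence $\C$ is weakly group-theoretical by closure under graded extensions \cite{ENO2}. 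Together with the forward implication this establishes the equivalence.

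The only genuinely delicate point is the asymmetry between $\D$ and its subcategory $\D_0$: closure under fusion subcategories gives ``$\D$ weakly group-theoretical $\Rightarrow$ $\D_0$ weakly group-theoretical'' but not the reverse, which is precisely why the forward implication must be routed through the dominant functor $F \colon \C \to \D$ rather than through the relation $\C_0 \leftrightarrow \D_0$. I do not expect a substantial obstacle beyond this; the work is entirely in matching each reduction to the correct closure property of the class of weakly group-theoretical fusion categories and in checking the semisimplicity transfer that justifies speaking of fusion categories at each stage.
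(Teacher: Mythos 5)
Your proposal is correct and takes essentially the same approach as the paper: the paper's (implicit) proof is precisely the combination of Theorem \ref{main-pt} with Corollary \ref{wgt-ext-rep} and the closure properties of weakly group-theoretical categories from \cite{ENO2}, i.e.\ your chain $\D \Rightarrow \D_0 \Rightarrow \C_0 \Rightarrow \C$ (fusion subcategory, abelian exact sequence, graded extension) together with the dominant-image argument for the converse. Your explicit handling of the $\D$ versus $\D_0$ asymmetry and of the semisimplicity transfer just spells out what the paper leaves to the reader.
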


\section{Semisolvable Hopf algebras}\label{section-hopf}

A short exact sequence of  Hopf algebras is a sequence of Hopf algebra maps
\begin{equation}\label{sec-hopf}k \toto H' \overset{i}\toto H \overset{\pi}\toto H'' \toto k,
\end{equation} such that

\medbreak (a)  $i$ is injective and  $\pi$ is surjective,

(b)  $\ker \pi = Hi(H')^+$, where $i(H')^+$ is the augmentation ideal of $i(H')$, and

(c)  $i(H') = {}^{\co \pi}H = \{h \in H: \, (\pi \otimes \id)\Delta (h) = 1 \otimes h\}$.

\medbreak 
Recall that every short exact sequence of finite dimensional Hopf algebras \eqref{sec-hopf}
induces an exact sequence of tensor categories
\begin{equation}\label{sec-rep}H''\!\!-\!\textrm{mod}  \overset{\pi^*} \toto H\!\!-\!\textrm{mod} \overset{i^*}\toto H'\!\!-\!\textrm{mod}.
\end{equation}
See \cite[Subsection 3.2]{tensor-exact}.

\medbreak 
A finite dimensional Hopf algebra $H$ is called \emph{weakly group-theoretical} if the category $H-$mod (or, equivalently, the category $\comod H$) is weakly group-theoretical.

\begin{remark}\label{quotient} Suppose that $H$ is a finite dimensional Hopf algebra that fits into an exact sequence \eqref{sec-hopf}. Note that the tensor functor $$\Hom_{H'}(k, i^* \pi^*(-)): H''\!\!-\!\textrm{mod} \to \vect$$ induced by \eqref{sec-rep} is monoidally isomorphic to the forgetful functor. 

Assume in addition that $H''$ is cocommutative. Then \eqref{sec-rep} is an abelian exact sequence of finite tensor categories. 
It follows from Corollary \ref{wgt-ext-rep}  that $H$ is  weakly group-theoretical Hopf algebra if and only if $H'$ is weakly group-theoretical.

Similarly, if $H''$ is commutative, then Corollary \ref{wgt-pt} implies that $H$ is weakly group-theoretical if and only if $H'$ is weakly group-theoretical.
\end{remark}

A \emph{lower subnormal
series} of a Hopf algebra $H$ is
a series of Hopf subalgebras
\begin{equation}\label{lowerseries}k = H_{n}  \subseteq H_{n-1}
\subseteq  \dots
\subseteq H_1 \subseteq H_0 = H, \end{equation} such that $H_{i+1}$ is a
normal Hopf subalgebra of $H_i$, for all $i$. The \emph{factors} of the series
\eqref{lowerseries} are the quotient Hopf algebras $\overline H_i =
H_i/H_iH_{i+1}^+$, $i = 0, \dots, n-1$.

Dually, an \emph{upper
subnormal series} of $H$ is a series of surjective Hopf
algebra maps
\begin{equation}\label{upperseries}H = H_{(0)} \to H_{(1)}
\to \dots \to H_{(n)} = k,
\end{equation} such that $H_{(i+1)}$ is a normal quotient Hopf algebra of
$H_{(i)}$, for all $i = 0, \dots, n-1$. The \emph{factors} of
\eqref{upperseries} are the Hopf algebras $\underline H_i = {}^{\co
H_{(i+1)}}H_{(i)} \subseteq H_{(i)}$, $i = 0, \dots, n-1$.

\medbreak 
The Hopf algebra $H$ is called \emph{lower-semisolvable} (respectively, \emph{upper-semisolva\-ble}) if $H$ admits a lower (respectively, upper) subnormal series whose factors are commutative or cocommutative \cite{MW}. We shall say that $H$ is \emph{semisolvable} if it is either lower or upper semisolvable. Every semisolvable finite dimensional Hopf algebra is semisimple and cosemisimple.

\medbreak
As a consequence of Corollaries \ref{wgt-ext-rep} and \ref{wgt-pt} we obtain:

\begin{corollary}\label{main-hopf} Let $H$ be a semisolvable finite dimensional Hopf algebra. Then $H$ is weakly group-theoretical. 
\end{corollary}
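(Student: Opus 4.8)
The plan is to argue by induction along a subnormal series, using Remark \ref{quotient} as the single inductive step and reducing the upper-semisolvable case to the lower-semisolvable one by duality. First I would record two auxiliary facts. Since $H$ is semisolvable it is semisimple and cosemisimple, and the same then holds for every Hopf subalgebra and every quotient Hopf algebra appearing in a subnormal series; thus all module categories occurring below are fusion categories and weak group-theoreticity is meaningful for each of them. Second, weak group-theoreticity is invariant under duality: because $\comod H \cong \Rep H^*$ as tensor categories, the very definition of weak group-theoreticity yields that $H$ is weakly group-theoretical if and only if $H^*$ is.

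Suppose first that $H$ is lower-semisolvable, with lower subnormal series $k = H_n \subseteq \dots \subseteq H_0 = H$ whose factors $\overline H_i = H_i/H_iH_{i+1}^+$ are commutative or cocommutative. For each $i$ the normality of $H_{i+1}$ in $H_i$ yields a short exact sequence of finite dimensional Hopf algebras
\[
k \toto H_{i+1} \toto H_i \toto \overline H_i \toto k,
\]
whose quotient term $\overline H_i$ is commutative or cocommutative. Hence Remark \ref{quotient} applies and shows that $H_i$ is weakly group-theoretical if and only if $H_{i+1}$ is. Since $H_n = k$ is weakly group-theoretical (its module category is $\vect$), induction moving up the series gives that every $H_i$, and in particular $H_0 = H$, is weakly group-theoretical.

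Now suppose $H$ is upper-semisolvable, with upper subnormal series $H = H_{(0)} \to \dots \to H_{(n)} = k$ and commutative-or-cocommutative factors $\underline H_i = {}^{\co H_{(i+1)}}H_{(i)}$, giving short exact sequences
\[
k \toto \underline H_i \toto H_{(i)} \toto H_{(i+1)} \toto k.
\]
Here the commutative/cocommutative factor is the \emph{kernel} rather than the quotient, so Remark \ref{quotient} does not apply directly; this is the one genuine obstacle. I would resolve it by dualizing, obtaining the short exact sequence
\[
k \toto (H_{(i+1)})^* \toto (H_{(i)})^* \toto (\underline H_i)^* \toto k,
\]
whose quotient term $(\underline H_i)^*$ is cocommutative or commutative. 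Remark \ref{quotient} now gives that $(H_{(i)})^*$ is weakly group-theoretical if and only if $(H_{(i+1)})^*$ is, and the duality invariance recorded above translates this into the statement that $H_{(i)}$ is weakly group-theoretical if and only if $H_{(i+1)}$ is. Since $H_{(n)} = k$ is weakly group-theoretical, induction yields that $H_{(0)} = H$ is weakly group-theoretical. As being semisolvable means being lower- or upper-semisolvable, this covers both cases and completes the proof.
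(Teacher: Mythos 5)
Your proof is correct and takes essentially the same route as the paper: the lower-semisolvable case is handled by induction using Remark \ref{quotient} applied to the short exact sequences coming from the subnormal series, and the upper-semisolvable case is reduced to it by duality via $\comod H \cong H^*\!\!-\!\textrm{mod}$. The only cosmetic difference is that the paper invokes \cite[Corollary 3.3]{MW} to convert the entire upper subnormal series of $H$ into a lower subnormal series of $H^*$ and then applies the first case wholesale, whereas you dualize each short exact sequence individually and transfer weak group-theoreticity back and forth at every step --- the same mechanism, applied step by step.
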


\begin{proof} Suppose first that $H$ is lower semisolvable and let \eqref{lowerseries} be a lower subnormal series of $H$ with commutative or cocommutative composition factors $\overline{H_i}$. Then $H$ fits into an exact sequence $k \toto H_{1} \toto H \toto \overline{H_0} \toto k$, where $H''$ is either commutative or cocommutative. Moreover, $H_{1}$ is also a lower semisolvable Hopf algebra. As pointed out in Remark \ref{quotient}, $H$ is weakly group-theoretical if and only if $H_{1}$ is weakly group-theoretical. The the corollary follows in this case by induction on $\dim H$.

Suppose next that $H$ is upper semisolvable. By \cite[Corollary 3.3]{MW}, $H^*$ is lower semisolvable and therefore it is weakly group-theoretical, by the previous part. Since $H^*\!\!-\!\textrm{mod} \cong \comod H$, then $H$ is weakly group-theoretical as well.
\end{proof}

\bibliographystyle{amsalpha}

\end{document}